\numberwithin{equation}{section}
\theoremstyle{plain}
	\newtheorem{theorem}{Theorem}[section]
	\newtheorem*{lemma*}{Lemma}
	\newtheorem{corollary}[theorem]{Corollary}
\theoremstyle{definition}
	\newtheorem{definition}[theorem]{Definition}
	\newtheorem{remark}[theorem]{Remark}
	\newtheorem{assumption}[theorem]{Assumption}
\newcommand{\de}{\partial}\renewcommand{\div}{\mathrm{div}\,} 
\newcommand{\N}{\mathbb{N}}
\newcommand{\R}{\mathbb{R}}
\newcommand{\T}{\mathbb{T}}
\newcommand{\e}{\mathrm{e}}
\newcommand{\eps}{\varepsilon}
\renewcommand{\d}{\mathsf{d}}
\renewcommand{\phi}{\varphi}
\renewcommand{\rho}{\varrho}
\renewcommand{\theta}{\vartheta}
\DeclareMathOperator{\loc}{loc}
\DeclareMathOperator{\ul}{ul}
\DeclareMathOperator{\di}{\,d\!}
\DeclareMathOperator{\curl}{curl}
\DeclareMathOperator{\supp}{supp}
\DeclarePairedDelimiter{\set}{\{}{\}}
\mathchardef\ordinarycolon\mathcode`\:
\def\Xint#1{\mathchoice
{\XXint\displaystyle\textstyle{#1}}%
{\XXint\textstyle\scriptstyle{#1}}%
{\XXint\scriptstyle\scriptscriptstyle{#1}}%
{\XXint\scriptscriptstyle\scriptscriptstyle{#1}}%
\!\int}
\def\XXint#1#2#3{{\setbox0=\hbox{$#1{#2#3}{\int}$ }
\vcenter{\hbox{$#2#3$ }}\kern-.6\wd0}}
\def\aint{\Xint-}
\begin{document}

\title[Euler flow in localized Yudovich spaces]{An elementary proof of existence and uniqueness \\ for the Euler flow in localized Yudovich spaces}

\author[G.~Crippa]{Gianluca Crippa}
\address[G.~Crippa]{Department Mathematik und Informatik, Universit\"at Basel, Spiegelgasse 1, CH-4051 Basel, Switzerland}
\email{gianluca.crippa@unibas.ch}

\author[G.~Stefani]{Giorgio Stefani}
\address[G.~Stefani]{Department Mathematik und Informatik, Universit\"at Basel, Spiegelgasse 1, CH-4051 Basel, Switzerland}
\email{giorgio.stefani@unibas.ch}

\date{\today}

\keywords{Incompressible inviscid fluid, Euler equations, vorticity equation, Biot--Savart law, weak solution, Lagrangian solution, global existence, Yudovich uniqueness theorem, uniformly-localized Lebesgue space, uniformly-localized Yudovich space, Aubin--Lions Lemma, Osgood condition}

\subjclass[2020]{Primary 76B03. Secondary 35Q35}

\thanks{
\textit{Acknowledgements}. 
This research has been partially supported by the ERC Starting Grant 676675 FLIRT -- \textit{Fluid Flows and Irregular Transport} and by the SNF Project 212573 FLUTURA -- \textit{Fluids, Turbulence, Advection.}
The second author is member of the Istituto Nazionale di Alta Matematica (INdAM), Gruppo Nazionale per l'Analisi, la Probabilità e le loro Applicazioni (GNAMPA), has been partially supported by the INdAM--GNAMPA Project 2020 \textit{Problemi isoperimetrici con anisotropie} (n.\ prot.\ U-UFMBAZ-2020-000798 15-04-2020), is partially supported by the INdAM--GNAMPA 2022 Project \textit{Analisi geometrica in strutture subriemanniane}, codice CUP\_E55\-F22\-000\-270\-001 and by the INdAM--GNAMPA 2023 Project \textit{Problemi variazionali per funzionali e operatori non-locali}, codice CUP\_E53\-C22\-001\-930\-001, and has received funding from the European Research Council (ERC) under the European Union’s Horizon 2020 research and innovation program (grant agreement No.~945655). 
The authors thank M.~Inversi for his careful reading of the manuscript and for several precious comments that helped to improve the exposition.
}

\begin{abstract}
We revisit Yudovich's well-posedness result for the $2$-dimensional Euler equations for an inviscid incompressible fluid on either a sufficiently regular (not necessarily bounded) open set $\Omega\subset\R^2$ or on the torus $\Omega=\T^2$.

We construct global-in-time weak solutions with vorticity in $L^1\cap L^p_{\ul}$ and in $L^1\cap Y^\Theta_{\ul}$, where $L^p_{\ul}$ and $Y^\Theta_{\ul}$ are suitable uniformly-localized versions of the Lebesgue space $L^p$ and of the Yudovich space $Y^\Theta$ respectively, with no condition at infinity for the growth function~$\Theta$.
We also provide an explicit modulus of continuity for the velocity depending on the growth function~$\Theta$. We prove uniqueness of weak solutions in $L^1\cap Y^\Theta_{\ul}$ under the assumption that~$\Theta$ grows moderately at infinity. In contrast to Yudovich's energy method, we employ a Lagrangian strategy to show uniqueness.

Our entire argument relies on elementary real-variable techniques, with no use of either Sobolev spaces, Calder\'on--Zygmund theory or Littlewood--Paley decomposition, and actually applies not only to the Biot--Savart law, but also to more general operators whose kernels obey some natural structural assumptions.
\end{abstract}

\maketitle

\section{Introduction}

\subsection{Euler equations}
The two-dimensional Euler equations for an incompressible inviscid fluid are given by 
\begin{equation}
\label{eq:vEuler}
\begin{cases}
\de_t v+(v\cdot\nabla)v+\nabla p=0
&
\text{in}\ (0,+\infty)\times\Omega,\\
\div v=0	
&
\text{in}\ [0,+\infty)\times\Omega,\\
v\cdot\nu_\Omega=0	
&
\text{on}\ [0,+\infty)\times\de\Omega,\\
v|_{t=0}=v_0
&
\text{on}\ \Omega,\\
\end{cases}
\end{equation}
on either a sufficiently smooth (possibly unbounded) simply connected open set $\Omega\subset\R^2$ or on the $2$-dimensional torus $\Omega=\T^2$, 
where $v\colon[0,+\infty)\times\Omega\to\R^2$ is the \emph{velocity} of the fluid, $p\colon[0,+\infty)\times\Omega\to\R$ is the (scalar) \emph{pressure} and $\nu_\Omega\colon\de\Omega\to\R^2$ is the inner unit normal to $\de\Omega$.
In the cases $\Omega=\R^2$ and $\Omega=\T^2$, no boundary condition is imposed.

The \emph{vorticity} $\omega\colon[0,+\infty)\times\Omega\to\R$ of the fluid is given by the relation $\omega=\curl v$ and satisfies the Euler equations in \emph{vorticity form}
\begin{equation}
\label{eq:Euler}
\begin{cases}
\de_t\omega + \div(v\omega)=0 
& \text{in}\ (0,+\infty)\times\Omega,\\[1mm]
v=K\omega 
& \text{in}\ [0,+\infty)\times\Omega,\\[1mm]
\omega|_{t=0}=\omega_0 & \text{on}\ \Omega.
\end{cases}	
\end{equation}
The relation appearing in the second line of~\eqref{eq:Euler} is the so-called \emph{Biot--Savart law} and allows to recover the velocity~$v$ from the vorticity~$\omega$.
In fact, since $\div v=0$, there exists a \emph{stream function} $\psi\colon[0,+\infty)\times\Omega\to\R$ (uniquely determined up to an additive time-dependent constant, if $\Omega$ is connected) such that
\begin{equation}
\label{eq:stream}
v
=
\begin{pmatrix}
-\de_{x_2}\psi\\
\de_{x_1}\psi
\end{pmatrix}
=
\nabla^\perp\psi
\quad
\text{on}\
[0,+\infty)\times\Omega.
\end{equation}
By applying the $\curl$ operator to both sides of~\eqref{eq:stream}, we get the Poisson equation
\begin{equation}
\label{eq:poisson}
\Delta\psi = \omega
\quad
\text{on}\ \Omega,
\end{equation}
so that 
\begin{equation*}
v(t,x)
=
\int_\Omega k(x,y)\,\omega(t,y)\di y
=
K\omega(t,x)
\end{equation*} 
for $x\in\Omega$ and $t\in[0,+\infty)$, where $k\colon\Omega\times\Omega\to\R^2$ is an integral kernel obtained by composing the operator $\nabla^\perp$ with the Newtonian potential on~$\Omega$. Note that the relation $v=K\omega$ encodes both the incompressibility property of the fluid $\div v=0$ and the \emph{no-flow boundary condition} $v\cdot\nu_\Omega=0$, since one imposes a Dirichlet condition at the boundary of~$\Omega$ in order to solve the Poisson equation~\eqref{eq:poisson}. 
Also note that, in the case of the $2$-dimensional torus, \eqref{eq:poisson} is only solvable under the compatibility condition that $\omega$ has zero average on~$\T^2$.
At least formally, such condition follows from the definition of $\omega$ as the $\curl$ of the velocity field. In case the set~$\Omega$ is not simply connected, the Biot--Savart law needs to be modified taking into account the circulations of the velocity field around the ``holes'' of $\Omega$, which requires the use of suitable harmonic vector fields, see~\cite{ILL20} for instance.

If $\Omega=\R^2$, then actually $k(x,y)=k_2(x-y)$ with
\begin{equation*}
k_2(x)
=
\frac1{2\pi}\,\frac{1}{|x|^2}
\begin{pmatrix}
-x_2\\
x_1	
\end{pmatrix}
=
\frac1{2\pi}\,\frac{x^\perp}{|x|^2}
\quad
\text{for all}\ x\in\R^2,\ x\ne0.
\end{equation*}
On a sufficiently regular open set $\Omega\subset\R^2$ and on the torus $\Omega=\T^2$, the kernel $k$ does not have such an easy and explicit expression but, nevertheless, is known to satisfy some suitable a priori estimates (see~\cites{MP84,MP94} and also inequalities~\eqref{eq:k_kernel_bound} and~\eqref{eq:k_kernel_oscillation_bound} below).

For a detailed exposition of the theory of the Euler equations,
we refer the reader to the monographs~\cites{BCD11,C95,MB02,MP94} and to the survey~\cite{C07}. 

Existence and uniqueness of \emph{weak solutions} of~\eqref{eq:Euler} with bounded vorticity is due to Yudovich~\cite{Y63}.
Existence of weak solutions was later achieved even for unbounded vorticities under weaker integrability assumptions, see~\cites{D91,DM87,EM94,M93,VW93} for the most relevant results.

Uniqueness of unbounded weak solutions of~\eqref{eq:Euler} is an extremely delicate problem. On the one side, in~\cite{Y95} Yudovich himself extended his previous uniqueness result~\cite{Y63} to the case of unbounded vorticities belonging to the now-called \emph{Yudovich space}, see below for the precise definition.
A different approach relying on Littlewood--Paley decomposition techniques was pursued by Vishik~\cite{V99}.
Further improvements were subsequently obtained by several authors~\cites{BH15,BK14,CMZ19,HK08}, additionally establishing some propagation of regularity of solutions under more restrictive assumptions on the initial data.   
Important results have been also achieved on open sets with rough boundary, see~\cites{HZ20,L15,LMW14,LZ19}.
On the other side, the uniqueness of weak solutions of~\eqref{eq:Euler} in~$L^p(\R^2)$ for $p<+\infty$ is currently an open problem, see~\cites{BM20,BS21,V18-I,V18-II,BC21} for some recent advances.

\subsection{Yudovich's energy method}

In this paper, we revisit Yudovich's well-posedness result in~\cite{Y95}.
Our approach is simpler and explicit, and is based on elementary real-variable techniques only.
In fact, we make no use of either Fourier theory or Littlewood--Paley decomposition and even, somewhat surprisingly, we do not need to rely on either Sobolev spaces or Calder\'on--Zygmund operator theory.

Yudovich's original approach~\cites{Y63,Y95} to the uniqueness is essentially based on a clever {\em energy argument} (we refer the reader also to~\cite{C95}*{Chapter~5}, \cite{MB02}*{Chapter~8} and~\cite{MP94}*{Chapter~2} for a more detailed exposition).
The idea behind this method is to show that the squared $L^2$ distance between the velocities of two solutions (also called the {\em relative energy}) 
\begin{equation*}
E(t)
=
\int_\Omega|v(t,x)-\tilde v(t,x)|^2\di x
\end{equation*}
starting from the same initial datum obeys a Gr\"onwall-type integral inequality.

If the vorticity is bounded, then one can exploit the Biot--Savart law $v=K\omega$ in~\eqref{eq:Euler} together with some standard Calder\'on--Zygmund estimates to get
\begin{equation}
\label{eq:yudo_v_bounded}
\|\nabla v\|_{L^p(\Omega)}
\le
Cp
\end{equation}
for any $p\in(1,+\infty)$ sufficiently large, where the constant $C>0$ depends on $\|\omega\|_{L^\infty(\Omega)}$ only. 
An energy estimate on~\eqref{eq:vEuler} combined with~\eqref{eq:yudo_v_bounded} gives
\begin{equation}
\label{eq:yudo_E_bounded}
\frac{\di}{\di t}\,E(t)
\le
Cp\,E(t)^{1-\sfrac1p}
\quad
\text{for}\ t\in[0,T].
\end{equation}
By comparison with the maximal solution of~\eqref{eq:yudo_E_bounded}, one must have that 
\begin{equation*}
E(t)
\le 
(Ct)^p
\le 
(CT)^p
\quad
\text{for}\ t\in[0,T],
\end{equation*} 
so that $E(t)=0$ for all $t\in[0,T]$ letting $p\to+\infty$, provided that $CT<1$.

If the vorticity is not bounded but the function $p\mapsto\|\omega\|_{L^p(\Omega)}$ has moderate growth for $p\to+\infty$, then the argument above can be modified to get 
an estimate of the form
\begin{equation}
\label{eq:yudo_E_Y}
E(t)
\lesssim
\int_0^t\beta(E(s))\di s,
\end{equation}
for some function $\beta\colon[0,+\infty)\to[0,+\infty)$ depending on the growth of the $L^p$ norm of the vorticity for $p\to+\infty$, namely, to which \emph{Yudovich space} the vorticity belongs to.

Let us recall the  definition of \emph{Yudovich space}.
Here and in the rest of the paper, we let $\Theta\colon [1,+\infty)\to(0,+\infty)$ be a non-decreasing function. 

\begin{definition}[Yudovich space] 
We let
\begin{equation*}
Y^\Theta(\Omega)
=
\set*{f\in\bigcap_{p\in[1,+\infty)} L^p(\Omega) \; : \;  
\|f\|_{Y^\Theta(\Omega)}
=
\sup_{p\in[1,+\infty)}\frac{\|f\|_{L^p(\Omega)}}{\Theta(p)}<+\infty}
\end{equation*}
be the \emph{Yudovich space} on~$\Omega$ associated to~$\Theta$.	
\end{definition}

Note that, if $\Theta$ is bounded, then $Y^\Theta(\Omega)\subset L^\infty(\Omega)$.
If $\Theta$ is unbounded, then it is not difficult to see that $Y^\Theta(\Omega)\not\subset L^\infty(\Omega)$.

Now, if the vorticity belongs to $Y^\Theta(\Omega)$, then one replaces~\eqref{eq:yudo_v_bounded} with
\begin{equation*}
\|\nabla v\|_{L^p(\Omega)}
\lesssim
\|\omega\|_{Y^\Theta(\Omega)}\,p\,\Theta(p)
\end{equation*}
and the computation leading to~\eqref{eq:yudo_E_bounded} now gives
\begin{equation*}
\frac{\di}{\di t}E(t) \lesssim \frac{1}{\eps} \|\omega\|_{L^{\sfrac{1}{\eps}}(\Omega)} E(t)^{1-\eps} \lesssim E(t) \frac{1}{\eps} \Theta(\sfrac{1}{\eps}) \frac{1}{E(t)^\eps}
\end{equation*}
for $\eps>0$, where the implicit constant depends on the $Y^\Theta$ norm of the vorticity. Setting
\begin{equation}
\label{eq:yudo_inf_function}
\psi_\Theta(r)
=
\begin{cases}
\inf\set*{\frac{1}{\eps}\, \Theta\left(\sfrac1\eps\right) \;:\; \eps\in\left(0,\sfrac1{3}\right)}
&
\text{for}\
r\in[0,1),
\\[3mm]
\inf\set*{\frac{1}{\eps}\,\Theta\left(\sfrac1\eps\right)\,r^\eps \;:\; \eps\in\left(0,\sfrac1{3}\right)}
&
\text{for}\
r\in[1,+\infty)
\end{cases}
\end{equation} 
(here the choice of the value $\sfrac13$ is irrelevant and is made for convenience only), we finally obtain
\begin{equation*}
\frac{\di}{\di t}E(t) \lesssim E(t) \, \psi_\Theta\left(\frac{1}{E(t)} \right),
\end{equation*}
where the implicit constant depends on the $Y^\Theta$ norm of the vorticity. We have therefore obtained~\eqref{eq:yudo_E_Y} with $\beta(z)=z \, \psi_\Theta(\sfrac{1}{z})$. Based on this computation, Yudovich's well-posedness result can be stated as follows (for the precise notion of \emph{weak solution} of~\eqref{eq:Euler}, see \cref{def:weak_solution} below).

\begin{theorem}[Yudovich~\cites{Y63,Y95}]
\label{res:Yudovich}
Let $\Omega\subset\R^2$ be a bounded open set with $C^2$ boundary and assume that the function $\psi_\Theta$ in~\eqref{eq:yudo_inf_function} satisfies
\begin{equation}
\label{eq:yudo_Osgood}
\int_{0^+}
\frac{\di r}{r\,\psi_\Theta(\sfrac{1}{r})}
=
+\infty.
\end{equation}
Then, for any initial datum $\omega_0\in  Y^\Theta(\Omega)$, there exists a unique weak solution $(\omega,v)$ of~\eqref{eq:Euler} such that
\begin{equation}
\label{eq:res_Yudovich}
\omega\in 
L^\infty([0,+\infty); Y^\Theta(\Omega)),
\quad
v\in
L^\infty([0,+\infty);C_b(\Omega;\R^2)).
\end{equation}
\end{theorem}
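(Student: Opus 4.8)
The plan is to treat existence and uniqueness separately. For existence I would use a standard approximation-and-compactness scheme, exploiting the transport structure of the vorticity equation~\eqref{eq:Euler} to obtain uniform bounds; for uniqueness I would carry out the energy argument already sketched before the statement, whose conclusion is governed by the Osgood condition~\eqref{eq:yudo_Osgood}.

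For existence, I would first regularize the initial datum, mollifying $\omega_0$ to obtain smooth, compactly supported data $\omega_0^\eps$ with $\|\omega_0^\eps\|_{Y^\Theta(\Omega)}\le\|\omega_0\|_{Y^\Theta(\Omega)}$ up to a harmless constant, and solve~\eqref{eq:Euler} to produce smooth solutions $(\omega^\eps,v^\eps)$ on $[0,+\infty)$. The crucial observation is that, since $v^\eps=K\omega^\eps$ is divergence-free and satisfies the no-flow boundary condition, the associated Lagrangian flow maps $\Omega$ into itself and is measure-preserving, so that every $L^p(\Omega)$ norm of $\omega^\eps(t,\cdot)$ is conserved in time; consequently
\[
\|\omega^\eps(t,\cdot)\|_{Y^\Theta(\Omega)}=\|\omega_0^\eps\|_{Y^\Theta(\Omega)}\le\|\omega_0\|_{Y^\Theta(\Omega)}
\quad\text{for all } t\in[0,+\infty),
\]
a bound uniform in $(t,\eps)$, and in particular uniform in $L^1\cap L^p(\Omega)$ for every $p$. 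Feeding this into the Biot--Savart law, the kernel estimates~\eqref{eq:k_kernel_bound} and~\eqref{eq:k_kernel_oscillation_bound} yield that each $v^\eps$ is bounded with an explicit spatial modulus of continuity depending only on $\Theta$ and on $\|\omega_0\|_{Y^\Theta(\Omega)}$; hence $\{v^\eps\}$ is uniformly bounded and equicontinuous, uniformly in time. These bounds, together with the time-regularity of $\omega^\eps$ coming from $\de_t\omega^\eps=-\div(v^\eps\omega^\eps)$, put me in position to apply an Aubin--Lions-type compactness lemma and extract a subsequence with $\omega^\eps\to\omega$ weakly-$*$ and $v^\eps\to v$ strongly in a local topology. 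Weak-$*$ lower semicontinuity preserves the $Y^\Theta$ bound, and the equicontinuity passes to the limit, so that $\omega\in L^\infty([0,+\infty);Y^\Theta(\Omega))$ and $v\in L^\infty([0,+\infty);C_b(\Omega;\R^2))$ as in~\eqref{eq:res_Yudovich}; the strong convergence of the velocities is what lets me pass to the limit in $v^\eps\omega^\eps\to v\omega$, so that $(\omega,v)$ is a weak solution in the sense of \cref{def:weak_solution}.

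For uniqueness, let $(\omega,v)$ and $(\tilde\omega,\tilde v)$ be two weak solutions as in~\eqref{eq:res_Yudovich} with the same initial datum, and set $E(t)=\int_\Omega|v-\tilde v|^2\di x$. Using incompressibility of both fields and the no-flow boundary condition, the transport and pressure terms arising from~\eqref{eq:vEuler} cancel after integration by parts, leaving the relative-energy identity
\[
\frac{1}{2}\frac{\di}{\di t}E(t)=-\int_\Omega\bigl((v-\tilde v)\cdot\nabla\bigr)\tilde v\cdot(v-\tilde v)\di x,
\]
whence $|\tfrac{\di}{\di t}E(t)|\le 2\int_\Omega|\nabla\tilde v|\,|v-\tilde v|^2\di x$. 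Applying Hölder with exponents $p$ and $p'$, interpolating $\|v-\tilde v\|_{L^{2p'}}$ between the conserved $E(t)^{1/2}=\|v-\tilde v\|_{L^2}$ and the uniform bound $\|v-\tilde v\|_{L^\infty}\lesssim\|\omega_0\|_{Y^\Theta(\Omega)}$, and using $\|\nabla\tilde v\|_{L^p(\Omega)}\lesssim\|\omega_0\|_{Y^\Theta(\Omega)}\,p\,\Theta(p)$, then optimizing over $p$ (equivalently over $\eps=\sfrac1p$) exactly as in the computation preceding the statement, I obtain
\[
\frac{\di}{\di t}E(t)\lesssim E(t)\,\psi_\Theta\!\left(\frac{1}{E(t)}\right)=\beta(E(t)),
\qquad \beta(z)=z\,\psi_\Theta(\sfrac1z),
\]
with $\psi_\Theta$ as in~\eqref{eq:yudo_inf_function}. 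Since $E(0)=0$ and the Osgood condition~\eqref{eq:yudo_Osgood} is precisely $\int_{0^+}\frac{\di r}{\beta(r)}=+\infty$, Osgood's lemma forces $E(t)\equiv0$, hence $v=\tilde v$ and therefore $\omega=\curl v=\curl\tilde v=\tilde\omega$.

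The main obstacle lies in the uniqueness part, at two points. First, the relative-energy identity must be justified at the regularity of weak solutions rather than for smooth fields: this requires $v$ and $\tilde v$ to be regular enough (bounded with the $\Theta$-modulus of continuity, in particular quasi-Lipschitz when $\Theta$ grows moderately) to legitimize the integration by parts and the cancellation of the pressure and transport terms, which I would make rigorous by a mollification argument. Second, and most delicate, is the interpolation-and-optimization step producing $\psi_\Theta$: one must track the constants carefully so that the resulting $\beta$ matches~\eqref{eq:yudo_inf_function} and the divergence in~\eqref{eq:yudo_Osgood} can be invoked, which is exactly where the moderate-growth hypothesis on $\Theta$ is used. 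On the existence side, the analogous difficulty is securing \emph{strong} (rather than merely weak) convergence of the velocities, without which the nonlinear term cannot be passed to the limit; this is where the uniform modulus of continuity, combined with Aubin--Lions, is indispensable.
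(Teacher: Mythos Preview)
Your proposal is essentially correct and follows Yudovich's original energy method, which is precisely the approach the paper \emph{sketches} in Section~1.2 before stating \cref{res:Yudovich}. Note, however, that \cref{res:Yudovich} is presented as a cited classical result; the paper does not give its own self-contained proof of it in this form. Rather, the paper's contribution is an \emph{alternative} route to (a generalization of) this theorem via \cref{res:main_existence} and \cref{res:main_uniqueness}, and that route is genuinely different from yours.

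For uniqueness, you run the relative-energy argument on $E(t)=\|v-\tilde v\|_{L^2}^2$, using the Calder\'on--Zygmund bound $\|\nabla\tilde v\|_{L^p}\lesssim p\,\Theta(p)$ and optimizing in~$p$ to land on $\psi_\Theta$ and the Osgood condition~\eqref{eq:yudo_Osgood}. The paper instead works at the Lagrangian level (Section~4): it compares the two flows $X,\tilde X$ in an $L^1$ distance weighted by $|\omega_0|$, uses only the kernel estimates of \cref{ass:est} (no Sobolev spaces, no Calder\'on--Zygmund) to obtain the explicit modulus $\phi_\Theta$ of~\eqref{eq:def_phi_Theta}, and closes via Jensen and Osgood. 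What your approach buys is that it stays entirely Eulerian and does not need the solutions to be Lagrangian a priori; what the paper's approach buys is that it avoids the Calder\'on--Zygmund input, gives an explicit modulus of continuity (rather than the implicit $\psi_\Theta$), and applies to any kernel satisfying \cref{ass:est}--\cref{ass:diverg_non-tang}, not just Biot--Savart.

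For existence, your mollify-and-pass-to-the-limit scheme is standard and correct for Biot--Savart on a bounded $C^2$ domain, where smooth solution theory is available. The paper, aiming at general kernels for which no smooth theory exists, replaces this by a time-stepping construction (\cref{res:existence_bounded}) before the same Aubin--Lions compactness; on a bounded domain with the genuine Biot--Savart law the two schemes are interchangeable.
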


In~\cite{Y95}*{Theorem~2}, Yudovich also proved that the velocity $v$ in~\eqref{eq:res_Yudovich} satisfies 
\begin{equation}
\label{eq:res_Yudovich_psi_Theta}
|v(t,x)-v(t,y)|
\lesssim
|x-y|\,\psi_\Theta(\sfrac{1}{|x-y|^3})
\quad
\forall x,y\in\Omega 
\end{equation} 
for a.e.~$t>0$, and observed that the modulus of continuity $r\mapsto r\,\psi_\Theta(\sfrac{1}{r^3})$ satisfies the \emph{Osgood condition}
\begin{equation*}
\int_{0^+}\frac{\di r}{r\,\psi_\Theta(\sfrac{1}{r^3})}=+\infty,
\end{equation*}
as a consequence of~\eqref{eq:yudo_Osgood}

As it is apparent from the definition in~\eqref{eq:yudo_inf_function}, the precise behavior of~$\psi_\Theta$ and its dependence on the growth function~$\Theta$ are quite implicit.
As a matter of fact, in his paper~\cite{Y95} Yudovich exhibited explicit formulas for the function~$\psi_\Theta$ only in some particular cases.
Precisely, if 
\begin{equation}
\label{eq:yudo_Theta_example}
\Theta_m(p)
=
\log p\,
\log_2 p\,
\log_3 p\,
\cdots\,
\log_m p
\end{equation}
for some $m\in\N$ and for all $p\in(1,+\infty)$ sufficiently large, where
\begin{equation*}
\log_m p
=
\underbrace{\log\log\dots\log}_{\text{$m$ times}} p ,
\end{equation*}
then
\begin{equation*}
\psi_{\Theta_m}(r)
\lesssim
\log r\,
\log_2 r\,
\log_3 r\,
\cdots\,
\log_{m+1} r
\end{equation*}
for $r>0$ sufficiently large.
In this range of examples, condition~\eqref{eq:yudo_Osgood} holds for all $m\in\N$.
Condition~\eqref{eq:yudo_Osgood} however fails for a growth function of order $\Theta(p)\eqsim p$ for $p\to+\infty$.
In other words, as observed in~\cite{Y95}*{Examples~3.2 and~3.3}, \cref{res:Yudovich} holds for vorticities with singularities of order $|\log|\log|x||$ (corresponding to a growth function of order $\Theta(p)\eqsim\log p$), but not for vorticities with singularities of order $|\log|x||$ (corresponding to a growth function of order $\Theta(p)\eqsim p$) which, in turn, are typical singularities of $\mathrm{BMO}$ functions, see the discussion in~\cite{V99} and the estimate~\eqref{eq:bmo} below. 

\subsection{Uniformly-localized \texorpdfstring{$L^p$}{Lˆp} and Yudovich spaces}

As recently pointed out by the work of Taniuchi~\cite{T04} and by the subsequent developments obtained in~\cites{TTY10,CMZ19}, Yudovich's approach can be suitably localized in order to treat vorticities with possibly infinite global $L^1$ norm.

Let us recall the definition of the \emph{uniformly-localized} version of the Yudovich space introduced above.
Here and in the rest of the paper, we let $\d\colon\Omega\times\Omega\to[0,+\infty)$ be the natural distance on~$\Omega$, that is, the Euclidean distance if $\Omega\subset\mathbb\R^2$ and the geodesic distance if $\Omega=\T^2$. 
We let $B_r(x)$ be the open ball of radius $r>0$ centered at $x\in\R^2$.

\begin{definition}[Uniformly-localized $L^p$ and Yudovich spaces]
Let $p\in[1,+\infty)$.
We let 
\begin{equation}
\label{eq:def_Lp_ul}
L^p_{\ul}(\Omega)
=
\set*{
f\in L^p_{\loc}(\Omega) \;:\; 
\|f\|_{L^p_{\ul}(\Omega)}
=
\sup_{x\in\Omega}\|f\|_{L^p(\Omega\cap B_1(x))}
<+\infty}
\end{equation}
be the \emph{uniformly-localized $L^p$ space} on~$\Omega$.
By convention, we set $L^\infty_{\ul}(\Omega)=L^\infty(\Omega)$.
We also let
\begin{equation*}
Y^\Theta_{\ul}(\Omega)
=
\set*{f\in\bigcap_{p\in[1,+\infty)} L^p_{\ul}(\Omega) \;:\; 
\|f\|_{Y^\Theta_{\ul}(\Omega)}
=
\sup_{p\in[1,+\infty)}
\frac
{\|f\|_{L^p_{\ul}(\Omega)}}
{\Theta(p)}<+\infty}
\end{equation*}
be the \emph{uniformly-localized Yudovich space} on~$\Omega$ associated to~$\Theta$.	
\end{definition}

Clearly, we have $Y^\Theta(\Omega)\subset Y^\Theta_{\ul}(\Omega)$, with strict inclusion if $\Omega$ is unbounded.
Note that, by an elementary geometric argument, if we set
\begin{equation*}
\|f\|_{L^p_{\ul,r}(\Omega)}
=
\sup_{x\in\Omega}\|f\|_{L^p(\Omega\cap B_r(x))}
\end{equation*}
for all $r>0$, then
\begin{equation}
\label{eq:Lp_ul_change_radius}
\|f\|_{L^p_{\ul,R}(\Omega)}
\lesssim
\left(\frac Rr\right)^{2/p}
\|f\|_{L^p_{\ul,r}(\Omega)}
\end{equation}
for all $p\in[1,+\infty)$ and $R>r>0$.
In particular, the choice $r=1$ made in the definition~\eqref{eq:def_Lp_ul} of the space $L^p_{\ul}(\Omega)$ is completely irrelevant.  

With these definitions at hand, Taniuchi's well-posedness result can be stated as follows (see~\cite{TTY10} for a similar result dealing with almost-periodic initial data in~$\R^2$ and~\cite{CMZ19}*{Theorem~1.10} for initial data additionally belonging to a suitable \emph{Spanne space}). 

\begin{theorem}[Taniuchi~\cites{T04,CMZ19}]
\label{res:Taniuchi}
Let $\Theta\colon [1,+\infty)\to(0,+\infty)$ be a non-decreasing function such that
\begin{equation}
\label{eq:res_Taniuchi_Theta}
\int^{+\infty}\frac{\di p}{p\,\Theta(\log p)}=+\infty.
\end{equation}
Then, for any initial datum $\omega_0\in Y^\Theta_{\ul}(\R^2)$, there exists a weak solution $(\omega,v)$ of~\eqref{eq:Euler} such that
\begin{equation}
\label{eq:res_Taniuchi}
\omega\in 
L^\infty_{\loc}([0,+\infty),Y^\Theta_{\ul}(\R^2)),
\quad
v\in
L^\infty_{\loc}([0,+\infty);L^\infty_{\loc}(\R^2;\R^2)).
\end{equation}
In addition, if $\Theta$ satisfies
\begin{equation}
\label{eq:Yudovich_condition_Theta}
\int^{+\infty}\frac{\di p}{p\,\Theta(p)}=+\infty,
\end{equation}
then the solution $(\omega,v)$ in~\eqref{eq:res_Taniuchi} is unique.
\end{theorem}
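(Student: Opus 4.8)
My plan is to treat existence and uniqueness by two independent arguments: existence through a vanishing-regularization scheme with a compactness extraction, and uniqueness through a Lagrangian stability estimate closed by an Osgood-type argument. For existence I would first mollify the datum to obtain $\omega_0^\eps\in C_c^\infty(\R^2)$ with $\|\omega_0^\eps\|_{Y^\Theta_{\ul}(\R^2)}$ bounded uniformly in $\eps$. For such smooth, decaying data the Euler system has a smooth global solution $(\omega^\eps,v^\eps)$, which I would read in Lagrangian form as $\omega^\eps(t,\cdot)=\omega_0^\eps\circ(X^\eps_t)^{-1}$, where $X^\eps$ is the measure-preserving flow of $v^\eps=K\omega^\eps$. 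The decisive point is a uniform-in-$\eps$ bound $\omega^\eps\in L^\infty_{\loc}([0,+\infty);Y^\Theta_{\ul}(\R^2))$. Since the flow preserves Lebesgue measure, the global $L^p$ norm is conserved; the uniformly-localized norm may only grow, and by an amount dictated by how far the flow can transport mass. Combining the kernel bound~\eqref{eq:k_kernel_bound} with the uniformly-localized $L^p$ estimates, and exploiting the cancellation carried by the vectorial kernel rather than crude absolute values, yields a velocity whose growth at infinity is controlled by~$\Theta$ and a differential inequality of the form $\dot R\lesssim R\,\Theta(\log R)$ for the spreading radius $R(t)$. Condition~\eqref{eq:res_Taniuchi_Theta} is exactly the requirement that this inequality have no finite-time blow-up, so that $R(t)$, and with it the $Y^\Theta_{\ul}$ norm, remains finite on every bounded time interval.

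\textbf{Compactness and limit.} With these bounds the velocities $v^\eps$ are locally uniformly bounded and equicontinuous, with an explicit $\Theta$-dependent modulus of continuity coming from the oscillation estimate~\eqref{eq:k_kernel_oscillation_bound}, so a subsequence converges locally uniformly to some $v$ by the Arzel\`a--Ascoli theorem. At the same time $\omega^\eps$ is bounded in $L^\infty_{\loc}([0,+\infty);L^p_{\ul}(\R^2))$ while $\de_t\omega^\eps=-\div(v^\eps\omega^\eps)$ is bounded in a negative local norm, so the Aubin--Lions lemma yields strong precompactness of $\omega^\eps$ in $L^p_{\loc}$. The product $v^\eps\omega^\eps$ then converges through the strong--weak pairing, and passing to the limit in the weak formulation of~\eqref{eq:Euler} and in the Biot--Savart law produces a weak solution with the regularity~\eqref{eq:res_Taniuchi}.

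\textbf{Uniqueness.} Given two solutions $(\omega_1,v_1)$ and $(\omega_2,v_2)$ with the same datum and the regularity~\eqref{eq:res_Taniuchi}, I would compare their Lagrangian flows $X_1,X_2$ and monitor a distance functional such as $Q(t)=\sup_{x}\d(X_1(t,x),X_2(t,x))$, localized as needed. Differentiating and using $\dot X_i=v_i(t,X_i)$, I would split
\[
\abs{v_1(t,X_1)-v_2(t,X_2)}\le\abs{v_1(t,X_1)-v_2(t,X_1)}+\abs{v_2(t,X_1)-v_2(t,X_2)}.
\]
The second summand is bounded by the modulus of continuity of $v_2$ evaluated at $Q$; the first, which equals $\abs{K(\omega_1-\omega_2)(t,X_1)}$, I would control by a quantitative stability estimate for the Biot--Savart operator, using $\omega_i=\omega_0\circ X_i^{-1}$ to rewrite the difference as an integral of kernel differences against the common datum. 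This closes a differential inequality $\dot Q\lesssim\mu(Q)$ whose modulus $\mu$ is built from~$\Theta$, and condition~\eqref{eq:Yudovich_condition_Theta} is precisely what guarantees the Osgood property $\int_{0^+}\frac{\di r}{\mu(r)}=+\infty$. Osgood's uniqueness theorem then forces $Q\equiv0$, hence $X_1=X_2$ and therefore $\omega_1=\omega_2$ and $v_1=v_2$.

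\textbf{Main obstacle.} The core difficulty is the uniformly-localized setting itself. For $\omega\in Y^\Theta_{\ul}(\R^2)$ the Biot--Savart integral need not converge at infinity, so the velocity is defined only up to a normalization and one cannot rely on the global integrability exploited by the classical energy method. The crucial and most delicate step is thus the uniform a priori control of the $Y^\Theta_{\ul}$ norm on finite time intervals, where condition~\eqref{eq:res_Taniuchi_Theta} is consumed; for uniqueness, the analogous difficulty is the quantitative stability of $v_1-v_2=K(\omega_1-\omega_2)$ against the flow distance $Q$ required to close the Osgood inequality.
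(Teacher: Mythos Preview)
The paper does not prove this theorem: \cref{res:Taniuchi} is stated as background, attributed to Taniuchi~\cite{T04} and~\cite{CMZ19}, and serves to motivate the paper's own results (\cref{res:main_uniqueness} and \cref{res:main_existence}), which work in the strictly smaller class $L^1\cap Y^\Theta_{\ul}$. So there is no proof here against which to compare your proposal.

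That said, your outline is close in spirit to what the paper does for its own theorems, and two concrete points deserve comment. First, in your compactness step you claim that Aubin--Lions gives \emph{strong} precompactness of $\omega^\eps$ in $L^p_{\loc}$. This does not follow: there is no compact embedding available for the vorticity itself. What one actually obtains---and what the paper's Aubin--Lions-type lemma (\cref{res:aubin-lions}) delivers---is only \emph{weak} compactness of $\omega^\eps$; the nonlinear term $v^\eps\omega^\eps$ is then handled by pairing weak convergence of $\omega^\eps$ with the strong (locally uniform) convergence of $v^\eps$ coming from Arzel\`a--Ascoli. Second, in your uniqueness argument the functional $Q(t)=\sup_x\d(X_1(t,x),X_2(t,x))$ is unlikely to close: bounding $|K(\omega_1-\omega_2)(t,X_1(t,x))|$ pointwise in~$x$ by a function of $Q$ alone requires a uniform kernel estimate that is not available. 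The paper's own Lagrangian uniqueness proof (\cref{res:uniqueness}) instead tracks the \emph{integrated} distance $\int_\Omega\d(X(t,x),\tilde X(t,x))\di\mu(x)$ against a finite measure~$\mu$, uses Fubini to exchange the $x$- and $y$-integrals, and closes via Jensen's inequality for the concave modulus~$\phi_\Theta$; a supremum does not permit this exchange.

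You correctly identify the main obstacle specific to the pure $Y^\Theta_{\ul}$ setting: without global $L^1$ control the Biot--Savart integral does not converge absolutely at infinity and must be renormalized, which is precisely where Taniuchi's argument in~\cite{T04} becomes substantially more delicate than the $L^1\cap Y^\Theta_{\ul}$ case treated in this paper. Your sketch does not say how this renormalization is chosen or why it remains compatible with the Lagrangian representation in the uniqueness step.
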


Note that condition~\eqref{eq:res_Taniuchi_Theta} is satisfied by a growth function $\Theta(p)\eqsim p$ for \mbox{$p\to+\infty$}. 
In particular, since
\begin{equation}
\label{eq:bmo}	
\|f\|_{L^p_{\ul}(\R^2)}
\lesssim
p \, \|f\|_{\mathrm{bmo}(\R^2)}
\end{equation}
for all $p\in(1,+\infty)$ (see~\cite{T04}*{Definitions~3 and 5}), \cref{res:Taniuchi} provides existence of weak solutions of~\eqref{eq:Euler} starting from a $\mathrm{BMO}$ initial vorticity and bounded initial velocity (although in general the solution does not belong to $\mathrm{BMO}$ at later times), improving the previous existence result by Vishik~\cite{V99}.
We refer the reader to~\cite{T04}*{Corollary~1.2} for the precise (and more general) statement of this result.

\subsection{Main results}
In this paper, we first of all completely revisit Yudovich's uniqueness result (\cref{res:Yudovich}), employing an elementary and direct approach
which makes the relation among the growth of the $L^p$ norm of the vorticity, the modulus of continuity of the associated velocity, and the condition required for the uniqueness fully explicit.

Before stating our uniqueness result, let us introduce some notation that will be used throughout the paper.

\begin{definition}[The function $\phi_\Theta$]
Let $\Theta\colon [1,+\infty)\to(0,+\infty)$ be a non-decreasing function. 
We let the function $\phi_\Theta\colon[0,+\infty)\to[0,+\infty)$ be such that $\phi_\Theta(0)=0$ and
\begin{equation}
\label{eq:def_phi_Theta}
\phi_\Theta(r)
=
\begin{cases}
r\,(1-\log r)\,\Theta(1-\log r)
&
\text{for}\ r\in(0,\e^{-2}]\\[2mm]
\e^{-2}\,3\,\Theta(3) 
& 
\text{for}\ r>\e^{-2}
\end{cases}
\end{equation} 
(the choice of the constant $\e^{-2}$ is irrelevant and is made for convenience only, see below).
With a slight abuse of terminology, we say that $\phi_\Theta$ is the \emph{modulus of continuity associated to the growth function~$\Theta$}, and we define  
\begin{equation*}
C_b^{0,\phi_\Theta}(\Omega;\mathbb R^2)
=
\set*{v\in L^\infty(\Omega;\R^2)
\;:\;
\sup_{x \not= y}
\frac{|v(x)-v(y)|}{\phi_\Theta(\d(x,y))}
<+\infty
}.	
\end{equation*}
\end{definition}

With the above notation in force, our uniqueness result can be stated as follows (for the precise notions of \emph{weak solution} and of \emph{Lagrangian weak solution} of the system~\eqref{eq:Euler}, we again refer the reader to \cref{def:weak_solution} below).

\begin{theorem}[Uniqueness]
\label{res:main_uniqueness}
Let $\Omega\subset\R^2$ be either a sufficiently regular open set or the $2$-dimensional torus $\Omega=\T^2$.
If $\Theta$ satisfies~\eqref{eq:Yudovich_condition_Theta} and the function~$\phi_\Theta$ defined in~\eqref{eq:def_phi_Theta} 
is concave on $[0,+\infty)$, then there is at most one Lagrangian weak solution $(\omega,v)$ of~\eqref{eq:Euler} with
\begin{equation}
\label{eq:res_main_solution_Theta}
\omega\in 
L^\infty_{\loc}([0,+\infty);L^1(\Omega)\cap Y^\Theta_{\ul}(\Omega)),
\quad
v\in
L^\infty_{\loc}([0,+\infty);C_b^{0,\phi_\Theta}(\Omega;\R^2)),
\end{equation}
starting from the initial datum $\omega_0\in L^1(\Omega)\cap Y^\Theta_{\ul}(\Omega)$.	
\end{theorem}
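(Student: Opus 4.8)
The plan is to argue entirely at the Lagrangian level. To each of two solutions $(\omega_i,v_i)$, $i=1,2$, issued from the same datum $\omega_0$, the notion of Lagrangian weak solution (\cref{def:weak_solution}) associates a flow map $X_i$ solving $\frac{\di}{\di t}X_i(t,x)=v_i(t,X_i(t,x))$ with $X_i(0,x)=x$, and expresses the vorticity as the pushforward $\omega_i(t,\cdot)=X_i(t,\cdot)_\#\omega_0$. Since each $v_i$ is bounded and divergence-free, the flows are well defined and measure-preserving, so I would reduce the statement to proving that $X_1(t,\cdot)=X_2(t,\cdot)$ for $\abs*{\omega_0}$-a.e.\ $x$ and every $t$; this forces $\omega_1=\omega_2$ and then $v_1=v_2$ through the Biot--Savart law. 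To this end I would introduce the Lagrangian discrepancy
\[
Q(t)=\int_\Omega\abs*{X_1(t,x)-X_2(t,x)}\,\abs*{\omega_0(x)}\di x,
\]
which is absolutely continuous in time (because $\omega_0\in L^1(\Omega)$ and the velocities are bounded) and vanishes at $t=0$, and aim to derive for it an Osgood-type differential inequality forcing $Q\equiv0$.

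Differentiating $Q$ and using the flow equations gives $\frac{\di}{\di t}Q(t)\le\int_\Omega\abs*{v_1(t,X_1)-v_2(t,X_2)}\,\abs*{\omega_0}\di x$, and I would split the integrand by inserting the intermediate term $v_1(t,X_2)$. The first piece $\abs*{v_1(t,X_1)-v_1(t,X_2)}$ is governed by the spatial regularity of $v_1$: since $v_1(t,\cdot)\in C_b^{0,\phi_\Theta}$ by~\eqref{eq:res_main_solution_Theta}, it is at most $C\,\phi_\Theta(\abs*{X_1-X_2})$, and integrating against the finite measure $\abs*{\omega_0}\di x$ while invoking the concavity of $\phi_\Theta$ through Jensen's inequality bounds this contribution by $C\,\|\omega_0\|_{L^1(\Omega)}\,\phi_\Theta\!\left(Q(t)/\|\omega_0\|_{L^1(\Omega)}\right)$. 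This is exactly where the hypotheses $\omega_0\in L^1$ (finiteness of the reference measure) and the concavity of $\phi_\Theta$ both enter.

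The genuinely delicate term, which I expect to be the main obstacle, is the second piece $\abs*{v_1(t,X_2)-v_2(t,X_2)}$, the difference of the two velocities read along the second flow. Writing both velocities through the Biot--Savart law and the pushforward structure gives
\[
v_1(t,z)-v_2(t,z)=\int_\Omega\bigl[k(z,X_1(t,x'))-k(z,X_2(t,x'))\bigr]\,\omega_0(x')\di x',
\]
so that, after moving the absolute value inside and applying Fubini to exchange the two integrations, its contribution to $\frac{\di}{\di t}Q$ would be at most
\[
\int_\Omega\Bigl[\int_\Omega\abs*{k(X_2(t,x),X_1(t,x'))-k(X_2(t,x),X_2(t,x'))}\,\abs*{\omega_0(x)}\di x\Bigr]\abs*{\omega_0(x')}\di x'.
\]
For fixed $x'$, pushing the inner integral forward by $X_2(t,\cdot)$ turns it into the integral of the oscillation of $k$ in its source variable against $\abs*{\omega_2(t)}$, and the plan is to bound it, via the kernel oscillation estimate~\eqref{eq:k_kernel_oscillation_bound} (the same potential estimate that yields the $\phi_\Theta$ modulus of continuity of the velocity, the roles of the two arguments being interchangeable by the structural symmetry of $k$) together with $\omega_2(t)\in L^1\cap Y^\Theta_{\ul}$, by $C\,\phi_\Theta(\abs*{X_1(t,x')-X_2(t,x')})$. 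A second application of Jensen's inequality would then control the whole term again by $C\,\|\omega_0\|_{L^1(\Omega)}\,\phi_\Theta\!\left(Q(t)/\|\omega_0\|_{L^1(\Omega)}\right)$. The reason the Lagrangian formulation is effective here is that, since $\omega_1$ and $\omega_2$ transport the \emph{same} $\omega_0$, the velocity difference is an average of kernel oscillations evaluated at the displaced pairs $(X_1(t,x'),X_2(t,x'))$, whose separations are precisely the quantities measured by $Q$.

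Combining the two bounds and setting $q=Q/\|\omega_0\|_{L^1(\Omega)}$ would yield the autonomous inequality $q'(t)\le C\,\phi_\Theta(q(t))$ with $q(0)=0$. The Osgood criterion then forces $q\equiv0$ provided $\int_{0^+}\di r/\phi_\Theta(r)=+\infty$, and the substitution $s=1-\log r$ in~\eqref{eq:def_phi_Theta} identifies this Osgood condition with the hypothesis~\eqref{eq:Yudovich_condition_Theta}. Hence $Q\equiv0$, the two flows coincide, and uniqueness follows.
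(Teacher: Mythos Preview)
Your proposal is correct and follows essentially the same Lagrangian strategy as the paper: estimate an $L^1$-weighted distance between the two flows, split the velocity difference by inserting $v_1(t,X_2)$, control the first piece via the $\phi_\Theta$-modulus of $v_1$, rewrite the second piece through the Biot--Savart representation and Fubini, bound the inner integral by the kernel-oscillation estimate~\eqref{eq:Kw_almost_Lip_strong} applied to $|\omega_2(t,\cdot)|\in L^1\cap Y^\Theta_{\ul}$, and close with Jensen (concavity) plus Osgood. The only cosmetic difference is that the paper integrates against the strictly positive weight $(|\omega_0|+\eta)\,\mathscr L^2$ with an auxiliary $\eta\in L^1\cap L^\infty$, $\eta>0$, so as to deduce $X_1=X_2$ \emph{everywhere}; you work with $|\omega_0|\,\mathscr L^2$ and obtain $X_1=X_2$ only $|\omega_0|$-a.e., which, as you correctly observe, already suffices for $\omega_1=\omega_2$.
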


In \cref{res:main_uniqueness}, we do not specify the required regularity of the open set $\Omega\subset\R^2$ in detail, since such regularity is only needed to ensure the well-posedness of the Biot--Savart law appearing in~\eqref{eq:Euler}.
As a matter of fact, we do not require the open set $\Omega\subset\R^2$ either to be bounded or to have finite measure, in contrast to the result in~\cref{res:Yudovich}.

Actually, \cref{res:main_uniqueness} does hold for any operator~$K$ satisfying some suitable conditions (which hold in particular in the case of the Biot--Savart law), see \cref{ass:est} and \cref{ass:diverg_non-tang} below.
 
Last but not least, the function $\phi_\Theta$ defined in~\eqref{eq:def_phi_Theta} provides a fully explicit modulus of continuity of the velocity in terms of the integrability of the vorticity.
In other words, the regularity of the velocity stated in~\eqref{eq:res_main_solution_Theta} can be seen as a more explicit version of~\eqref{eq:res_Yudovich_psi_Theta} (even for a growth function $\Theta$ possibly not  implying the uniqueness of the solution, see \cref{res:main_existence} below).
In particular, \cref{res:main_uniqueness} applies to the explicit growth function $\Theta_m$ in~\eqref{eq:yudo_Theta_example} for all $m\in\N$, for which one can easily see that
\begin{equation*}
\phi_{\Theta_m}(r)
\eqsim
r
\,
\log(\sfrac1r)
\,
\log_2(\sfrac1r)
\dots
\log_{m+1}(\sfrac1r)
\end{equation*} 
for all $r>0$ sufficiently small. 

\begin{remark}
Actually, the word ``Lagrangian'' can be removed from the statement of~\cref{res:main_uniqueness}, in the sense that uniqueness can be shown in the (a priori larger) class of all weak solutions. This is due to the fact that, for a continuity equation with an Osgood velocity field, all weak solutions in $L^1(\Omega)$ are in fact Lagrangian. This fact is not at all elementary and has been proved (via very different approaches) in~\cites{AB08,CJMO19}, also see~\cite{CC21} in the context of Sobolev velocity fields. We nevertheless prefer to state~\cref{res:main_uniqueness} for Lagrangian solutions in order to emphasize the best result that it is possible to prove just relying on our elementary approach.
\end{remark}

Concerning the existence of weak solutions of~\eqref{eq:Euler}, somewhat inspired by Taniuchi's \cref{res:Taniuchi}, we prove the following result.

\begin{theorem}[Existence]
\label{res:main_existence}
Let $\Omega\subset\R^2$ be either a sufficiently regular open set or the $2$-dimensional torus $\Omega=\T^2$ and let $p\in(2,+\infty)$.
For any initial datum $\omega_0\in L^1(\Omega)\cap L^p_{\ul}(\Omega)$, there exists a weak solution $(\omega,v)$ of~\eqref{eq:Euler} such that
\begin{equation}
\label{eq:res_main_solution_Lp_ul}
\omega\in 
L^\infty_{\loc}([0,+\infty);L^1(\Omega)\cap L^p_{\ul}(\Omega)),
\quad
v\in
L^\infty_{\loc}([0,+\infty);C_b^{0,1-\sfrac2p}(\Omega;\R^2)).
\end{equation}
Moreover, if $\omega_0\in L^1(\Omega)\cap Y^\Theta_{\ul}(\Omega)$, then the weak solution $(\omega,v)$ of~\eqref{eq:Euler} given in~\eqref{eq:res_main_solution_Lp_ul} additionally satisfies~\eqref{eq:res_main_solution_Theta} and, provided that $\Theta$ satisfies~\eqref{eq:Yudovich_condition_Theta}, is Lagrangian.
\end{theorem}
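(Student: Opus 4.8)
The plan is to construct the solution by a regularization-and-compactness scheme in which all the difficulty is concentrated in obtaining the correct \emph{uniform} bounds. First I would mollify and truncate the datum to produce smooth, compactly supported $\omega_0^n$ with $\|\omega_0^n\|_{L^1(\Omega)}\le\|\omega_0\|_{L^1(\Omega)}$ and $\|\omega_0^n\|_{L^p_{\ul}(\Omega)}\lesssim\|\omega_0\|_{L^p_{\ul}(\Omega)}$ uniformly in $n$, with $\omega_0^n\to\omega_0$ in $L^1_{\loc}(\Omega)$. For such data the classical theory provides global smooth solutions $(\omega^n,v^n)$, for which the vorticity is transported along the measure-preserving Lagrangian flow $X^n$ of $v^n$, so that $\omega^n(t,\cdot)=\omega_0^n\circ(X^n_t)^{-1}$. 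Incompressibility then gives the conservation $\|\omega^n(t)\|_{L^1(\Omega)}=\|\omega_0^n\|_{L^1(\Omega)}$ for every $t$.

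The core of the argument is to propagate the \emph{localized} $L^p$ norm, which, unlike the global one, is not conserved. Using the kernel bound~\eqref{eq:k_kernel_bound} and splitting the Biot--Savart integral into $\d(x,y)<1$ and $\d(x,y)\ge1$, Hölder's inequality (which requires exactly $p>2$, so that $\d(x,\cdot)^{-1}\in L^{p'}$ locally) yields
\begin{equation*}
\|v^n(t)\|_{L^\infty(\Omega)}
\lesssim
\|\omega^n(t)\|_{L^1(\Omega)}+\|\omega^n(t)\|_{L^p_{\ul}(\Omega)},
\end{equation*}
and I denote the right-hand side by $M^n(t)$. Since the flow displaces points at speed at most $M^n$, the preimage $(X^n_t)^{-1}(B_1(x))$ lies in a ball of radius $1+\int_0^t M^n(s)\di s$; covering this ball by unit balls and using measure preservation gives the closed estimate
\begin{equation*}
\|\omega^n(t)\|_{L^p_{\ul}(\Omega)}
\lesssim
\Bigl(1+\int_0^t M^n(s)\di s\Bigr)^{2/p}\,\|\omega_0\|_{L^p_{\ul}(\Omega)}.
\end{equation*}
Because $2/p<1$, substituting back into $M^n$ produces a sublinear differential inequality for $1+\int_0^t M^n(s)\di s$, hence a bound uniform in $n$ and locally uniform (in fact global) in $t$. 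For the regularity of $v^n$ I would write $v^n(x)-v^n(x')=\int_\Omega\bigl(k(x,y)-k(x',y)\bigr)\omega^n(t,y)\di y$ and split according to whether $\d(x,y)$ is smaller or larger than a multiple of $h=\d(x,x')$: the singular near-field is controlled by~\eqref{eq:k_kernel_bound} and the far-field by the oscillation bound~\eqref{eq:k_kernel_oscillation_bound}, and Hölder's inequality against $\|\omega^n\|_{L^p_{\ul}(\Omega)}$ (together with $\|\omega^n\|_{L^1(\Omega)}$ for the tail $\d(x,y)\ge1$) reproduces precisely the exponent $1-\sfrac2p$. This closing of the localized a priori estimate, where incompressibility, the flow geometry, and the restriction $p>2$ all enter together, is the step I expect to be the main obstacle.

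With these bounds secured on every bounded subset of $\Omega$, I would extract a limit by a diagonal argument. The equicontinuity and uniform boundedness of $v^n$ give, via Arzelà--Ascoli, a subsequence converging locally uniformly to some $v$, while $\omega^n$ converges weakly-$*$ in $L^\infty_{\loc}([0,+\infty);L^p_{\ul}(\Omega))$ to some $\omega$. To handle the time variable and the attainment of the initial datum I would invoke the Aubin--Lions lemma: since $\de_t\omega^n=-\div(v^n\omega^n)$ is bounded in a negative-order space while $\omega^n$ is bounded in $L^p_{\loc}(\Omega)$, the convergence of $\omega^n$ upgrades to strong convergence in $C_{\loc}([0,+\infty);H^{-1}_{\loc}(\Omega))$, so that $\omega(0)=\omega_0$. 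The nonlinearity passes to the limit by weak–strong convergence, since $v^n\to v$ strongly (locally uniformly) against the weakly convergent $\omega^n$; the Biot--Savart relation $v=K\omega$ survives by the kernel estimates, and the bounds on $\omega$ and $v$ by lower semicontinuity of the respective norms. This yields a weak solution satisfying~\eqref{eq:res_main_solution_Lp_ul}.

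Finally, for $Y^\Theta_{\ul}$ data I would rerun the same scheme with $L^p_{\ul}$ replaced throughout by $Y^\Theta_{\ul}$: dividing by $\Theta(p)$ and taking the supremum over $p$ converts the velocity estimate into the modulus of continuity $\phi_\Theta$ of~\eqref{eq:def_phi_Theta}, the logarithmic factor $1-\log\d(x,x')$ emerging from optimizing the near/far split over $p$ and the factor $\Theta(1-\log\d(x,x'))$ from the definition of $Y^\Theta_{\ul}$, which gives~\eqref{eq:res_main_solution_Theta}. When in addition $\Theta$ satisfies~\eqref{eq:Yudovich_condition_Theta}, the modulus $\phi_\Theta$ obeys the Osgood condition, so $v$ admits a unique Lagrangian flow $X$ by the Osgood uniqueness theorem; one then verifies that $\omega=\omega_0\circ X^{-1}$ agrees with the constructed weak solution, so that the latter is Lagrangian.
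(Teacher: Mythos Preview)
Your core a priori estimate---propagating $\|\omega^n(t)\|_{L^p_{\ul}}$ via the flow-displacement bound and closing the sublinear differential inequality for $1+\int_0^t M^n$---is exactly the paper's argument (Step~2 of the proof of \cref{res:existence_p_ul}). The differences lie in the surrounding scaffolding. You mollify to smooth compactly supported data, invoke the classical smooth Euler theory, and pass to the limit via the standard Aubin--Lions lemma in $H^{-1}_{\loc}$. The paper instead truncates $\omega_0^n=\max\{-n,\min\{n,\omega_0\}\}$ (which avoids the boundary complications mollification can create on domains $\Omega\subsetneq\R^2$) and, crucially, builds the $L^1\cap L^\infty$ solutions from scratch by a time-stepping scheme (\cref{res:existence_bounded}), then extracts the limit via a bespoke weak-compactness lemma (\cref{res:aubin-lions}) requiring only equi-integrability in space and equicontinuity of $t\mapsto\int\omega^n\phi$. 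What this buys the paper is that the whole proof uses no Sobolev spaces, no Calder\'on--Zygmund theory, and no smooth-solution theory, so it applies verbatim to any kernel satisfying Assumptions~\ref{ass:est} and~\ref{ass:diverg_non-tang}; your route is more standard but is tied to the genuine Biot--Savart setting. One point to tighten: your last sentence (``one then verifies that $\omega=\omega_0\circ X^{-1}$ agrees with the constructed weak solution'') is not automatic---the paper gets it by proving that the approximate flows $X^n$ converge locally uniformly to $X$ (equicontinuity from the Osgood modulus $\phi_\Theta$) and passing to the limit in the Lagrangian representation of the approximants, and you should do the same rather than compare a posteriori.
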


As for \cref{res:main_uniqueness} above, the regularity of the open set $\Omega\subset\R^2$ is only needed to guarantee the well-posedness of the Biot--Savart law. 
In fact, as before, also \cref{res:main_existence} applies to any operator $K$ satisfying the a priori estimates stated in \cref{ass:est} and \cref{ass:diverg_non-tang} below.

Up to our knowledge, the global-in-time existence result stated in \cref{res:main_existence} is new, even for~$K$ being the standard Biot--Savart operator.
Local-in-time existence of weak solutions of~\eqref{eq:Euler} with vorticity only in $L^p_{\ul}$ for some~$p>2$ is known for $\Omega=\R^2$, see~\cite{T04}*{Theorem~1.3}.

The global-in-time existence result for the $L^1\cap Y^\Theta_{\ul}$-spatial regularity of the vorticity in \cref{res:main_existence} does not require any assumption on the behavior at infinity of the growth function~$\Theta$.
In this sense, the global $L^1$ integrability of the vorticity allows us to remove the condition~\eqref{eq:res_Taniuchi_Theta} needed in \cref{res:Taniuchi}.

Finally, \cref{res:main_existence} provides the existence of a solution and a modulus of continuity for the velocity even for growth functions~$\Theta$ allowing for vorticities not included in the $\mathrm{BMO}$-like spaces considered by Vishik in~\cite{V99}, by Bernicot, Hmidi and Keraani in~\cites{BH15,BK14} and by Chen, Miao and Zhen in~\cite{CMZ19}.
Indeed, we can treat growth functions like $\Theta(p)\eqsim p^\alpha$ for $p\to+\infty$ for all $\alpha>0$, corresponding to singularities of order $|\log|x||^\alpha$.
In addition, since the classes considered in \cref{res:main_existence} are of integral type, our existence result allows for the cut-off of the initial datum, a property which is known not to preserve any $\mathrm{BMO}$-like regularity.

\subsection{Strategy of the proof}

Let us briefly explain the strategy behind the proof of our main results. 
We can divide our approach in three fundamental parts.

The first part is the study of the regularity of the velocity. 
As it is well-known, even for a bounded vorticity the associated velocity is in general not Lipschitz, but just $\log$-Lipschitz. 
In the case the vorticity satisfies $\omega\in L^1(\Omega)\cap L^p_{\ul}(\Omega)$ for some $p\in(2,+\infty)$ (actually, it is enough to assume $\omega\in L^q(\Omega)\cap L^p_{\ul}(\Omega)$ for any $1\le q<2<p<+\infty$, see \cref{res:holder_bounded} below), we prove that the velocity satisfies
\begin{equation}
\label{eq:miot}
|v(x)-v(y)|
\lesssim
\max\set*{1,\tfrac1{p-2}}
\,
(\|\omega\|_{L^1(\Omega)}
+
\|\omega\|_{L^p_{\ul}(\Omega)})
\,
p
\,
\d(x,y)^{1-\sfrac2p}
\end{equation}
for all $x,y\in\Omega$.

The H\"older continuity in~\eqref{eq:miot} should not come as a surprise.
Indeed, inequality~\eqref{eq:miot} is a well-known consequence of the Calder\'on--Zygmund theory and the Morrey inequality in the case of the Biot--Savart kernel, see~\cite{Y95}*{Section~4} and~\cite{M16}*{Lemma~2.2 and Remark~2.3} for instance.
Our approach, however, is different, since our proof of~\eqref{eq:miot} solely exploits the metric properties of the kernel (see \cref{ass:est} below) and some elementary integral estimates (known in the literature for bounded vorticities, see the proofs of~\cite{MB02}*{Lemma 8.1} and of~\cite{MP94}*{Lemma~3.1} for example).

The next key idea is the following simple but crucial observation. 
If $\omega\in L^1(\Omega)\cap Y^\Theta_{\ul}(\Omega)$, then~\eqref{eq:miot} holds for any $p\ge3$ and can be rewritten as
\begin{equation}
\label{eq:miot_Theta}
|v(x)-v(y)|
\lesssim
(\|\omega\|_{L^1(\Omega)}
+
\|\omega\|_{Y^\Theta_{\ul}(\Omega)})
\,
\Theta(p)
\,
p
\,
\d(x,y)^{1-\sfrac2p}
\end{equation}
for all $x,y\in\Omega$.
Here and in the rest of the paper, for simplicity and clearly without loss of generality, we can assume that $\Theta(3)\ge1$.
In particular, if $\d(x,y)$ is sufficiently small, then we can take
\begin{equation*}
p=1-\log\d(x,y)
\end{equation*}
in~\eqref{eq:miot_Theta} and discover that
\begin{equation*}
|v(x)-v(y)|
\lesssim
(\|\omega\|_{L^1(\Omega)}
+
\|\omega\|_{Y^\Theta_{\ul}(\Omega)})
\,
\phi_\Theta(\d(x,y))
\end{equation*}
for all $x,y\in\Omega$, where $\phi_\Theta$ is the  function defined in~\eqref{eq:def_phi_Theta}.
In particular, if $\omega\in L^1(\Omega)\cap L^\infty(\Omega)$, then $\Theta$ is bounded and the definition in~\eqref{eq:def_phi_Theta} gives
\begin{equation*}
|v(x)-v(y)|
\lesssim
(\|\omega\|_{L^1(\Omega)}
+
\|\omega\|_{Y^\Theta_{\ul}(\Omega)})
\,
\ell(\d(x,y))
\end{equation*}
for all $x,y\in\Omega$, where $\ell\colon[0,+\infty)\to[0,1]$ is defined as $\ell(0)=0$ and
\begin{equation}
\label{eq:def_ell}
\ell(r)
=
\begin{cases}
r\,(1-\log r)
&
\text{for}\ r\in(0,1],\\
1
& 
\text{for}\ r>1,
\end{cases}
\end{equation}  
recovering the classical $\log$-Lipschitz continuity of the velocity.

The second part is the existence of weak solutions. 
The key tool we use in this part is a simplified version of the celebrated Aubin--Lions Lemma, see \cref{res:aubin-lions} in \cref{sec:aubin-lions}, whose elementary proof is just a combination of the Dunford--Pettis Theorem and the Arzel\`a--Ascoli Compactness Theorem.
With this compactness criterion at hand, we first prove existence of weak solutions of~\eqref{eq:Euler} with vorticity in $L^1\cap L^\infty$.
Having in mind to deal with a general operator~$K$ which may not be necessarily the Biot--Savart one, we cannot rely on the existence theory for smooth solutions, but rather we build a weak solution of~\eqref{eq:Euler} from scratch via a time-stepping argument (a procedure which may be of some interest by itself even in the case of the Biot--Savart law).
The construction of weak solutions with vorticity in $L^1\cap L^p_{\ul}$ then follows by applying the Aubin--Lions-like Lemma to the sequence of bounded weak solutions starting from the truncations of the initial vorticity.

The third and last part is the uniqueness of weak solutions.
In contrast to Yudovich's original approach~\cite{Y95}, we do not employ an energy method by estimating the relative energy between two solutions, but we rather compare the flows associated to the two velocities by an elementary (non-linear) Picard--Lindel\"of iteration-like argument (similar to the one used for bounded vorticities in~\cite{MP94}*{Section~2.3} and in~\cite{L06}), which can also be seen as an estimate for the Wasserstein distance between the two vorticities. It is precisely at this point that the we exploit the \emph{Osgood property}
\begin{equation}
\label{eq:Osgood}
\int_0^{\e^{-2}}\frac{dr}{\phi_\Theta(r)}
=
\int_3^{+\infty}\frac{dp}{p\,\Theta(p)}=+\infty
\end{equation}
and the concavity of the modulus of continuity $\phi_\Theta$ given in~\eqref{eq:def_phi_Theta}. This approach is also somewhat reminiscent of the one by Serfati~\cites{S95A,S95B}, see also~\cite{AKLL15}.

\subsection{Organization of the paper}
The paper is organized as follows.
In \cref{sec:kernel_mapping_props}, we study the mapping properties of the operator~$K$ under some minimal assumptions on the kernel.
In \cref{sec:existence}, we prove the existence of weak solutions, namely  \cref{res:main_existence}, see \cref{res:existence_p_ul} and \cref{res:existence_Theta}.
In \cref{sec:uniqueness}, we establish the uniqueness of weak solutions, namely \cref{res:main_uniqueness}.
Finally, in \cref{sec:aubin-lions}, we state and prove the simplified version of the Aubin--Lions Lemma we need in the existence part, see \cref{res:aubin-lions}.

\section{Mapping properties of the kernel}
\label{sec:kernel_mapping_props}

In this section, we study the mapping properties of the operator~$K$. 
Here and in the rest of the paper, we rely on the metric properties of the underlying kernel in \cref{ass:est} below, and not on its specific form.
These properties are satisfied by the standard Biot--Savart kernel in any (bounded or unbounded) sufficiently smooth domain and on the $2$-dimensional torus (for instance see the aforementioned~\cites{MP84,MP94}).

\begin{assumption}[Estimates on the kernel]\label{ass:est} 
We assume that the kernel $k\colon\Omega\times\Omega\to\mathbb R^2$ satisfies
\begin{equation}
\label{eq:k_kernel_bound}
|k(x,y)|\le\frac{C_1}{\d(x,y)}
\qquad
\forall x,y\in\Omega,\ x\ne y,
\end{equation}
and
\begin{equation}
\label{eq:k_kernel_oscillation_bound}
|k(x,z)-k(y,z)|
\le
C_2\,
\frac{\d(x,y)}{\d(x,z)\,\d(y,z)}
\qquad
\forall x,y,z\in\Omega,\ z\ne x,y,
\end{equation}
for some constants $C_1, C_2>0$. 
\end{assumption}

We begin by establishing the H\"older continuity of the velocity, extending to our setting the proof of~\cite{MB02}*{Lemma 8.1} and of~\cite{MP94}*{Lemma~3.1}.

\begin{theorem}[H\"older continuity]
\label{res:holder_bounded}
Let \cref{ass:est} be in force and let $q\in[1,2)$ and $p\in(2,+\infty)$.
If $\omega\in L^q(\Omega)\cap L^p_{\ul}(\Omega)$, then the function 
\begin{equation}
\label{eq:def_Kw}
K\omega(x)
=
\int_\Omega k(x,z)\,\omega(z)\di z,
\quad
x\in\Omega,
\end{equation}
is well defined and satisfies
$K\omega\in C_b^{0,1-\sfrac 2p}(\Omega;\R^2)$
with
\begin{equation}
\label{eq:Kw_bounded}
\|K\omega\|_{L^\infty(\Omega;\,\R^2)}
\lesssim
\max\set*{1,\tfrac1{p-2}}
\,
(
\|\omega\|_{L^q(\Omega)}
+
\|\omega\|_{L^p_{\ul}(\Omega)}
)
\end{equation}
and
\begin{equation}
\label{eq:Kw_holder}
|K\omega(x)-K\omega(y)|
\lesssim
\max\set*{1,\tfrac1{p-2}}
\,
(\|\omega\|_{L^q(\Omega)}
+
\|\omega\|_{L^p_{\ul}(\Omega)})
\,
p\,\d(x,y)^{1-\sfrac 2p}
\end{equation}
for all $x,y\in\Omega$.
The implicit constants in~\eqref{eq:Kw_bounded} and~\eqref{eq:Kw_holder} only depend on the constants~$C_1$ and~$C_2$ in \cref{ass:est} and on the exponent~$q$ (but not on the exponent~$p$). 
\end{theorem}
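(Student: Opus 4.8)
The plan is to prove the boundedness estimate~\eqref{eq:Kw_bounded} and the H\"older estimate~\eqref{eq:Kw_holder} by splitting the defining integral~\eqref{eq:def_Kw} into a near and a far contribution, handled respectively by the local integrability furnished by $\|\omega\|_{L^p_{\ul}(\Omega)}$ and by the global integrability furnished by $\|\omega\|_{L^q(\Omega)}$. The two exponent conditions $p>2$ and $q<2$ are exactly what make the relevant radial integrals converge: near the diagonal the kernel bound $\d(x,z)^{-1}$ is locally $L^{p'}$ precisely when $p'<2$, i.e.\ $p>2$, while at infinity it is $L^{q'}$ precisely when $q'>2$, i.e.\ $q<2$, where $p'=\frac{p}{p-1}$ and $q'=\frac{q}{q-1}$ denote the conjugate exponents.

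For the $L^\infty$ bound I would fix $x\in\Omega$ and estimate $\int_\Omega|k(x,z)|\,|\omega(z)|\di z$ via~\eqref{eq:k_kernel_bound}. On $\Omega\cap B_1(x)$ I apply H\"older's inequality with exponents $p',p$, using $\|\omega\|_{L^p(\Omega\cap B_1(x))}\le\|\omega\|_{L^p_{\ul}(\Omega)}$ and computing $\int_{B_1(x)}\d(x,z)^{-p'}\di z$ in polar coordinates; this integral equals a constant times $(2-p')^{-1}=(p-1)(p-2)^{-1}$, whose $1/p'$-th power is controlled by $\max\{1,(p-2)^{-1/2}\}\le\max\{1,\tfrac1{p-2}\}$. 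On $\Omega\setminus B_1(x)$ I use $|k(x,z)|\le C_1$ and H\"older's inequality with exponents $q',q$, the integral $\int_{\d(x,z)\ge1}\d(x,z)^{-q'}\di z$ now converging to a constant depending only on $q$. Summing the two pieces yields~\eqref{eq:Kw_bounded}, and in particular the absolute convergence of~\eqref{eq:def_Kw}.

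For the H\"older estimate I would set $\delta=\d(x,y)$ and first treat the main regime $\delta<\frac13$, recovering the remaining case from~\eqref{eq:Kw_bounded} since $\delta^{1-\sfrac2p}$ is then bounded below. I split $\Omega$ into the near zone $\set{z:\d(x,z)<2\delta}$, the intermediate zone $\set{z:2\delta\le\d(x,z)<1}$ and the far zone $\set{z:\d(x,z)\ge1}$. On the near zone the oscillation bound is useless, so I instead estimate $|k(x,z)-k(y,z)|\le|k(x,z)|+|k(y,z)|$ and integrate the two terms as in the $L^\infty$ step over $B_{2\delta}(x)$ and $B_{3\delta}(y)$ respectively (using $\d(y,z)<3\delta$ there); the radial integral $\int_{\d(x,z)<2\delta}\d(x,z)^{-p'}\di z$ now produces the factor $\delta^{(2-p')/p'}=\delta^{1-\sfrac2p}$. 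On the intermediate and far zones I invoke the oscillation bound~\eqref{eq:k_kernel_oscillation_bound} together with the elementary comparison $\d(y,z)\ge\frac12\d(x,z)$ valid when $\d(x,z)\ge2\delta$, which reduces the integrand to $2C_2\,\delta\,\d(x,z)^{-2}$. H\"older's inequality against $L^p_{\ul}$ on the intermediate zone and against $L^q$ on the far zone then leaves the radial integrals $\int_{2\delta}^{1}r^{1-2p'}\di r$ and $\int_{1}^{+\infty}r^{1-2q'}\di r$. The first, raised to the power $1/p'$ and multiplied by the prefactor $\delta$, again collapses to $\delta^{1-\sfrac2p}$, while its constant $(\pi(p-1))^{1/p'}$ is precisely what produces the linear factor $p$ in~\eqref{eq:Kw_holder}; the second is a finite $q$-dependent constant contributing $\delta\lesssim\delta^{1-\sfrac2p}$. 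Continuity of $K\omega$, and hence membership in $C_b^{0,1-\sfrac2p}(\Omega;\R^2)$, follows at once from the resulting H\"older seminorm bound combined with~\eqref{eq:Kw_bounded}.

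The main obstacle I anticipate is purely bookkeeping: tracking the dependence of every constant on $p$ through $p'=\frac{p}{p-1}$ and the radial integrals, verifying that the powers of $\delta$ collapse to the single exponent $1-\sfrac2p$ in each zone, and confirming that the $p$-dependent prefactors assemble into the claimed $\max\{1,\tfrac1{p-2}\}\,p$ as $p\to2^+$ and $p\to+\infty$. Care is also needed in choosing the splitting radii so that all localized $L^p$ norms are taken over balls of radius at most $1$ (which forces the threshold $\delta<\frac13$), and in the uniform geometric estimate $\d(y,z)\ge\frac12\d(x,z)$ that makes the oscillation bound usable away from the diagonal.
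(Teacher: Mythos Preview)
Your proposal is correct and follows essentially the same three-zone decomposition as the paper: the near zone handled by the pointwise bound~\eqref{eq:k_kernel_bound} and the triangle inequality, the intermediate and far zones handled by the oscillation bound~\eqref{eq:k_kernel_oscillation_bound} together with the comparison $\d(y,z)\ge\tfrac12\d(x,z)$, with the $L^p_{\ul}$ norm controlling the local pieces and the $L^q$ norm the tail. The only cosmetic differences are your choice of splitting radii (intermediate zone ending at $1$ rather than~$2$, threshold $\delta<\tfrac13$ rather than $d<1$), which the paper absorbs via the change-of-radius inequality~\eqref{eq:Lp_ul_change_radius}; also note a small slip in your write-up of the far contribution for the $L^\infty$ bound, where you wrote $|k(x,z)|\le C_1$ but clearly meant $|k(x,z)|\le C_1\,\d(x,z)^{-1}$, as your subsequent integral $\int_{\d(x,z)\ge1}\d(x,z)^{-q'}\di z$ confirms.
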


\begin{remark}
Observe that the H\"older continuity of order $1-\sfrac{2}{p}$ is the same that would follow by using Morrey's inequality from the $W^{1,p}$ Sobolev regularity of the velocity field associated (via the standard Biot--Savart law) to an $L^p$ vorticity. In the proof below, we make no use of such tools, which are not available in the case of a kernel satisfying \cref{ass:est} only. 
\end{remark}

\begin{proof}[Proof of \cref{res:holder_bounded}] 
We divide the proof in three steps. 

\smallskip

\textit{Step~1: proof of~\eqref{eq:Kw_bounded}}.
Let $x\in\Omega$ be fixed.
We start by noticing that the function in~\eqref{eq:def_Kw} can be estimated as
\begin{align*}
|K\omega(x)|
\le
\int_{\Omega \cap B_1(x)}
|k(x,z)|\,|\omega(z)|\di z
+
\int_{\Omega\setminus B_1(x)}
|k(x,z)|\,|\omega(z)|\di z.
\end{align*}	
On the one side, by~\eqref{eq:k_kernel_bound} we can estimate
\begin{align*}
\int_{\Omega \cap B_1(x)}
|k(x,z)|\,|\omega(z)|\di z
&\le
C_1
\int_{\Omega \cap B_1(x)}
\frac{|\omega(z)|}{\d(x,z)}\di z
\lesssim
\|\omega\|_{L^p(\Omega\cap B_1(x))}
\left(\int_0^1 r^{1-p'}\di r\right)^{\sfrac1{p'}}
\\
&\lesssim 
\|\omega\|_{L^p_{\ul}(\Omega)}
\left(\frac{1}{2-p'}\right)^{\sfrac 1{p'}}
\lesssim
\max\set*{1,\tfrac1{p-2}}\,\|\omega\|_{L^p_{\ul}(\Omega)},
\end{align*}
where $p' = \sfrac{p}{(p-1)}\in(1,2)$.
On the other side, again by~\eqref{eq:k_kernel_bound}, we can estimate
\begin{align*}
\int_{\Omega\setminus B_1(x)}
|k(x,z)|\,|\omega(z)|\di z
\le
C_1
\int_{\Omega\setminus B_1(x)}
\frac{|\omega(z)|}{\d(x,z)}\di z
\lesssim
\|\omega\|_{L^q(\Omega)}.
\end{align*}
In conclusion, we find that
\begin{align*}
|K\omega(x)|
\lesssim
\|\omega\|_{L^q(\Omega)}
+
\max\set*{1,\tfrac1{p-2}}\,\|\omega\|_{L^p_{\ul}(\Omega)}\end{align*}
for each $x\in\Omega$, proving~\eqref{eq:Kw_bounded}.

\smallskip

\textit{Step~2: proof of~\eqref{eq:Kw_holder}, part 1}.
Let $x,y\in\Omega$ be fixed and assume that $d=\d(x,y)<1$. 
We note that
\begin{align}
|K\omega(x)-K\omega(y)|
&\le
\int_\Omega
|k(x,z)-k(y,z)|\,|\omega(z)|\di z 
\nonumber\\
&=
\label{eq:k_kernel_split_int}
\left(
\int_{\Omega\setminus B_2(x)}
\! + 
\int_{\Omega \cap (B_2(x)\setminus B_{2d}(x))}
\! +
\int_{\Omega \cap B_{2d}(x)}
\right)
|k(x,z)-k(y,z)|\,|\omega(z)|\di z.
\end{align}
By~\eqref{eq:k_kernel_oscillation_bound}, we can estimate the first integral in~\eqref{eq:k_kernel_split_int} as
\begin{align*}
\int_{\Omega\setminus B_2(x)}
|k(x,z)-k(y,z)|\,|\omega(z)|\di z
\le
C_2\,
\d(x,y)
\int_{\Omega\setminus B_2(x)}
\frac{|\omega(z)|}{\d(x,z)\,\d(y,z)}\di z
\lesssim
\d(x,y)\,
\|\omega\|_{L^q(\Omega)}.
\end{align*}
Again by~\eqref{eq:k_kernel_oscillation_bound}, we can estimate the second integral in~\eqref{eq:k_kernel_split_int} as
\begin{align*}
\int_{\Omega \cap (B_2(x)\setminus B_{2d}(x))}
|k(x,z)-k(y,z)|&\,|\omega(z)|\di z
\le
C_2\,
\d(x,y)
\int_{\Omega \cap (B_2(x)\setminus B_{2d}(x))}
\frac{|\omega(z)|}{\d(x,z)\,\d(y,z)}\di z.
\end{align*} 
Since $\d(x,y)=d$ and $\d(x,z)\geq 2d$, we have
\begin{equation*}
\d(x,z) \leq \d(x,y) + \d(y,z) = d + \d(y,z) \leq \tfrac{1}{2}\,\d(x,z) + \d(y,z),
\end{equation*}
and therefore
\begin{equation*}
\d(y,z)\ge\tfrac{1}{2}\,\d(x,z)
\quad
\text{for all}\ z\in \Omega\setminus B_{2d}(x).
\end{equation*}
Hence, we can estimate
\begin{align*}
\int_{\Omega \cap(B_2(x)\setminus B_{2d}(x))}
\frac{|\omega(z)|}{\d(x,z)\,\d(y,z)}\di z
&\lesssim
\int_{\Omega \cap (B_2(x)\setminus B_{2d}(x))}
\frac{|\omega(z)|}{\d(x,z)^2}\di z.
\end{align*}
Finally, using~\eqref{eq:k_kernel_bound} and observing that $B_{2d}(x)\subset B_{3d}(y)$, we can estimate the third integral in~\eqref{eq:k_kernel_split_int} as
\begin{align*}
\int_{\Omega \cap B_{2d}(x)}
|k(x,z)-k(y,z)|\,|\omega(z)|\di z
&\lesssim
\int_{\Omega \cap B_{2d}(x)}
\frac{|\omega(z)|}{\d(x,z)}\di z
+
\int_{\Omega \cap B_{3d}(y)}
\frac{|\omega(z)|}{\d(y,z)}\di z.
\end{align*}

\smallskip

\textit{Step~3: proof of~\eqref{eq:Kw_holder}, part 2}.
To conclude, we just need to estimate the functions
\begin{equation*}
\alpha(d)
=
\sup_{x\in\Omega}
\int_{\Omega \cap (B_2(x)\setminus B_{2d}(x))}
\frac{|\omega(z)|}{\d(x,z)^2}\di z
\quad\text{and}\quad
\beta(d)
=
\sup_{x\in\Omega}
\int_{\Omega \cap B_{3d}(x)}
\frac{|\omega(z)|}{\d(x,z)}\di z
\end{equation*}
defined for $d\in(0,1]$. 
Concerning the function~$\alpha$, by H\"older's inequality we can estimate
\begin{equation}
\label{eq:alpha_estimate}
\begin{split}
\alpha(d)
&\lesssim
\left(
\sup_{x\in\Omega}\|\omega\|_{L^p(\Omega \cap B_2(x))}
\right)
\left(
\int_{2d}^2 r^{1-2p'}\di r
\right)^{\sfrac1{p'}}
\\
&\lesssim
\|\omega\|_{L^p_{\ul}(\Omega)} \,
\left( \frac{2^{2-2p'}}{2p'-2}\right)^{\sfrac{1}{p'}} \,
\left( d^{2-2p'}-1 \right)^{\sfrac{1}{p'}}
\lesssim
\|\omega\|_{L^p_{\ul}(\Omega)} 
\,
p
\,
d^{\,-\sfrac 2p}.
\end{split}
\end{equation}
We can argue similarly for the function~$\beta$, obtaining
\begin{equation}
\label{eq:beta_estimate}
\begin{split}
\beta(d)
& \lesssim
\left(
\sup_{x\in\Omega}\|\omega\|_{L^p(\Omega \cap B_3(x))}
\right)
\left( \int_{0}^{3d} r^{1-p'}\di r \right)^{\sfrac1{p'}} 
\\
& \lesssim
\|\omega\|_{L^p_{\ul}(\Omega)} 
\,
\left( \frac{3^{2-p'}}{2-p'} \right)^{\sfrac{1}{p'}} \, d^{\,\sfrac{(2-p')}{p'}} 
\lesssim
\|\omega\|_{L^p_{\ul}(\Omega)} 
\,\frac{p}{p-2}
\, d^{\,1-\sfrac{2}{p}}.
\end{split}
\end{equation}
Recalling the bound~\eqref{eq:Kw_bounded}, this is enough to conclude the proof of~\eqref{eq:Kw_holder}.
\end{proof}

From \cref{res:holder_bounded} we easily derive the following result, generalizing the well-known $\log$-Lipschitz continuity of the velocity valid for vorticities in $L^1\cap L^\infty$.

\begin{corollary}[$\phi_\Theta$-continuity]
\label{res:almost_Lip}
Let \cref{ass:est} be in force and let $q\in[1,2)$.
If $\omega\in L^q(\Omega)\cap Y^\Theta_{\ul}(\Omega)$, then 
$K\omega\in C_b^{0,\phi_\Theta}(\Omega;\R^2)$
with
\begin{equation}
\label{eq:Kw_bounded_Theta}
\|K\omega\|_{L^\infty(\Omega;\,\R^2)}
\lesssim
\|\omega\|_{L^q(\Omega)}
+
\|\omega\|_{Y^\Theta_{\ul}(\Omega)}
\end{equation}
and
\begin{equation}
\label{eq:Kw_almost_Lip}
|K\omega(x)-K\omega(y)|
\lesssim
(\|\omega\|_{L^q(\Omega)}
+
\|\omega\|_{Y^\Theta_{\ul}(\Omega)})
\,
\phi_\Theta(\d(x,y))
\end{equation}
for all $x,y\in\Omega$, where $\phi_\Theta$ is the function defined in~\eqref{eq:def_phi_Theta}.
The implicit constants in~\eqref{eq:Kw_bounded_Theta} and~\eqref{eq:Kw_almost_Lip} only depend on the constants $C_1$ and $C_2$ in \cref{ass:est} and on the exponent~$q$ (but not on the behavior of the growth function~$\Theta$ at infinity). 
\end{corollary}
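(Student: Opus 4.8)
The plan is to read off both estimates directly from \cref{res:holder_bounded}, using the defining bound $\|\omega\|_{L^p_{\ul}(\Omega)}\le\Theta(p)\,\|\omega\|_{Y^\Theta_{\ul}(\Omega)}$ and optimizing the free exponent~$p$ as a function of the distance $\d(x,y)$. Since $\omega\in Y^\Theta_{\ul}(\Omega)$ lies in $L^p_{\ul}(\Omega)$ for every finite~$p$, the function $K\omega$ is well defined by \cref{res:holder_bounded}, and no further integrability input is required.

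First I would prove the $L^\infty$ bound \eqref{eq:Kw_bounded_Theta} by inserting $p=3$ into \eqref{eq:Kw_bounded}: then $\max\set*{1,\sfrac1{p-2}}=1$ and $\|\omega\|_{L^3_{\ul}(\Omega)}\le\Theta(3)\,\|\omega\|_{Y^\Theta_{\ul}(\Omega)}$, whence $\|K\omega\|_{L^\infty(\Omega;\R^2)}\lesssim\|\omega\|_{L^q(\Omega)}+\|\omega\|_{Y^\Theta_{\ul}(\Omega)}$, the implicit constant depending on $\Theta(3)$ (a finite value, not on the growth of~$\Theta$ at infinity). For the modulus estimate \eqref{eq:Kw_almost_Lip}, I set $d=\d(x,y)$ and split according to whether $d>\e^{-2}$ or $d\le\e^{-2}$. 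When $d>\e^{-2}$, the value $\phi_\Theta(d)=\e^{-2}\,3\,\Theta(3)$ is a fixed positive constant, so \eqref{eq:Kw_almost_Lip} follows at once from $|K\omega(x)-K\omega(y)|\le 2\|K\omega\|_{L^\infty(\Omega;\R^2)}$ and the bound just proved.

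When $d\le\e^{-2}$ I would choose in \eqref{eq:Kw_holder} the exponent $p=1-\log d$, which satisfies $p\ge 3$ precisely in this range, so that again $\max\set*{1,\sfrac1{p-2}}=1$. Using $\|\omega\|_{L^q(\Omega)}+\|\omega\|_{L^p_{\ul}(\Omega)}\le\Theta(p)\,(\|\omega\|_{L^q(\Omega)}+\|\omega\|_{Y^\Theta_{\ul}(\Omega)})$ — where the monotonicity of~$\Theta$ together with the normalization $\Theta(3)\ge1$ is what lets the $L^q$ term be absorbed into the factor $\Theta(p)$ — turns \eqref{eq:Kw_holder} into a bound by $(\|\omega\|_{L^q(\Omega)}+\|\omega\|_{Y^\Theta_{\ul}(\Omega)})\,\Theta(p)\,p\,d^{1-\sfrac2p}$.

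The only point requiring an actual computation, and the crux of the argument, is to check that this last expression is comparable to $\phi_\Theta(d)$. With $p=1-\log d$ one has $\log d=1-p$, hence $d^{-\sfrac2p}=\exp(2-\sfrac2p)$, which for $p\ge3$ stays between $\e^{4/3}$ and $\e^2$; consequently $d^{1-\sfrac2p}\eqsim d$, while $\Theta(p)\,p=(1-\log d)\,\Theta(1-\log d)$ is exactly the factor appearing in the definition \eqref{eq:def_phi_Theta} of $\phi_\Theta(d)$ on $(0,\e^{-2}]$. Collecting the two cases yields \eqref{eq:Kw_almost_Lip}, and together with the $L^\infty$ bound this gives $K\omega\in C_b^{0,\phi_\Theta}(\Omega;\R^2)$. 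I do not expect any genuine obstacle beyond this elementary optimization of~$p$, since all the analytic content has already been carried out in \cref{res:holder_bounded}.
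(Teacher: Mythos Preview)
Your proposal is correct and follows essentially the same route as the paper: both proofs take $p=3$ in \eqref{eq:Kw_bounded} for the $L^\infty$ bound, choose $p=1-\log d$ in \eqref{eq:Kw_holder} for $d\le\e^{-2}$, and handle $d>\e^{-2}$ via the $L^\infty$ bound. Your computation $d^{-2/p}=\exp(2-\sfrac2p)\in[\e^{4/3},\e^2]$ is exactly the estimate the paper uses implicitly when writing $d^{1-\frac2{1-\log d}}\lesssim d$.
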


\begin{proof}
We divide the proof in two steps.

\smallskip

\textit{Step~1: proof of~\eqref{eq:Kw_bounded_Theta}}.
Taking $p=3$ in~\eqref{eq:Kw_bounded}, since $\Theta(3)\ge1$ by assumption, we immediately see that
\begin{align*}
\|K\omega\|_{L^\infty(\Omega;\,\R^2)}
\lesssim
\|\omega\|_{L^q(\Omega)}
+
\|\omega\|_{L^3_{\ul}(\Omega)}
\lesssim
\|\omega\|_{L^q(\Omega)}
+
\Theta(3)\,\|\omega\|_{Y^\Theta_{\ul}(\Omega)}
\lesssim
\|\omega\|_{L^q(\Omega)}
+
\|\omega\|_{Y^\Theta_{\ul}(\Omega)}.
\end{align*} 

\smallskip

\textit{Step~2: proof of~\eqref{eq:Kw_almost_Lip}}.
Let $x,y\in\Omega$ be such that $d=\d(x,y)\in(0,e^{-2}]$. 
Taking $p=1-\log d\in[3,+\infty)$ in~\eqref{eq:Kw_holder}, we get that $\Theta(1-\log d)\ge\Theta(3)\ge1$ and thus
\begin{align*}
|K\omega(x)-K\omega(y)|
&\lesssim
\big(\|\omega\|_{L^q(\Omega)}
+
\Theta(1-\log d)\|\omega\|_{Y^\Theta_{\ul}(\Omega)}\big)
\,
(1-\log d)
\,
d^{1-\frac2{1-\log d}}
\\
&\le
(\|\omega\|_{L^q(\Omega)}
+
\|\omega\|_{Y^\Theta_{\ul}(\Omega)})
\,
\Theta(1-\log d)
\,
(1-\log d)
\,
d^{1-\frac2{1-\log d}}
\\
&\lesssim
(\|\omega\|_{L^q(\Omega)}
+
\|\omega\|_{Y^\Theta_{\ul}(\Omega)})
\,
d
\,
(1-\log d)
\,
\Theta(1-\log d).
\end{align*}
Thanks to the bound~\eqref{eq:Kw_bounded_Theta}, this proves~\eqref{eq:Kw_almost_Lip}.
\end{proof}

\begin{remark}[Stronger versions of~\eqref{eq:Kw_holder} and~\eqref{eq:Kw_almost_Lip}]
\label{rem:K_stronger}
For later use, we observe that, in Steps~2 and~3 of the proof of \cref{res:holder_bounded}, we actually showed that
\begin{equation}\label{eq:holder_strong}
\int_\Omega
|k(x,z)-k(y,z)|\,|\omega(z)|\di z
\lesssim
\max\set*{1,\tfrac1{p-2}}
\,
(\|\omega\|_{L^q(\Omega)}
+
\|\omega\|_{L^p_{\ul}(\Omega)})
\,
p\,\d(x,y)^{1-\sfrac 2p}
\end{equation}
for all $x,y\in\Omega$, where the implicit constant only depends on $C_1$ and $C_2$ in \cref{ass:est} and on~$q$ (but not on~$p$). 
Consequently, in Step~2 of the proof of \cref{res:almost_Lip}, we actually showed that
\begin{equation}
\label{eq:Kw_almost_Lip_strong}
\int_\Omega
|k(x,z)-k(y,z)|\,|\omega(z)|\di z
\lesssim
(\|\omega\|_{L^q(\Omega)}
+
\|\omega\|_{Y^\Theta_{\ul}(\Omega)})
\,
\phi_\Theta(\d(x,y))
\end{equation}
for all $x,y\in\Omega$, where the implicit constant only depends on the constants $C_1$ and $C_2$ in \cref{ass:est} and on the exponent~$q$ (but not on the behavior of the growth function~$\Theta$ at infinity).
\end{remark}

\begin{remark}[Yudovich's approach]
\label{rem:Yudovich_approach}
Inequality~\eqref{eq:Kw_almost_Lip} in \cref{res:almost_Lip} can also be obtained by re-doing the estimates~\eqref{eq:alpha_estimate} and~\eqref{eq:beta_estimate} following Yudovich's approach in~\cite{Y95}*{Lemma~3.1}.
Indeed, for $\eps\in(0,\sfrac{1}{3})$ we have
\begin{equation*}
\begin{split}
\alpha(d)
&\lesssim
d^{-2\eps}
\int_{\Omega\cap(B_2(x)\setminus B_{2d}(x))}\frac{|\omega(z)|}{\d(x,z)^{2(1-\eps)}}\di z
\lesssim
d^{-2\eps}
\left(
\sup_{x\in\Omega}\|\omega\|_{L^{\sfrac1\eps}(\Omega \cap B_2(x))}
\right)
\left(
\int_{2d}^2 r^{-1}\di r
\right)^{1-\eps}
\\
&\lesssim
\|\omega\|_{Y^\Theta_{\ul}(\Omega)} 
\,
\Theta(\tfrac1\eps)
\,
(1-\log d)^{1-\eps}
\,
d^{-2\eps}
\end{split}	
\end{equation*}  
by applying H\"older's inequality with exponents $\sfrac1\eps$ and $\sfrac1{(1-\eps)}$. 
A similar computation shows that
\begin{equation*}
\beta(d)
\lesssim
\left(
\sup_{x\in\Omega}\|\omega\|_{L^{\sfrac1\eps}(\Omega \cap B_{3d}(x))}
\right)
\left(
\int_0^{3d} r^{1-\sfrac{1}{(1-\eps)}}\di r
\right)^{1-\eps}
\lesssim
\|\omega\|_{Y^\Theta_{\ul}(\Omega)} 
\,
\Theta(\tfrac1\eps)
\,
d^{1-2\eps},
\end{equation*}  
so that
\begin{equation*}
|K\omega(x)-K\omega(y)|
\lesssim
(\|\omega\|_{L^q(\Omega)}
+
\|\omega\|_{Y^\Theta_{\ul}(\Omega)})
\,
\tilde\psi_\Theta(\d(x,y))
\end{equation*}
for all $x,y\in\Omega$, where
\begin{equation}
\label{eq:def_psi_Theta_Yudovich}
\tilde\psi_\Theta(d)
=
\inf
\set*{
\Theta(\tfrac1\eps)
\,
(1-\log d)^{1-\eps}
\,
d^{1-2\eps}
:
0<\eps<\tfrac13
}
\end{equation}
for all $d\in(0,e^{-2}]$, in analogy with the definition in~\eqref{eq:yudo_inf_function}.
Due to its implicit definition in~\eqref{eq:def_psi_Theta_Yudovich}, the function $\tilde\psi_\Theta$ is not easily exploitable for further computations (at least, unless $\Theta$ has a more explicit expression, such as~\eqref{eq:yudo_Theta_example}). 
However, as in the proof of \cref{res:almost_Lip}, one realizes that the choice $\eps=\sfrac1{(1-\log d)}$ in~\eqref{eq:def_psi_Theta_Yudovich} gives
\begin{align*}
\tilde\psi_\Theta(d)
\lesssim
d\,(1-\log d)\,\Theta(1-\log d)
\lesssim
\phi_\Theta(d)
\end{align*}
for all $d\in(0,e^{-2}]$, so that we recover~\eqref{eq:Kw_almost_Lip} also via this alternative approach. 
\end{remark}

\section{Existence of weak solutions (\texorpdfstring{\cref{res:main_existence}}{Theorem 1.8})}
\label{sec:existence}

In this section, we show existence of weak solutions for the Euler equations~\eqref{eq:Euler}. Here and in the rest of the paper, in addition to \cref{ass:est}, we assume two further properties concerning the divergence and the behavior at the boundary of the velocity generated by the operator~$K$.

\begin{assumption}[Bounded divergence and no-flow boundary condition]
\label{ass:diverg_non-tang}
Let $p\in(2,+\infty]$ be given.
We assume that the operator
\begin{equation*}
K\colon L^1(\Omega)\cap L^p_{\ul}(\Omega)\to C^{0,1-\sfrac 2p}_b(\Omega;\mathbb R^2) 
\end{equation*}
defined in~\eqref{eq:def_Kw} of \cref{res:holder_bounded} is such that the distributional divergence $\div(K\omega)$ satisfies 
\begin{equation}
\label{eq:null_div_K}
\|\div(K\omega)\|_{L^\infty(\Omega)}
\le 
C_3\,\|\omega\|_{L^1(\Omega)}
\end{equation}
for all $\omega\in L^1(\Omega)\cap L^p_{\ul}(\Omega)$, for some constant $C_3>0$.
If $\Omega\subset\R^2$ is an open set with sufficiently regular boundary, we assume the {\em no-flow boundary condition}
\begin{equation}
\label{eq:non-tang_K}
\nu_\Omega\cdot K\omega=0
\quad
\text{on}\ \de\Omega	
\end{equation}
for all $\omega\in L^1(\Omega)\cap L^p_{\ul}(\Omega)$.
Condition~\eqref{eq:non-tang_K} is empty if either $\Omega=\R^2$ or $\Omega=\T^2$.
\end{assumption}

Note that \cref{ass:diverg_non-tang} is trivially satisfied in the case of the standard Biot--Savart law, since the specific form of the kernel entails $\div(K\omega)=0$.

\medskip

We will employ the following standard definition of \emph{weak solution} and of \emph{Lagrangian weak solution} of the Euler equations~\eqref{eq:Euler}.

\begin{definition}[Weak solution]
\label{def:weak_solution}
Let $p\in(2,+\infty]$.
Given an initial condition for the vorticity $\omega_0\in L^1(\Omega)\cap L^p_{\ul}(\Omega)$, we say that the couple $(\omega,v)$ is a \emph{weak solution of~\eqref{eq:Euler} with vorticity in $L^1\cap L^p_{\ul}$} provided that:
\begin{enumerate}[(i)]

\item 
$\omega\in L^\infty_{\loc}([0,+\infty);L^1(\Omega)\cap L^p_{\ul}(\Omega))$;

\item 
$v=K\omega$ in $L^\infty_{\loc}([0,+\infty);C_b(\Omega;\R^2))$;

\item 
given $T\in(0,+\infty)$, for all $\phi\in C^1_c([0,T] \times \Omega)$ it holds
\begin{equation*}
\int_\Omega\phi(T,x)\,\omega(T,x)\di x
-
\int_\Omega\phi(0,x)\,\omega_0(x)\di x
=
\int_0^T\int_\Omega\omega\,(\de_t\phi+v\cdot\nabla\phi)\di x\di t.
\end{equation*}
\end{enumerate} 
A weak solution $(\omega,v)$ of~\eqref{eq:Euler} with vorticity in $L^1\cap L^p_{\ul}$ is called \emph{Lagrangian} if $\omega(t,\cdot)=X(t,\cdot)_\#\omega_0$ for a.e.\ $t\in[0,+\infty)$, where $X$ is a flow associated to the velocity field $v$. 
\end{definition}

In \cref{def:weak_solution}, we say that $X$ is a flow associated to the velocity field $v$ if
\begin{equation}
\label{eq:ode_peano}
\begin{cases}
\frac{\di}{\di t}X(t,x)=v(t,X(t,x))
&
\text{for}\ (t,x)\in(0,+\infty)\times\Omega,
\\[3mm]
X(0,x)=x
&
\text{for}\
x\in\Omega.
\end{cases}
\end{equation}
The ODE in~\eqref{eq:ode_peano} is understood in the classical sense. Since the velocity belongs to the space $L^\infty_{\loc}([0,+\infty);C_b(\Omega;\R^2))$ and satisfies the no-flow boundary condition~\eqref{eq:non-tang_K}, the existence of a solution~$X$ of the problem~\eqref{eq:ode_peano} follows from the Peano  Theorem.
The relation $\omega(t,\cdot)=X(t,\cdot)_\#\omega_0$ stands for the usual \emph{push-forward}, i.e.\ 
\begin{equation*}
\int_\Omega\omega(t,\cdot)\,\phi\di x
=
\int_\Omega\omega_0\,\phi(X(t,\cdot)) \di x
\end{equation*}
for all bounded measurable functions $\phi\colon\Omega\to\R$. 

\medskip

We are now ready to deal with the existence of weak solutions. We begin with the case of weak solutions with vorticity in $L^1\cap L^\infty$. The result in \cref{res:existence_bounded} below is well known in the case of the standard Biot--Savart kernel. In our more general setting, we cannot rely on any general results of existence of smooth solutions for smooth data, due to the lack of an evolution equation for the velocity. Instead, we construct the solution by combining a time-stepping argument with the Aubin--Lions-like Lemma given in \cref{sec:aubin-lions}.

Here and in the following, $\ell\colon[0,+\infty)\to[0,1]$ denotes the log-Lipschitz modulus of continuity defined in~\eqref{eq:def_ell}.

\begin{theorem}[Existence in $L^1\cap L^\infty$]
\label{res:existence_bounded} 
Let Assumptions~\ref{ass:est} and~\ref{ass:diverg_non-tang} be in force.
Then there exists a Lagrangian weak solution $(\omega,v)$ of~\eqref{eq:Euler} with vorticity in $L^1\cap L^\infty$ starting from the initial datum $\omega_0\in L^1(\Omega)\cap L^\infty(\Omega)$
such that
\begin{equation}
\label{eq:existence_bounded_L1}
\|\omega\|_{L^\infty([0,T];\,L^1(\Omega))}
\le
\|\omega_0\|_{L^1(\Omega)},
\end{equation}
\begin{equation}
\label{eq:existence_bounded_Linfty}
\|\omega\|_{L^\infty([0,T];\,L^\infty(\Omega))}
\le
\exp(C_3T\|\omega_0\|_{L^1(\Omega)})
\,
\|\omega_0\|_{L^\infty(\Omega)},
\end{equation} 
\begin{equation}
\label{eq:existence_bounded_v_bounded}
\|v\|_{L^\infty([0,T];\,L^\infty(\Omega;\,\R^2))}
\lesssim
\|\omega_0\|_{L^1(\Omega)}
+
\|\omega_0\|_{L^\infty(\Omega)},
\end{equation}
and
\begin{equation}
\label{eq:existence_bounded_v_log-Lip}
|v(t,x)-v(t,y)|\lesssim
(\|\omega_0\|_{L^1(\Omega)}
+
\|\omega_0\|_{L^\infty(\Omega)})\,
\ell(\d(x,y)),
\;\;
\text{for all $x,y\in\Omega$ and a.e.~$t\in [0,T]$,}
\end{equation}
for all $T\in(0,+\infty)$, where the implicit constants may depend on the chosen~$T$. \end{theorem}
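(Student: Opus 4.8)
The plan is to build the solution by an explicit time-stepping scheme in the velocity, combined with the compactness furnished by \cref{res:aubin-lions}. Fix $T\in(0,+\infty)$ and $N\in\N$, put $\tau=T/N$ and $t_n=n\tau$. I would define the approximations inductively: set $\omega^\tau(0,\cdot)=\omega_0$ and, given $\omega^\tau(t_n,\cdot)$, freeze the velocity $v^\tau_n=K\omega^\tau(t_n,\cdot)$, let $X^\tau_n$ be the flow of the autonomous field $v^\tau_n$ on $[t_n,t_{n+1}]$ starting from the identity, and let $\omega^\tau$ on $[t_n,t_{n+1}]$ be the solution of the linear continuity equation $\de_t\omega^\tau+\div(v^\tau_n\omega^\tau)=0$, equivalently $\omega^\tau(t,\cdot)=X^\tau_n(t,\cdot)_\#\omega^\tau(t_n,\cdot)$. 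By \cref{res:almost_Lip} applied with $q=1$ and a bounded growth function (for which $\phi_\Theta\lesssim\ell$), each frozen field $v^\tau_n$ is bounded and log-Lipschitz, hence Osgood; together with the no-flow condition~\eqref{eq:non-tang_K}, this guarantees that $X^\tau_n$ exists as a flow of homeomorphisms of $\Omega$, so that the scheme is well posed.

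First I would establish the uniform bounds. Testing the continuity equation with $\beta(\omega^\tau)$ and using that the boundary flux vanishes by~\eqref{eq:non-tang_K}, the renormalization identity $\de_t\beta(\omega^\tau)+\div(v^\tau_n\beta(\omega^\tau))=(\beta(\omega^\tau)-\omega^\tau\beta'(\omega^\tau))\div v^\tau_n$ gives, for $\beta(s)=|s|$, conservation of the $L^1$ norm, whence~\eqref{eq:existence_bounded_L1}; and, for $\beta(s)=|s|^p$, the estimate $\frac{\di}{\di t}\|\omega^\tau\|_{L^p(\Omega)}^p\le(p-1)\,\|\div v^\tau_n\|_{L^\infty(\Omega)}\,\|\omega^\tau\|_{L^p(\Omega)}^p$. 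Inserting the divergence bound~\eqref{eq:null_div_K} together with the already-proved conservation of the $L^1$ norm, applying Gr\"onwall and letting $p\to+\infty$ yields~\eqref{eq:existence_bounded_Linfty}, uniformly in $\tau$. Feeding these two uniform bounds back into \cref{res:almost_Lip} makes the velocities $v^\tau(t,\cdot)=K\omega^\tau(t,\cdot)$ equibounded and equi-log-Lipschitz in space, uniformly in $t$ and $\tau$, which will give~\eqref{eq:existence_bounded_v_bounded} and~\eqref{eq:existence_bounded_v_log-Lip} once the limit is taken.

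The core of the argument, and the step I expect to be the main obstacle, is the passage to the limit $\tau\to0$ in the nonlinear term. The uniform $L^1\cap L^\infty$ bound provides equi-integrability (Dunford--Pettis), while the continuity equation controls $\de_t\omega^\tau$ in a weak norm and hence its equicontinuity in time; combined with the uniform spatial equicontinuity of the smoothed fields $v^\tau$ (log-Lipschitz estimate plus Arzel\`a--Ascoli), these are exactly the hypotheses of \cref{res:aubin-lions}, which produces a subsequence with $\omega^\tau\rightharpoonup\omega$ weakly-$*$ in $L^\infty_{\loc}([0,+\infty);L^\infty(\Omega))$ and $v^\tau\to v=K\omega$ locally uniformly. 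The local uniform convergence of the velocity against the weak-$*$ convergence of the vorticity is precisely what lets one pass to the limit in the product $v^\tau\omega^\tau$. One must also verify the \emph{consistency} of the scheme, namely that the piecewise-frozen field differs from $v^\tau$ by an error vanishing as $\tau\to0$; this follows from the uniform-in-$\tau$ time-equicontinuity of $v^\tau$. Passing to the limit in the weak formulation~(iii) of \cref{def:weak_solution} then shows that $(\omega,v)$ is a weak solution with vorticity in $L^1\cap L^\infty$, and the uniform bounds above are preserved in the limit.

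Finally, to obtain the \emph{Lagrangian} property I would invoke the stability of flows under the Osgood condition. Since the limit field $v$ is bounded and log-Lipschitz, the Cauchy problem~\eqref{eq:ode_peano} has a unique flow $X$, and the approximate flows converge, $X^\tau\to X$ locally uniformly. Combining this with the representation $\omega^\tau=X^\tau(t,\cdot)_\#\omega_0$ and the weak-$*$ convergence $\omega^\tau\rightharpoonup\omega$ identifies $\omega(t,\cdot)=X(t,\cdot)_\#\omega_0$ for a.e.\ $t$, which is the required Lagrangian structure.
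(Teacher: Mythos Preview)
Your plan matches the paper's proof step by step---time-stepping with frozen velocities, uniform $L^1\cap L^\infty$ bounds, the Aubin--Lions-type compactness of \cref{res:aubin-lions}, the consistency check that the frozen field agrees in the limit with $K\omega^\tau$, and flow stability under the Osgood condition for the Lagrangian structure. The only point you should make explicit is tightness when $|\Omega|=+\infty$: the uniform $L^1\cap L^\infty$ bound alone does not prevent mass from escaping to infinity, and the paper verifies hypothesis~\eqref{eq:AL_eps-Omega_eps} separately, using the representation $\omega^\tau(t,\cdot)=X^\tau(t,\cdot)_\#\omega_0$ together with the uniform $L^\infty$ bound on the velocity to control the displacement of the flow.
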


\begin{proof}
Let $T\in(0,+\infty)$ and $\omega_0\in L^1(\Omega)\cap L^\infty(\Omega)$ be fixed and define $v_0=K\omega_0$.

\smallskip

\textit{Step~1: construction of $(\omega^n,v^n)_{n\in\N}$ by time-stepping}.
Let $n\in\N$ and consider the time step~$\sfrac{T}{n}$.
We construct a sequence of functions $(\omega^n,v^n)_{n\in\N}$ as follows.
We set $\omega^n_0=\omega_0$ for all $n\in\N$ by definition.
If $t\in[(j-1)\sfrac{T}{n},\sfrac{jT}{n}]$ for some $j\in\set*{1,\dots,n}$, then we define $\omega^n(t,\cdot)=w(t,\cdot)$, where $w$ is advected on the interval $[(j-1)\sfrac{T}{n},\sfrac{jT}{n}]$ by the time-independent velocity 
$v^n((j-1)\sfrac{T}{n},\cdot)=K\omega^n((j-1)\sfrac{T}{n},\cdot)$,
that is, $w$ solves
\begin{equation}
\label{eq:bingo}
\begin{cases}
\de_t w+\div(v^n((j-1)\sfrac{T}{n},\cdot) \ w)=0 
& \text{in}\ ((j-1)\sfrac{T}{n},\sfrac{jT}{n})\times\Omega,\\[2mm]
w((j-1)\sfrac{T}{n},\cdot)=\omega^n((j-1)\sfrac{T}{n},\cdot)
& \text{on}\ \Omega,
\end{cases}
\end{equation}
in the distributional sense.

We show that the couple $(\omega^n,v^n)$ is well defined for each $n\in\N$ by an inductive argument. 
By \cref{res:almost_Lip} for $t\in[(j-1)\sfrac{T}{n},\sfrac{jT}{n}]$ and $j=1,\dots,n$ we have
\begin{equation}
\label{eq:v_n_bounded}
\|v^n(t,\cdot)\|_{L^\infty(\Omega;\,\R^2)}
\lesssim
\|\omega^n((j-1)\sfrac{T}{n},\cdot)\|_{L^1(\Omega)}
+
\|\omega^n((j-1)\sfrac{T}{n},\cdot)\|_{L^\infty(\Omega)}
\end{equation}
and
\begin{equation}
\label{eq:v_n_log-Lip}
|v^n(t,x)-v^n(t,y)|
\lesssim
\big(\|\omega^n((j-1)\sfrac{T}{n},\cdot)\|_{L^1(\Omega)}
+
\|\omega^n((j-1)\sfrac{T}{n},\cdot)\|_{L^\infty(\Omega)}\big)
\,
\ell(\d(x,y)),
\;
\forall x,y\in\Omega.
\end{equation}
We argue inductively on $j=1,\dots,n$.
For $j=1$, we have $\omega^n(t,\cdot)=X^n(t,\cdot)_\#\omega_0$ for all $t\in[0,\sfrac{T}{n}]$, where $[0,\sfrac{T}{n}]\ni t\mapsto X^n(t,\cdot)$ is the flow associated to the time-independent velocity field $v_0^n=v_0$. 
Consequently, for all $t\in[0,\sfrac{T}{n}]$ we can estimate
\begin{equation*}
\|\omega^n(t,\cdot)\|_{L^1(\Omega)}
\le
\|\omega^n_0\|_{L^1(\Omega)}
=
\|\omega_0\|_{L^1(\Omega)}
\end{equation*}
and, thanks to~\eqref{eq:null_div_K} in \cref{ass:diverg_non-tang}, 
\begin{equation*}
\begin{split}
\|\omega^n(t,\cdot)\|_{L^\infty(\Omega)}
&\le
\exp\left(\int_0^t\|\div v^n(s,\cdot)\|_{L^\infty(\Omega)}\di s\right)
\,
\|\omega^n_0\|_{L^\infty(\Omega)}
\\
&\le
\exp\left(\tfrac Tn\|\div (K\omega_0)\|_{L^\infty(\Omega)}\right)
\,
\|\omega_0\|_{L^\infty(\Omega)}
\le
\exp\left(\tfrac{C_3T}n\|\omega_0\|_{L^1(\Omega)}\right)
\,
\|\omega_0\|_{L^\infty(\Omega)}.
\end{split}
\end{equation*}
Now, for $j\in\set*{2,\dots,n-1}$, let us assume that 
\begin{equation*}
\|\omega^n(\sfrac{jT}{n},\cdot)\|_{L^1(\Omega)}
\le
\|\omega_0\|_{L^1(\Omega)}
\end{equation*}
and 
\begin{equation*}
\|\omega^n(\sfrac{jT}{n},\cdot)\|_{L^\infty(\Omega)}
\le
\exp\left(\tfrac{jC_3T}n\|\omega_0\|_{L^1(\Omega)}\right)\|\omega_0\|_{L^\infty(\Omega)}.
\end{equation*}
Then $\omega^n(t,\cdot)=X^n(t,\cdot)_\#\omega^n(\sfrac{jT}{n},\cdot)$ for all $t\in[\sfrac{jT}{n},(j+1)\sfrac{T}{n}]$, where $[\sfrac{jT}{n},(j+1)\sfrac{T}{n}]\ni t\mapsto X^n(t,\cdot)$ is the flow associated to the time-independent velocity field $v^n(\sfrac{jT}{n},\cdot)=K\omega^n(\sfrac{jT}{n},\cdot)$.
Consequently, we can estimate
\begin{equation*}
\|\omega^n(t,\cdot)\|_{L^1(\Omega)}
\le
\|\omega^n(\sfrac{jT}{n},0)\|_{L^1(\Omega)}
\le
\|\omega_0\|_{L^1(\Omega)}
\end{equation*}
and, thanks to~\eqref{eq:null_div_K} in \cref{ass:diverg_non-tang}, 
\begin{equation*}
\begin{split}
\|\omega^n(t,\cdot)\|_{L^\infty(\Omega)}
&\le
\exp\left(\int_{\sfrac{jT}{n}}^t\|\div v^n(s,\cdot)\|_{L^\infty(\Omega)}\di s\right)
\,
\|\omega^n(\sfrac{jT}{n},\cdot)\|_{L^\infty(\Omega)}
\\
&\le
\exp\left(\tfrac{C_3T}n\|\omega^n(\sfrac{jT}{n},\cdot)\|_{L^1(\Omega)}\right)
\,
\|\omega^n(\sfrac{jT}{n},\cdot)\|_{L^\infty(\Omega)}
\\
&\le
\exp\left(\tfrac{C_3T}n\|\omega_0\|_{L^1(\Omega)}\right)
\,
\exp\left(\tfrac{jC_3T}n\|\omega_0\|_{L^1(\Omega)}\right)\|\omega_0\|_{L^\infty(\Omega)}
\\
&=
\exp\left(\tfrac{(j+1)C_3T}n\|\omega_0\|_{L^1(\Omega)}\right)\|\omega_0\|_{L^\infty(\Omega)}
\end{split}
\end{equation*}
for all $t\in[\sfrac{jT}{n},(j+1)\sfrac{T}{n}]$. 
Therefore, we conclude that 
\begin{equation}
\label{eq:w_n_bounded_L1}
\|\omega^n(t,\cdot)\|_{L^1(\Omega)}
\le
\|\omega_0\|_{L^1(\Omega)}
\end{equation}
and  
\begin{equation}
\label{eq:w_n_bounded_Linfty}
\|\omega^n(t,\cdot)\|_{L^\infty(\Omega)}
\le
\exp\left(C_3T\|\omega_0\|_{L^1(\Omega)}\right)
\,
\|\omega_0\|_{L^\infty(\Omega)}
\end{equation} 
for all $t\in[0,T]$ and $n\in\N$.
In particular, the uniform bounds~\eqref{eq:w_n_bounded_L1} and~\eqref{eq:w_n_bounded_Linfty} in combination with the inequalities~\eqref{eq:v_n_bounded} and~\eqref{eq:v_n_log-Lip} imply that $(v^n)_{n\in\N}$ is uniformly equi-bounded and uniformly equi-continuous (uniformly in time) with modulus of continuity~$\ell$.
Observe that we actually proved that $\omega^n(t,\cdot)=X^n(t,\cdot)_\#\omega_0$ for all $t\in[0,T]$ and $n\in\N$, where $X^n$ is the flow associated to the (piecewise 
constant-in-time) velocity field~$v^n$.
Finally, it is immediate to check that $(\omega^n,v^n)$ solves
\begin{equation}
\label{eq:bongo}
\begin{cases}
\de_t\omega^n+\div(v^n\omega^n)=0
&
\text{in}\
(0,T)\times\Omega,
\\[2mm]
\omega^n|_{t=0}=\omega_0
&
\text{on}\
\Omega,	
\end{cases}
\end{equation}
in the distributional sense for each $n\in\N$.

\smallskip

\textit{Step~2: properties of $(\omega^n)_{n\in\N}$}.
We now claim that the sequence $(\omega^n)_{n\in\N}$ satisfies the hypotheses of \cref{res:aubin-lions}.
Indeed, \eqref{eq:AL_bounded} follows immediately from~\eqref{eq:w_n_bounded_L1}.
By~\eqref{eq:w_n_bounded_Linfty}, we have
\begin{align*}
\sup_{n\in\N}
\|\omega^n\|_{L^\infty([0,T];L^1(A))}
\le
|A|\,
\sup_{n\in\N}
\|\omega^n\|_{L^\infty([0,T];L^\infty(\Omega))}
\le
\exp\left(C_3 T \|\omega_0\|_{L^1(\Omega)}\right)
\,
\|\omega_0\|_{L^\infty(\Omega)}
\,
|A|
\end{align*}
for all $A\subset\Omega$, from which~\eqref{eq:AL_eps-delta} immediately follows. 
Assumption~\eqref{eq:AL_eps-Omega_eps} is empty if $|\Omega|<+\infty$. In order to show~\eqref{eq:AL_eps-Omega_eps} when $|\Omega|=+\infty$, we exploit the representation $\omega^n(t,\cdot)=X^n(t,\cdot)_\#\omega_0$.
Given~$\eps>0$, we choose $r>0$ such that
\begin{equation*}
\int_{\Omega\setminus B_r}|\omega_0|\di x<\eps.
\end{equation*}
Note that, for any $x\in\Omega$, we have
\begin{align*}
\sup_{n\in\N}
\sup_{t\in[0,T]}
\d(X^n(t,x),x)
\le
T\sup_{n\in\N}\|v^n\|_{L^\infty([0,T];\,L^\infty(\Omega;\,\R^2))}
\lesssim
T(\|\omega_0\|_{L^1(\Omega)}
+
\|\omega_0\|_{L^\infty(\Omega)})
\end{align*}
and thus $X^n(t,B_r)\subset B_R$ for all $n\in\N$ and $t\in [0,T]$,
where $R=r+CT$ and $C>0$ is a constant depending only on $\|\omega_0\|_{L^1(\Omega)}$ and $\|\omega_0\|_{L^\infty(\Omega)}$.
Therefore $X^n(t,\cdot)^{-1}(\Omega\setminus B_R) \subset \Omega\setminus B_r$ for all $n\in\N$ and $t\in [0,T]$,
and, consequently, we conclude that
\begin{align*}
\sup_{n\in\N}
\sup_{t\in[0,T]}
\int_{\Omega\setminus B_R}|\omega^n(t,\cdot)|\di x
&\le
\sup_{n\in\N}
\sup_{t\in[0,T]}
\int_{X^n(t,\cdot)^{-1}(\Omega\setminus B_R)}|\omega_0|\di x
\le
\int_{\Omega\setminus B_r}|\omega_0|\di x
<
\eps,
\end{align*}
proving~\eqref{eq:AL_eps-Omega_eps}.
Finally, using~\eqref{eq:w_n_bounded_L1}, \eqref{eq:w_n_bounded_Linfty} and~\eqref{eq:bongo}, for each $n\in\N$ and $\phi\in C^1_c(\Omega)$  the function
\begin{equation*}
t
\mapsto
\int_\Omega\omega^n(t,\cdot)\,\phi\di x
\in
\mathrm{AC}([0,T];\R)
\end{equation*}
satisfies
\begin{equation}
\label{eq:omega_n_equi-Lip}
\bigg|
\,
\frac{\di}{\di t}\int_\Omega\omega^n(t,\cdot)\,\phi\di x
\,
\bigg|
=
\bigg|
\,
\int_\Omega\omega^n(t,\cdot)\, v^n(t,\cdot)\cdot\nabla\phi \di x
\,
\bigg|
\le 
C\,\|\nabla\phi\|_{L^\infty(\Omega;\,\R^2)}
\end{equation}
for a.e.\ $t\in[0,T]$, where $C>0$ is a constant depending on~$\|\omega_0\|_{L^1(\Omega)}$ and~$\|\omega_0\|_{L^\infty(\Omega)}$ only, proving the validity of~\eqref{eq:AL_equi-cont}.

\smallskip

\textit{Step~3: passage to the limit}.
Thanks to Step~2, we can apply \cref{res:aubin-lions} to the sequence $(\omega^n)_{n\in\N}$ and find a subsequence $(\omega^{n_k})_{k\in\N}$ such that
\begin{equation}
\label{eq:omega_n_weak_converg_omega}
\lim_{k\to+\infty}
\sup_{t\in[0,T]}
\bigg|
\int_\Omega \omega^{n_k}(t,\cdot) \, \phi \di x 
-
\int_\Omega \omega(t,\cdot)\, \phi \di x
\,\bigg|
=0
\end{equation}
for each $\phi\in L^\infty(\Omega)$, for some 
\begin{equation*}
\omega\in L^\infty([0,T];L^1(\Omega))
\cap
C([0,T];L^1(\Omega)-w^\star).
\end{equation*} 
From~\eqref{eq:omega_n_weak_converg_omega}, we see that
\begin{equation*}
\|\omega\|_{L^\infty([0,T];\,L^1(\Omega))}
\le
\sup_{n\in\N}
\|\omega^n\|_{L^\infty([0,T];\,L^1(\Omega))}
\end{equation*}
and
\begin{equation*}
\|\omega\|_{L^\infty([0,T];\,L^\infty(\Omega))}
\le
\sup_{n\in\N}
\|\omega^n\|_{L^\infty([0,T];\,L^\infty(\Omega))},
\end{equation*}
proving~\eqref{eq:existence_bounded_L1} and~\eqref{eq:existence_bounded_Linfty} in virtue of~\eqref{eq:w_n_bounded_L1} and~\eqref{eq:w_n_bounded_Linfty} respectively.
Now we set $\tilde v^n=K\omega^n$ for all $n\in\N$ and
\begin{equation}
\label{eq:salamandra}
v=K\omega\in L^\infty([0,T];C_b(\Omega;\R^2)).
\end{equation}
We observe that, for $\phi\in L^1(\Omega)\cap L^\infty(\Omega)$, we can write
\begin{equation}
\label{eq:K_omega_n-ibp}
\begin{split}
\int_\Omega \phi\,\tilde v^n(t,\cdot)\di x
&=
\int_\Omega \phi\,K\omega^n(t,\cdot)\di x
\\
&=
\int_\Omega \phi(x)\int_\Omega k(x,y)\,\omega^n(t,y)\di y\di x
\\
&=
\int_\Omega \omega^n(t,y)\int_\Omega k(x,y)\,\phi(x)\di x\di y
=
\int_\Omega \omega^n(t,\cdot)\,\tilde K\phi\di y
\end{split}
\end{equation}
for a.e.\ $t\in[0,T]$ and $n\in\N$ by the Fubini Theorem and by~\eqref{eq:Kw_bounded} in \cref{res:holder_bounded},
where we have set 
\begin{equation}
\label{eq:simmetrico}
\tilde K \phi (y) = \int_\Omega k(x,y) \,\varphi(x) \di x,
\quad
x\in\Omega,
\end{equation}
(we do not assume~$k$ to be symmetric in the two variables, but note that the right-hand sides of the estimates~\eqref{eq:k_kernel_bound} and~\eqref{eq:k_kernel_oscillation_bound} in \cref{ass:est} are indeed symmetric). 
In a similar way, we also have
\begin{equation*}
\int_\Omega \phi\,v(t,\cdot)\di x
=
\int_\Omega\omega(t,\cdot)\,\tilde K\phi\di x
\end{equation*}
for a.e.\ $t\in[0,T]$. 
Because of~\eqref{eq:omega_n_weak_converg_omega}, we can thus write
\begin{equation}
\label{eq:ramarro}
\lim_{k\to+\infty}
\int_\Omega \tilde v^{n_k}(t,\cdot) \, \phi \di x 
=
\int_\Omega v(t,\cdot)\, \phi \di x
\end{equation} 
for a.e.\ $t\in[0,T]$, whenever $\phi\in L^1(\Omega)\cap L^\infty(\Omega)$ is given.
In addition, arguing exactly as in Step~1 of the proof of \cref{res:aubin-lions}, given $\phi\in L^\infty(\Omega)$ and $\eps>0$, we can find $\delta>0$ such that 
\begin{equation}
\label{eq:lavagna}
s,t\in[0,T],\
|s-t|<\delta
\implies
\sup_{n\in\N}
\bigg|
\int_\Omega\omega^n(s,\cdot)\,\phi\di x
-
\int_\Omega\omega^n(t,\cdot)\,\phi\di x
\,\bigg|<\eps.
\end{equation} 
Therefore, given $\phi\in L^1(\Omega)\cap L^\infty(\Omega)$ and $\eps>0$, for each $t\in[0,T]$ we can find $\tau_n(t)\in[0,T]$ (defined according to the construction performed in Step~1) such that $\tau_n(t)\le t\le\tau_n(t)+\sfrac1n$ and $v^n(t,\cdot)=v^n(\tau_n(t),\cdot)=K\omega^n(\tau_n(t),\cdot)$, so that   
\begin{align*}
\bigg|
\int_\Omega \phi\, v^n(t,\cdot)\di x
-
\int_\Omega\phi\,\tilde v^n(t,\cdot)\di x
\,\bigg|
&=
\bigg|
\int_\Omega \phi\, v^n(\tau_n(t),\cdot)\di x
-
\int_\Omega\phi\,\tilde v^n(t,\cdot)\di x
\,\bigg|
\\
&=
\bigg|
\int_\Omega\omega^n(\tau_n(t),\cdot)\,\tilde K\phi\di x
-
\int_\Omega\omega^n(t,\cdot)\,\tilde K\phi\di x
\,\bigg|<\eps
\end{align*}
for all $n>\sfrac1\delta$, where $\delta>0$ is given by~\eqref{eq:lavagna} applied to $\tilde K\phi\in L^\infty(\Omega)$. 
Consequently,
because of~\eqref{eq:ramarro}, we get that
\begin{equation}
\label{eq:lucertola}
\lim_{k\to+\infty}
\int_\Omega  v^{n_k}(t,\cdot) \, \phi \di x 
=
\int_\Omega v(t,\cdot)\, \phi \di x
\end{equation}
for a.e.\ $t\in[0,T]$, whenever $\phi\in L^1(\Omega)\cap L^\infty(\Omega)$ is given.   
Now, by Step~1, the sequence $(v^n)_{n\in\N}$ is uniformly equi-bounded and uniformly equi-continuous (uniformly in time) with modulus of continuity~$\ell$.
Thus, by the Arzel\`a--Ascoli Theorem, for a.e.\ $t\in[0,T]$ fixed, we can find a further subsequence $\left(v^{n_{k_j}(t)}\right)_{j\in\N}$ (possibly depending on the chosen time~$t$) and $\tilde v(t,\cdot)\in C_b(\Omega;\R^2)$ such that
\begin{equation}
\label{eq:rettile}
\lim_{k\to+\infty}
\left\|v^{n_{k_j(t)}}(t,\cdot)-\tilde v(t,\cdot)\right\|_{L^\infty_{\loc}(\Omega;\,\R^2)}
=0.
\end{equation}
By combining~\eqref{eq:lucertola} and~\eqref{eq:rettile}, we get that $\tilde v(t,\cdot)=v(t,\cdot)$ and thus 
\begin{equation}
\label{eq:v_n_strong_converg_v}
\lim_{k\to+\infty}
\left\|v^{n_k}(t,\cdot)- v(t,\cdot)\right\|_{L^\infty_{\loc}(\Omega;\,\R^2)}
=0
\end{equation}
for a.e.\ $t\in[0,T]$, that is, the subsequence $(v^{n_k})_{k\in\N}$ is strongly convergent in space independently on the chosen time $t\in[0,T]$.
Hence, we obtain that
\begin{equation}
\label{eq:v_limit_bounded}
\|v(t,\cdot)\|_{L^\infty(\Omega;\,\R^2)}
\lesssim
\|\omega_0\|_{L^1(\Omega)}
+
\|\omega_0\|_{L^\infty(\Omega)}
\end{equation}
and
\begin{equation}
\label{eq:v_limit_log-Lip}
|v(t,x)-v(t,y)|
\lesssim
(\|\omega_0\|_{L^1(\Omega)}
+
\|\omega_0\|_{L^\infty(\Omega)})
\,
\ell(\d(x,y)),
\quad
\forall x,y\in\Omega,
\end{equation}
for a.e.\ $t\in[0,T]$, proving~\eqref{eq:existence_bounded_v_bounded} and~\eqref{eq:existence_bounded_v_log-Lip} respectively. 
Combining~\eqref{eq:omega_n_weak_converg_omega} with~\eqref{eq:v_n_strong_converg_v}, we get that
\begin{equation*}
\lim_{k\to+\infty}
\int_\Omega\omega^{n_k}(t,\cdot) v^{n_k}(t,\cdot)\,\phi\di x
=
\int_\Omega\omega(t,\cdot) v(t,\cdot) \,\phi\di x
\end{equation*} 
for a.e.~$t\in[0,T]$ and all $\phi\in C_c(\Omega)$.
Consequently, passing to the limit as $k\to+\infty$ along the subsequence $(\omega^{n_k},v^{n_k})_{k\in\N}$ in the distributional formulation of~\eqref{eq:bongo}, we conclude that $(\omega,v)$ solves
\begin{equation*}
\begin{cases}
\de_t\omega+\div(v\omega)=0
&
\text{in}\
(0,T)\times\Omega,
\\[2mm]
\omega|_{t=0}=\omega_0
&
\text{on}\
\Omega,	
\end{cases}
\end{equation*}
in the distributional sense, with $v=K\omega$ according to the definition made in~\eqref{eq:salamandra}.

\smallskip

\emph{Step~4: $(\omega,v)$ is Lagrangian}.
We finally prove that the solution $(\omega,v)$ is Lagrangian, i.e.\ $\omega(t,\cdot)=X(t,\cdot)_\#\omega_0$, where $X$ is the flow associated to~$v$.
Note that $X$ is well defined and unique by the classical theory of ODEs, thanks to~\eqref{eq:v_limit_bounded} and~\eqref{eq:v_limit_log-Lip}.

Since $(v^{n_k})_{k\in\N}$ is uniformly equi-bounded and uniformly equi-continuous (uniformly in time), and since the modulus of continuity $\ell$ satisfies the Osgood condition,  the corresponding sequence of flows $(X^{n_k})_{k\in\N}$ is locally uniformly equi-bounded and equi-continuous (uniformly in time) as well. 
Thus, again by the Arzel\`a--Ascoli Theorem (possibly passing to a further subsequence, which we do not relabel), we have that 
\begin{equation*}
\lim_{k\to+\infty}
\|X^{n_k}-X\|_{L^\infty([0,T];\,L^\infty_{\loc}(\Omega;\,\Omega))} = 0
\end{equation*} 
for some $X\in L^\infty([0,T];L^\infty_{\loc}(\Omega;\Omega))$. 
Passing to the limit as $k\to+\infty$ in the expression
\begin{equation*}
X^{n_k}(t,x)=x+\int_0^t v^{n_k}(s,X^{n_k}(s,x))\di s,
\end{equation*}
we get that
\begin{equation*}
X(t,x)=x+\int_0^t v(s,X(s,x))\di s
\end{equation*}
for $x\in\Omega$ and $t\in[0,T]$, so that $X$ must be the (unique) flow associated to $v$.
Therefore
\begin{align*}
\lim_{k\to+\infty}
\int_\Omega \omega^{n_k}(t,\cdot) \, \phi \di x 
=
\lim_{k\to+\infty}
\int_\Omega \omega_0 \, \phi(X^{n_k}(t,\cdot)) \di x 
=
\int_\Omega \omega_0 \, \phi(X(t,\cdot)) \di x 
\end{align*}
for a.e.\ $t\in[0,T]$ and all $\phi\in L^\infty(\Omega)$ by the Dominated Convergence Theorem, and the claimed representation of $\omega$ follows from~\eqref{eq:omega_n_weak_converg_omega}.
The proof is complete. 
\end{proof}

We are now ready to prove the first part of \cref{res:main_existence}, which we recall in the next statement.

\begin{theorem}[Existence in $L^1\cap L^p_{\ul}$ for $p>2$]
\label{res:existence_p_ul}
Let Assumptions~\ref{ass:est} and~\ref{ass:diverg_non-tang} be in force and let $p\in(2,+\infty)$.  
Then there exists a weak solution $(\omega,v)$ of~\eqref{eq:Euler} with vorticity in $L^1\cap L^p_{\ul}$ starting from the initial datum $\omega_0\in L^1(\Omega)\cap L^p_{\ul}(\Omega)$ such that
\begin{equation}
\label{eq:existence_p_ul_L1}
\|\omega\|_{L^\infty([0,T];\,L^1(\Omega))}
\le
\|\omega_0\|_{L^1(\Omega)},
\end{equation}
\begin{equation}
\label{eq:existence_p_ul_Lp_ul}
\|\omega\|_{L^\infty([0,T];\,L^p_{\ul}(\Omega))}
\le
C,
\end{equation} 
\begin{equation}
\label{eq:existence_p_ul_v_bounded}
\|v\|_{L^\infty([0,T];\,L^\infty(\Omega;\,\R^2))}
\le
C,
\end{equation}
and
\begin{equation}
\label{eq:existence_p_ul_v_holder}
|v(t,x)-v(t,y)|
\le
\max\set*{1,\tfrac1{p-2}}
\,
Cp
\,
\d(x,y)^{1-\sfrac 2p},
\quad
\text{for all $x,y\in\Omega$ and a.e.~$t\in [0,T]$,}
\end{equation}
for all $T\in(0,+\infty)$, where $C>0$ only depends on $T$, $p$, $\|\omega_0\|_{L^1(\Omega)}$ and~$\|\omega_0\|_{L^p_{\ul}(\Omega)}$.
\end{theorem}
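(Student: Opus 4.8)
The plan is to obtain the solution as a limit of the bounded solutions furnished by \cref{res:existence_bounded}, applied to truncations of the initial datum. Concretely, I would set $\omega_0^k=\omega_0\,\mathbf 1_{\{|\omega_0|\le k\}}$, so that $\omega_0^k\in L^1(\Omega)\cap L^\infty(\Omega)$, $|\omega_0^k|\le|\omega_0|$ and $\omega_0^k\to\omega_0$ in $L^1(\Omega)$. \cref{res:existence_bounded} then provides, for each $k\in\N$, a \emph{Lagrangian} weak solution $(\omega^k,v^k)$ with vorticity in $L^1\cap L^\infty$ starting from $\omega_0^k$, so that $\omega^k(t,\cdot)=X^k(t,\cdot)_\#\omega_0^k$ and the bounds \eqref{eq:existence_bounded_L1}--\eqref{eq:existence_bounded_v_log-Lip} hold with $\omega_0$ replaced by $\omega_0^k$. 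In particular $\|\omega^k(t,\cdot)\|_{L^1(\Omega)}\le\|\omega_0\|_{L^1(\Omega)}$ uniformly in $k$ and $t\in[0,T]$, which already yields \eqref{eq:existence_p_ul_L1} at the level of the approximations. Note that the velocity bound \eqref{eq:existence_bounded_v_bounded} cannot be used directly, since it would blow up as $k\to+\infty$; instead I would estimate the velocities through \cref{res:holder_bounded} (with $q=1$) in terms of the $L^1$ and $L^p_{\ul}$ norms of the vorticity.

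The heart of the matter, and the place where $p>2$ is genuinely used, is a \emph{uniform} bound on $\|\omega^k(t,\cdot)\|_{L^p_{\ul}(\Omega)}$. Exploiting the Lagrangian representation together with the bounded-divergence estimate \eqref{eq:null_div_K}, the Jacobian $J^k=\det DX^k$ of the flow satisfies $e^{-C_3T\|\omega_0\|_{L^1(\Omega)}}\le J^k\le e^{C_3T\|\omega_0\|_{L^1(\Omega)}}$, and a change of variables gives
\[
\int_{\Omega\cap B_1(x_0)}|\omega^k(t,y)|^p\di y
\le
e^{(p-1)C_3T\|\omega_0\|_{L^1(\Omega)}}
\int_{X^k(t,\cdot)^{-1}(\Omega\cap B_1(x_0))}|\omega_0^k|^p\di x
\]
for every $x_0\in\Omega$. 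For each fixed $k$ the velocity $v^k$ is bounded, so the maximal displacement $D_k=T\|v^k\|_{L^\infty([0,T]\times\Omega)}$ is finite and $X^k(t,\cdot)^{-1}(B_1(x_0))\subset B_{1+D_k}(x_0)$; the change-of-radius inequality \eqref{eq:Lp_ul_change_radius} together with $|\omega_0^k|\le|\omega_0|$ then yields $A_k\lesssim(1+D_k)^{\sfrac2p}\,\|\omega_0\|_{L^p_{\ul}(\Omega)}$, where $A_k=\sup_{t\in[0,T]}\|\omega^k(t,\cdot)\|_{L^p_{\ul}(\Omega)}$. On the other hand, \cref{res:holder_bounded} bounds $\|v^k\|_{L^\infty}$, and hence $D_k$, linearly in $\|\omega_0\|_{L^1(\Omega)}+A_k$. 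Combining the two estimates produces an algebraic inequality of the form $A_k\le C_0\,(1+c(\|\omega_0\|_{L^1(\Omega)}+A_k))^{\sfrac2p}\,\|\omega_0\|_{L^p_{\ul}(\Omega)}$. Since $\sfrac2p<1$ makes the right-hand side grow \emph{sublinearly} in $A_k$, this self-improving inequality (with $A_k$ finite) forces $A_k\le C$ with $C$ independent of $k$, giving \eqref{eq:existence_p_ul_Lp_ul}. Feeding this back into \cref{res:holder_bounded} yields the uniform velocity estimates \eqref{eq:existence_p_ul_v_bounded} and \eqref{eq:existence_p_ul_v_holder} for the approximations.

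With these uniform bounds in hand, I would pass to the limit exactly as in Steps~2--3 of the proof of \cref{res:existence_bounded}. The bound \eqref{eq:AL_bounded} follows from the uniform $L^1$ estimate, while the uniform $L^p_{\ul}$ bound provides the equi-integrability required by \eqref{eq:AL_eps-delta}, since on any bounded set $E$ one controls $\int_E|\omega^k|$ by $|E|^{1-\sfrac1p}$ times a uniform constant; the tightness \eqref{eq:AL_eps-Omega_eps} and the time-equi-continuity \eqref{eq:AL_equi-cont} then follow verbatim from the Lagrangian representation and the uniform velocity bound. \cref{res:aubin-lions} yields a subsequence with $\omega^{k_j}\to\omega$ in the sense of \eqref{eq:omega_n_weak_converg_omega}, and the uniform boundedness and uniform H\"older-$(1-\sfrac2p)$ continuity of $(v^{k_j})$ give, via the symmetry identity \eqref{eq:K_omega_n-ibp} and the Arzel\`a--Ascoli Theorem, the locally uniform convergence $v^{k_j}\to v=K\omega$. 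Combining the weak convergence of the vorticities with the strong local convergence of the velocities, the products $\omega^{k_j}v^{k_j}$ converge to $\omega v$, which lets me pass to the limit in the weak formulation of the continuity equation and conclude that $(\omega,v)$ is a weak solution; the bounds \eqref{eq:existence_p_ul_L1}--\eqref{eq:existence_p_ul_v_holder} are inherited by lower semicontinuity and by \cref{res:holder_bounded} applied to the limit $\omega$.

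The main obstacle is precisely the uniform $L^p_{\ul}$ estimate: the velocity bound depends on the very quantity one is trying to control, so the estimate can be closed only thanks to the sublinear exponent $\sfrac2p<1$ coming from the assumption $p>2$ (this is also exactly why the statement fails to be available for $p=2$). A secondary technical point is the rigorous justification of the change-of-variables and Jacobian bound for the merely log-Lipschitz flow $X^k$, which is legitimate because $X^k(t,\cdot)$ is a homeomorphism of $\Omega$ whose Jacobian is controlled by the $L^\infty$ bound on $\div(K\omega^k)$ provided by \eqref{eq:null_div_K}.
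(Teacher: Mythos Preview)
Your approach is essentially the paper's: truncate the datum, invoke \cref{res:existence_bounded}, close a uniform $L^p_{\ul}$ bound via the sublinear exponent $\sfrac2p<1$, and pass to the limit through \cref{res:aubin-lions} and Arzel\`a--Ascoli. The one genuine difference is how the uniform bound is closed. The paper tracks the time-dependent displacement
\[
R_n(t)=\int_0^t\|v^n(s,\cdot)\|_{L^\infty(\Omega;\,\R^2)}\di s
\]
and derives the differential inequality $R_n'(t)\lesssim C\big(1+\|\omega_0\|_{L^p_{\ul}}(1+R_n(t))^{\sfrac2p}\big)$, which it integrates by comparison. You instead take the supremum over $[0,T]$ from the outset and close the \emph{algebraic} inequality $A_k\le C_0\,(1+c\,A_k)^{\sfrac2p}$; since $A_k$ is a priori finite (each $\omega^k$ lives in $L^\infty$) and $\sfrac2p<1$, this forces $A_k\le C$ uniformly. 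Both arguments exploit the same sublinearity, and yours is a shade more direct for fixed~$p$. The paper's time-resolved formulation, however, adapts more cleanly to the subsequent $Y^\Theta_{\ul}$ result (\cref{res:existence_Theta}), where one needs the constant to be uniform in~$p$: there the differential inequality becomes linear in $R_n$ and Gr\"onwall closes it, whereas your sup-based inequality would need an additional argument to achieve $p$-uniformity. The remaining steps (equi-integrability from the $L^p_{\ul}$ bound combined with tightness, time equi-continuity from the transport equation, identification $v=K\omega$ via~\eqref{eq:K_omega_n-ibp}, convergence of the product $\omega^{n_k}v^{n_k}$) match the paper's Steps~3--4 essentially verbatim.
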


\begin{proof}
Let $T\in(0,+\infty)$ and $\omega_0\in L^1(\Omega)\cap L^p_{\ul}(\Omega)$ be fixed and define $v_0=K\omega_0$.

\smallskip

\textit{Step~1: construction of $(\omega^n,v^n)_{n\in\N}$}.
For each $n\in\N$, we let $\omega^n_0\in L^1(\Omega)\cap L^\infty(\Omega)$ be the truncation $\omega_0^n=\max\set*{-n,\min\set*{n,\omega_0}}$.
We note that 
\begin{equation*}
\|\omega_0^n\|_{L^1(\Omega)}
\le
\|\omega_0\|_{L^1(\Omega)},
\quad
\text{for all}\ n\in\N,
\end{equation*}
and that
\begin{equation*}
\lim_{n\to+\infty}\|\omega_0^n-\omega_0\|_{L^1(\Omega)}=0.
\end{equation*}
Moreover, we also have that 
\begin{equation}
\label{eq:trunc_omega_0_control_p_ul}
\|\omega_0^n\|_{L^p_{\ul}(\Omega)} \leq \|\omega_0\|_{L^p_{\ul}(\Omega)},
\quad
\text{for all}\ n\in\N.
\end{equation}
For each $n\in\N$, we let $(\omega^n,v^n)_{n\in\N}$ be the Lagrangian weak solution of~\eqref{eq:Euler} in $L^1\cap L^\infty$ with initial datum $\omega^n_0$ given by \cref{res:existence_bounded}.
In particular, we have that 
\begin{equation}
\label{eq:trunc_w_n_L1}
\sup_{n\in\N}
\|\omega^n\|_{L^\infty([0,T];\,L^1(\Omega))}
\le
\|\omega_0\|_{L^1(\Omega)}.
\end{equation}

\smallskip

\textit{Step~2: uniform estimates for $(\omega^n,v^n)_{n\in\N}$}.
Now let $n\in\N$ be fixed.
Since $v^n(t,\cdot)=K\omega^n(t,\cdot)$ for a.e.\ $t\in[0,T]$, by~\eqref{eq:Kw_bounded} in \cref{res:holder_bounded} and by~\eqref{eq:trunc_w_n_L1} in Step~1 we have that
\begin{equation}
\label{eq:trunc_v_n_p}
\begin{split}
\|v^n(t,\cdot)\|_{L^\infty(\Omega;\,\R^2)}
&\lesssim
\max\set*{1,\tfrac1{p-2}}\,
\left(
\|\omega^n(t,\cdot)\|_{L^1(\Omega)}
+
\|\omega^n(t,\cdot)\|_{L^p_{\ul}(\Omega)}
\right)
\\	
&\lesssim
\max\set*{1,\tfrac1{p-2}}\,
\left(
\|\omega_0\|_{L^1(\Omega)}
+
\|\omega^n(t,\cdot)\|_{L^p_{\ul}(\Omega)}
\right)
\\	
&\lesssim
\max\set*{1,\tfrac1{p-2}}
\,
\max\set*{1,\|\omega_0\|_{L^1(\Omega)}}
\,
\left(
1
+
\|\omega^n(t,\cdot)\|_{L^p_{\ul}(\Omega)}
\right)
\end{split}
\end{equation}
for a.e.\ $t\in[0,T]$.
We now consider the function
\begin{equation}
\label{eq:trunc_v_n_def_Rn}
R_n(t)
=
\int_0^t\|v^n(s,\cdot)\|_{L^\infty(\Omega;\,\R^2)}\di s
\end{equation}
defined for $t\in[0,T]$. 
Let $X^n$ be the flow associated to the velocity $v^n$.
Since 
\begin{equation*}
\d(X_t^n(x),x)
\le
R_n(t)
\quad
\text{for all}\ x\in\Omega, 
\end{equation*}
by exploiting~\eqref{eq:trunc_w_n_L1} in Step~1 and \eqref{eq:null_div_K} we can estimate
\begin{equation}
\label{eq:trunc_omega_n_p}
\begin{split}
\|\omega^n(t,\cdot)\|_{L^p_{\ul}(\Omega)}
&\le
\exp\left(\tfrac T{p'}\|\div v^n\|_{L^\infty([0,T];\,L^\infty(\Omega))}\right)
\|\omega_0\|_{L^p_{\ul,1+R_n(t)}(\Omega)}
\\
&\le
\exp\left(\tfrac T{p'}\,C_3\|\omega^n\|_{L^\infty([0,T];\,L^1(\Omega))}\right)
\|\omega_0\|_{L^p_{\ul,1+R_n(t)}(\Omega)}
\\
&\le
\exp\left(\tfrac T{p'}\,C_3\|\omega_0\|_{L^1(\Omega)}\right)
\|\omega_0\|_{L^p_{\ul,1+R_n(t)}(\Omega)}
\end{split}
\end{equation}
for all $t\in[0,T]$, where $p'=\sfrac{p}{(p-1)}\in(1,2)$.
By~\eqref{eq:Lp_ul_change_radius} we have that
\begin{equation*}
\|\omega_0\|_{L^p_{\ul,1+R_n(t)}(\Omega)}
\lesssim
(1+R_n(t))^{\sfrac 2p}
\,
\|\omega_0\|_{L^p_{\ul}(\Omega)},
\end{equation*}
and thus
\begin{equation}
\label{eq:trunc_omega_n_change_radius}
\|\omega^n(t,\cdot)\|_{L^p_{\ul}(\Omega)}
\le
\exp\left(\tfrac T{p'}\,C_3\|\omega_0\|_{L^1(\Omega)}\right)
\,
(1+R_n(t))^{\sfrac 2p}
\,
\|\omega_0\|_{L^p_{\ul}(\Omega)}
\end{equation}
for all $t\in[0,T]$. 
Therefore, by combining~\eqref{eq:trunc_v_n_p}, \eqref{eq:trunc_v_n_def_Rn}, \eqref{eq:trunc_omega_n_p} and~\eqref{eq:trunc_omega_n_change_radius}, we get
\begin{equation}
\label{eq:key_est_Rn}
R_n'(t)
\lesssim
C
\left(
1
+
\|\omega_0\|_{L^p_{\ul}(\Omega)}
\,
(1+R_n(t))^{\sfrac 2p}
\right)
\end{equation}
for a.e.\ $t\in(0,T)$, where
\begin{equation*}
C
=
T\max\set*{1,\tfrac1{p-2}}
\,
\max\set*{1,\|\omega_0\|_{L^1(\Omega)}}
\,
\exp\left(\tfrac T{p'}\,C_3\|\omega_0\|_{L^1(\Omega)}\right).	
\end{equation*}
From inequality~\eqref{eq:key_est_Rn} we thus get that
\begin{equation}
\label{eq:magic_constant_p}
R_n(t)
\le
C(p,T,\|\omega_0\|_{L^1(\Omega)},\|\omega_0\|_{L^p_{\ul}(\Omega)})
\end{equation}
for all $t\in[0,T]$, where the constant appearing in the right-hand side does not depend on the choice of~$n\in\N$.
Consequently, by~\eqref{eq:trunc_omega_n_change_radius} we get that
\begin{equation}
\label{eq:trunc_w_n_Lp_ul}
\sup_{n\in\N}
\|\omega^n\|_{L^\infty([0,T];\,L^p_{\ul}(\Omega))}
\le
C(p,T,\|\omega_0\|_{L^1(\Omega)},\|\omega_0\|_{L^p_{\ul}(\Omega)})
\end{equation}
and then, using \eqref{eq:trunc_v_n_p}, we deduce
\begin{equation}
\label{eq:trunc_v_n_unif_est_ul_p}
\sup_{n\in\N}
\|v^n\|_{L^\infty([0,T];\,L^\infty(\Omega;\,\R^2))}
\le
C(p,T,\|\omega_0\|_{L^1(\Omega)},\|\omega_0\|_{L^p_{\ul}(\Omega)}).
\end{equation}

\smallskip

\textit{Step~3: properties of $(\omega^n)_{n\in\N}$}.
We now claim that the sequence $(\omega^n)_{n\in\N}$ satisfies the hypotheses of \cref{res:aubin-lions}.
Indeed, property~\eqref{eq:AL_bounded} follows from~\eqref{eq:trunc_w_n_L1} in Step~1. 
Property~\eqref{eq:AL_eps-Omega_eps} can be proved as in Step~2 of the proof of \cref{res:existence_bounded}, thanks to the uniform bound~\eqref{eq:trunc_v_n_unif_est_ul_p} proved in Step~2.
In particular, for each $\eps>0$ we can find $R>0$ such that
\begin{equation}
\label{eq:trunc_w_n_ball_eps}
\sup_{n\in\N}
\sup_{t\in[0,T]}
\int_{\Omega\setminus B_R}|\omega^n(t,\cdot)|\di x
<
\eps.
\end{equation}
Also property~\eqref{eq:AL_equi-cont} can be proved as in Step~2 of the proof of \cref{res:existence_bounded}, again thanks to the uniform bound in~\eqref{eq:trunc_w_n_L1} and \eqref{eq:trunc_v_n_p} and since $(\omega^n,v^n)$ solves~\eqref{eq:Euler} for each~$n\in\N$.
We are thus left to show property~\eqref{eq:AL_eps-delta}.
To this aim, let $\eps>0$ and $A\subset\Omega$.
Letting $R>0$ be the radius given by~\eqref{eq:trunc_w_n_ball_eps}, we can write
\begin{equation}
\label{eq:trunc_w_n_eps-delta_1}
\begin{split}
\int_A|\omega^n(t,\cdot)|\di x
&=
\int_{A\cap B_R}|\omega^n(t,\cdot)|\di x
+
\int_{A\setminus B_R}|\omega^n(t,\cdot)|\di x
\le
\int_{A\cap B_R}|\omega^n(t,\cdot)|\di x
+
\eps
\end{split}
\end{equation}
for all $t\in[0,T]$ and $n\in\N$. 
Moreover, thanks to the uniform bound~\eqref{eq:trunc_w_n_Lp_ul} and the inequality~\eqref{eq:Lp_ul_change_radius}, we can estimate
\begin{equation}
\label{eq:trunc_w_n_eps-delta_2}
\begin{split}
\sup_{n\in\N}
\sup_{t\in[0,T]}
\int_{A\cap B_R}|\omega^n(t,\cdot)|\di x
&\le
|A|^{\sfrac{1}{p'}}
\sup_{n\in\N}
\|\omega^n\|_{L^\infty([0,T];\,L^p(B_R))}
\\
&\le
|A|^{\sfrac{1}{p'}}
\sup_{n\in\N}
\|\omega^n\|_{L^\infty([0,T];\,L^p_{\ul,R}(\Omega))}
\\
&\lesssim
R^{\sfrac{2}{p}}\,
|A|^{\sfrac{1}{p'}}
\sup_{n\in\N}
\|\omega^n\|_{L^\infty([0,T];\,L^p_{\ul}(\Omega))}
\\
&\le
R^{\sfrac{2}{p}}\,
C(p,T,\|\omega_0\|_{L^1(\Omega)},\|\omega_0\|_{L^p_{\ul,1}(\Omega)})
\,
|A|^{\sfrac{1}{p'}},
\end{split}
\end{equation}
where the implicit (geometric) constant in the intermediate inequality does not depend on~$\eps$, and as usual $p'=\sfrac{p}{(p-1)}$. 
Property~\eqref{eq:AL_eps-delta} thus follows by combining~\eqref{eq:trunc_w_n_eps-delta_1} and~\eqref{eq:trunc_w_n_eps-delta_2}.

\smallskip

\textit{Step~4: construction of $(\omega,v)$}.
Thanks to Step~3, we can apply \cref{res:aubin-lions} to the sequence $(\omega^n)_{n\in\N}$ and find a subsequence $(\omega^{n_k})_{k\in\N}$ such that
\begin{equation}
\label{eq:trunc_omega_n_weak_converg_omega}
\lim_{k\to+\infty}
\sup_{t\in[0,T]}
\bigg|
\int_\Omega \omega^{n_k}(t,\cdot) \, \phi \di x 
-
\int_\Omega \omega(t,\cdot)\, \phi \di x
\,\bigg|
=0
\end{equation}
for all $\phi\in L^\infty(\Omega)$, for some
\begin{equation*}
\omega\in L^\infty([0,T];L^1(\Omega))
\cap
C([0,T];L^1(\Omega)-w^\star).	
\end{equation*} 
From~\eqref{eq:trunc_omega_n_weak_converg_omega} it follows that
\begin{equation}
\label{eq:limit_omega_trunc_1}
\|\omega\|_{L^\infty([0,T];\,L^1(\Omega))}
\le
\sup_{n\in\N}
\|\omega^n\|_{L^\infty([0,T];\,L^1(\Omega))}
\end{equation}
and
\begin{equation}
\label{eq:limit_omega_trunc_p_ul}
\|\omega\|_{L^\infty([0,T];\,L^p_{\ul}(\Omega))}
\le
\sup_{n\in\N}
\|\omega^n\|_{L^\infty([0,T];\,L^p_{\ul}(\Omega))},
\end{equation}
proving~\eqref{eq:existence_p_ul_L1} and~\eqref{eq:existence_p_ul_Lp_ul} in virtue of~\eqref{eq:trunc_w_n_L1} and~\eqref{eq:trunc_w_n_Lp_ul} respectively.
Now, since 
\begin{equation}
\label{eq:trunc_v_n_is_K_trunc_omega_n}
v^n(t,\cdot)=K\omega^n(t,\cdot)
\quad
\text{for a.e.\ $t\in[0,T]$ and all $n\in\N$}, 
\end{equation}
by~\eqref{eq:trunc_omega_n_weak_converg_omega} and the Fubini Theorem we get that
\begin{equation}
\label{eq:trunc_v_n_full_limit}
\lim_{k\to+\infty}
\int_\Omega v^{n_k}(t,\cdot)\,\phi \di x
=
\lim_{k\to+\infty}
\int_\Omega \omega^{n_k}(t,\cdot)\,\tilde K\phi \di x
=
\int_\Omega \omega(t,\cdot)\,\tilde K\phi \di x
\end{equation}
for a.e.\ $t\in[0,T]$ and all $\phi\in C_c(\Omega)$, where $\tilde K$ is as in~\eqref{eq:simmetrico}. 
From Step~2, we already know that the sequence $(v^n)_{n\in\N}$ is uniformly equi-bounded (uniformly in time).
By recalling~\eqref{eq:trunc_v_n_is_K_trunc_omega_n} and by combining~\eqref{eq:trunc_w_n_L1} and~\eqref{eq:trunc_w_n_Lp_ul} with~\eqref{eq:Kw_holder} of \cref{res:holder_bounded}, we get that the sequence $(v^n)_{n\in\N}$ is also uniformly equi-H\"older-continuous (uniformly in time).
Therefore, by the Arzel\`a--Ascoli Theorem, for a.e.\ $t\in[0,T]$ we can find a subsequence $(n_{k_j(t)})_{j\in\N}$ (which a priori may depend on the chosen~$t$) and a function 
$v(t,\cdot) \in L^\infty(\Omega;\R^2)$ such that 
\begin{equation}
\label{eq:trunc_v_n_subseq_t_strong_conv}
\lim_{j\to+\infty}
\big\|v^{n_{k_j(t)}}(t,\cdot)-v(t,\cdot)\big\|_{L^\infty_{\loc}(\Omega;\,\R^2)}
=0.
\end{equation}
Consequently, for a.e.~$t\in[0,T]$ we must have that
\begin{equation*}
\lim_{j\to+\infty}
\int_\Omega v^{n_{k_j(t)}}(t,\cdot)\,\phi\di x
=
\int_\Omega v(t,\cdot)\,\phi\di x
\end{equation*}
for all $\phi\in C_c(\Omega)$.
Thanks to~\eqref{eq:trunc_v_n_full_limit}, we thus have $v(t,\cdot)=K\omega(t,\cdot)$ for a.e.\ $t\in[0,T]$ and hence, in virtue of \cref{res:holder_bounded} again and the bounds~\eqref{eq:limit_omega_trunc_1} and~\eqref{eq:limit_omega_trunc_p_ul}, we immediately get that
\begin{equation}
\label{eq:v_limit_p_ul_bounded}
\|v\|_{L^\infty([0,T];\,L^\infty(\Omega;\,\R^2))}
\lesssim
C
\end{equation}
and
\begin{equation}
\label{eq:v_limit_p_ul_holder}
|v(t,x)-v(t,y)|
\lesssim
\max\set*{1,\tfrac1{p-2}}
\,
Cp
\,
\d(x,y)^{1-\sfrac 2p},
\quad
\text{for all $x,y\in\Omega$ and a.e.~$t\in [0,T]$,}
\end{equation}
where $C=C(p,T,\|\omega_0\|_{L^1(\Omega)},\|\omega_0\|_{L^p_{\ul}(\Omega)})$ is the constant appearing in~\eqref{eq:trunc_v_n_unif_est_ul_p}, 
proving~\eqref{eq:existence_p_ul_v_bounded} and~\eqref{eq:existence_p_ul_v_holder} respectively. 
In addition, by combining~\eqref{eq:trunc_v_n_full_limit} with~\eqref{eq:trunc_v_n_subseq_t_strong_conv}, we easily see that, in fact,
\begin{equation}
\label{eq:trunc_v_n_strong_conv}
\lim_{k\to+\infty}
\big\|v^{n_k}(t,\cdot)-v(t,\cdot)\big\|_{L^\infty_{\loc}(\Omega;\,\R^2)}
=0
\end{equation}
for a.e.\ $t\in[0,T]$, that is, the subsequence can be chosen independently of time.
Consequently, given any $\phi\in C_c(\Omega)$, from~\eqref{eq:trunc_w_n_L1}, \eqref{eq:trunc_omega_n_weak_converg_omega}, and~\eqref{eq:trunc_v_n_strong_conv}, we immediately get 
\begin{equation*}
\lim_{k\to+\infty}
\int_\Omega \omega^{n_k}(t,\cdot) v^{n_k}(t,\cdot)\,\phi\di x
=
\int_\Omega \omega(t,\cdot) v(t,\cdot)\,\phi\di x
\end{equation*}   
for a.e.\ $t\in[0,T]$.
Therefore, passing to the limit as $k\to+\infty$ along the subsequence $(\omega^{n_k},v^{n_k})_{k\in\N}$ in the weak formulation of~\eqref{eq:Euler}, we conclude that $(\omega,v)$ solves~\eqref{eq:Euler} in the distributional sense and the proof is complete.
\end{proof}

\begin{remark}
Inequality~\eqref{eq:trunc_omega_n_change_radius} and the overall strategy developed in Step~2 of the above proof can be seen as a Lagrangian reformulation  of the Eulerian a priori estimates established in~\cite{T04}*{Lemma~1.4} and in~\cite{CMZ19}*{Proposition~3.1}. 
\end{remark}

We can now conclude this section by proving the second part of \cref{res:main_existence}, which we recall in the next statement. 

\begin{theorem}[Existence in $L^1\cap Y^\Theta_{\ul}$ for any $\Theta$]
\label{res:existence_Theta}
Let Assumptions~\ref{ass:est} and~\ref{ass:diverg_non-tang} be in force.
Then there exists a weak solution $(\omega,v)$ of~\eqref{eq:Euler} in $L^1\cap Y^\Theta_{\ul}$ with initial datum $\omega_0\in L^1(\Omega)\cap Y^\Theta_{\ul}(\Omega)$ such that
\begin{equation}
\label{eq:existence_Theta_ul_L1}
\|\omega\|_{L^\infty([0,T];\,L^1(\Omega))}
\le
\|\omega_0\|_{L^1(\Omega)},
\end{equation}
\begin{equation}
\label{eq:existence_Theta_ul_Theta_ul}
\|\omega\|_{L^\infty([0,T];\,Y^\Theta_{\ul}(\Omega))}
\le
C,
\end{equation} 
\begin{equation}
\label{eq:existence_Theta_ul_v_bounded}
\|v\|_{L^\infty([0,T];\,L^\infty(\Omega;\,\R^2))}
\le
C,
\end{equation}
and
\begin{equation}
\label{eq:existence_Theta_ul_v_almost-lip}
|v(t,x)-v(t,y)|
\le
C
\,
\phi_\Theta(\d(x,y)),
\quad
\text{for all $x,y\in\Omega$ and a.e.~$t\in [0,T]$,}
\end{equation}
for all $T\in(0,+\infty)$, where $C>0$ only depends on $T$, $\|\omega_0\|_{L^1(\Omega)}$ and~$\|\omega_0\|_{Y^\Theta_{\ul}(\Omega)}$.
Moreover, if the growth function~$\Theta$ satisfies~\eqref{eq:Yudovich_condition_Theta}, then $(\omega,v)$ is Lagrangian.
\end{theorem}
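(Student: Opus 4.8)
The plan is to rerun the proof of \cref{res:existence_p_ul} with the \emph{same} approximating sequence $(\omega^n,v^n)_{n\in\N}$ built from the truncations $\omega_0^n=\max\set*{-n,\min\set*{n,\omega_0}}$ of the initial datum, and to upgrade the uniform $L^p_{\ul}$ bound obtained there into a bound on the $Y^\Theta_{\ul}$ norm that is uniform in both $n$ and the exponent $p$. Since $\omega_0\in L^1(\Omega)\cap Y^\Theta_{\ul}(\Omega)$, the truncations satisfy $\|\omega_0^n\|_{L^1(\Omega)}\le\|\omega_0\|_{L^1(\Omega)}$, $\|\omega_0^n\|_{Y^\Theta_{\ul}(\Omega)}\le\|\omega_0\|_{Y^\Theta_{\ul}(\Omega)}$ and $\omega_0^n\to\omega_0$ in $L^1(\Omega)$, so \cref{res:existence_bounded} provides, for every $n$, a Lagrangian weak solution $\omega^n(t,\cdot)=X^n(t,\cdot)_\#\omega_0^n$ with $\|\omega^n\|_{L^\infty([0,T];L^1(\Omega))}\le\|\omega_0\|_{L^1(\Omega)}$.

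The heart of the matter is a uniform bound, independent of $n$ and of the behaviour of $\Theta$ at infinity, on the displacement $R_n(t)=\int_0^t\|v^n(s,\cdot)\|_{L^\infty(\Omega;\R^2)}\di s$. The key point is that for this we only need the fixed, moderate exponent $p=3$. Estimating the velocity by~\eqref{eq:Kw_bounded} of \cref{res:holder_bounded} with $q=1$ and $p=3$ (for which the factor $\max\set*{1,\sfrac1{p-2}}$ equals $1$), and controlling the transported vorticity exactly as in Step~2 of the proof of \cref{res:existence_p_ul} (using $\omega^n(t,\cdot)=X^n(t,\cdot)_\#\omega_0^n$, the bounded divergence~\eqref{eq:null_div_K}, and the change-of-radius inequality~\eqref{eq:Lp_ul_change_radius}), one obtains
\begin{equation*}
R_n'(t)
=
\|v^n(t,\cdot)\|_{L^\infty(\Omega;\,\R^2)}
\lesssim
\|\omega_0\|_{L^1(\Omega)}
+
\e^{\frac{2T}{3}C_3\|\omega_0\|_{L^1(\Omega)}}
\,
(1+R_n(t))^{\sfrac23}
\,
\|\omega_0\|_{L^3_{\ul}(\Omega)}.
\end{equation*}
Since the right-hand side grows \emph{sublinearly} in $R_n(t)$ (the exponent $\sfrac23<1$), this differential inequality cannot blow up in finite time, and a comparison argument yields $R_n(t)\le M$ for all $t\in[0,T]$ and $n\in\N$, with $M=M(T,\|\omega_0\|_{L^1(\Omega)},\|\omega_0\|_{L^3_{\ul}(\Omega)})$ independent of $n$. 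This is precisely where the global $L^1$ integrability of the vorticity and the choice of a fixed exponent conspire to remove any Osgood-type restriction on $\Theta$, in contrast to \cref{res:Taniuchi}.

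Feeding the uniform bound $R_n(t)\le M$ back into the transport estimate, now for \emph{every} $p\in[1,+\infty)$, gives
\begin{equation*}
\frac{\|\omega^n(t,\cdot)\|_{L^p_{\ul}(\Omega)}}{\Theta(p)}
\lesssim
\e^{TC_3\|\omega_0\|_{L^1(\Omega)}}
\,
(1+M)^{\sfrac2p}
\,
\frac{\|\omega_0\|_{L^p_{\ul}(\Omega)}}{\Theta(p)}
\le
\e^{TC_3\|\omega_0\|_{L^1(\Omega)}}
\,
(1+M)^2
\,
\|\omega_0\|_{Y^\Theta_{\ul}(\Omega)},
\end{equation*}
where I have used $\sfrac{T}{p'}\le T$ and $(1+M)^{\sfrac2p}\le(1+M)^2$ for $p\ge1$. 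Taking the supremum over $p\in[1,+\infty)$ produces a bound on $\|\omega^n(t,\cdot)\|_{Y^\Theta_{\ul}(\Omega)}$ uniform in $n$ and $t$, whence \cref{res:almost_Lip} gives the uniform velocity bound $\|v^n(t,\cdot)\|_{L^\infty(\Omega;\R^2)}\lesssim C$ together with the uniform $\phi_\Theta$-equicontinuity $|v^n(t,x)-v^n(t,y)|\lesssim C\,\phi_\Theta(\d(x,y))$. With these uniform estimates in hand, the verification of the hypotheses of \cref{res:aubin-lions} (equi-integrability coming from the uniform $L^3_{\ul}$ bound) and the passage to the limit are carried out as in Steps~3 and~4 of the proof of \cref{res:existence_p_ul}: one extracts a subsequence converging weakly-$\star$ in $L^1$ uniformly in time to some $\omega$, identifies $v=K\omega$, and uses Arzel\`a--Ascoli together with the uniform $\phi_\Theta$-equicontinuity to upgrade the convergence of the velocities to strong local uniform convergence. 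The bounds~\eqref{eq:existence_Theta_ul_L1}--\eqref{eq:existence_Theta_ul_v_almost-lip} then follow by lower semicontinuity of the $L^1$, $Y^\Theta_{\ul}$ and $L^\infty$ norms and by \cref{res:almost_Lip}.

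Finally, assume $\Theta$ satisfies~\eqref{eq:Yudovich_condition_Theta}. Then $\phi_\Theta$ enjoys the Osgood property~\eqref{eq:Osgood}, so the limit velocity $v(t,\cdot)\in C_b^{0,\phi_\Theta}(\Omega;\R^2)$ admits a \emph{unique} flow $X$ solving~\eqref{eq:ode_peano}. Arguing exactly as in Step~4 of the proof of \cref{res:existence_bounded}, the flows $X^{n_k}$ converge locally uniformly to $X$, and passing to the limit in the identity $\omega^{n_k}(t,\cdot)=X^{n_k}(t,\cdot)_\#\omega_0^{n_k}$—using $\omega_0^{n_k}\to\omega_0$ in $L^1(\Omega)$—we conclude that $\omega(t,\cdot)=X(t,\cdot)_\#\omega_0$, i.e.\ the solution is Lagrangian. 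The main obstacle of the whole argument is the second paragraph above: obtaining the uniform-in-$p$ control of the $Y^\Theta_{\ul}$ norm, for which the sublinear differential inequality for the displacement, driven by the single exponent $p=3$, is the decisive device.
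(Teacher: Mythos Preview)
Your proposal is correct and follows essentially the same approach as the paper. The only cosmetic difference is that you keep the sublinear exponent $\sfrac23$ in the differential inequality for $R_n$, whereas the paper simply bounds $(1+R_n(t))^{\sfrac23}\le(1+R_n(t))$ and applies Gr\"onwall; both yield the same uniform-in-$n$ (and $p$-independent) displacement bound, after which the rest of the argument is identical.
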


\begin{proof}
Since $\omega_0\in L^1(\Omega)\cap L^p_{\ul}(\Omega)$ for all $2<p<\infty$, we can apply \cref{res:existence_p_ul}, the only thing we have to check being the behavior of the constant $C$ appearing in~\eqref{eq:existence_p_ul_Lp_ul}, \eqref{eq:existence_p_ul_v_bounded} and~\eqref{eq:existence_p_ul_v_holder} for large values of~$p$.
A quick inspection of the proof of \cref{res:existence_p_ul} immediately shows that it is enough to control the function
\begin{equation*}
p\mapsto C(p,T,\|\omega_0\|_{L^1(\Omega)},\|\omega_0\|_{L^p_{\ul}(\Omega)})
\end{equation*}
appearing in the right-side of~\eqref{eq:magic_constant_p} in Step~2 of the proof of \cref{res:existence_p_ul}.
However, with the same notation of the proof of \cref{res:existence_p_ul}, we can replace~\eqref{eq:trunc_omega_0_control_p_ul} with
\begin{equation*}
\|\omega_0^n\|_{Y^\Theta_{\ul}(\Omega)} \leq \|\omega_0\|_{Y^\Theta_{\ul}(\Omega)}
\quad
\text{for all}\ n\in\N.
\end{equation*}
As a consequence, we can repeat the argument of Step~2 of the proof of \cref{res:existence_p_ul} line by line and replace~\eqref{eq:key_est_Rn} with
\begin{equation*}
R_n'(t)
\lesssim
C
\left(
1
+
\|\omega_0\|_{Y^\Theta_{\ul}(\Omega)}
\,
(1+R_n(t))
\right)
\end{equation*}
for all $t\in(0,T)$, where now
\begin{equation*}
C
=
\max\set*{1,\|\omega_0\|_{L^1(\Omega)}}
\,
\exp\left(TC_3\|\omega_0\|_{L^1(\Omega)}\right),
\end{equation*}
and the first part of the statement readily follows.

If~$\Theta$ satisfies~\eqref{eq:Yudovich_condition_Theta}, then in Step~4 of the proof of \cref{res:existence_p_ul} the sequence $(v^n)_{n\in\N}$ is also uniformly equi-$\phi_\Theta$-continuous (uniformly in time), i.e.\ 
\begin{equation*}
|v^n(t,x)-v^n(t,y)|
\le
C
\,
\phi_\Theta(\d(x,y)),
\quad
\text{for all $x,y\in\Omega$ and a.e.~$t\in [0,T]$,}
\end{equation*} 
for all $n\in\N$, where $C>0$ is as above. 
Since $\phi_\Theta$ satisfies the Osgood condition~\eqref{eq:Osgood}, the sequence $(X^n)_{n\in\N}$ of the (unique) associated flows is locally uniformly equi-bounded and equi-continuous (uniformly in time) and we can argue as in Step~4 of the proof of \cref{res:existence_bounded}.
The proof is complete.
\end{proof}

\section{Uniqueness of weak solutions (\texorpdfstring{\cref{res:main_uniqueness}}{Theorem 1.6})}
\label{sec:uniqueness}

In this section, we prove the uniqueness of Lagrangian weak solutions of the Euler equations~\eqref{eq:Euler} in $L^1\cap Y^\Theta_{\ul}$ under the Osgood condition~\eqref{eq:Osgood} and the concavity property of the modulus of continuity~$\phi_\Theta$ defined in~\eqref{eq:def_phi_Theta}, establishing \cref{res:main_uniqueness}. We recall the result in the next statement.

\begin{theorem}[Lagrangian uniqueness in $L^1\cap Y^\Theta_{\ul}$]
\label{res:uniqueness}
Let Assumptions~\ref{ass:est} and~\ref{ass:diverg_non-tang} be in force.
If the growth function $\Theta$ satisfies~\eqref{eq:Yudovich_condition_Theta} and the function $\phi_\Theta$ defined in~\eqref{eq:def_phi_Theta} is concave on $[0,+\infty)$, then there exists at most one Lagrangian weak solution $(\omega,v)$ of~\eqref{eq:Euler} with vorticity in $L^1\cap Y^\Theta_{\ul}$ starting from a given initial datum $\omega_0\in L^1(\Omega)\cap Y^\Theta_{\ul}(\Omega)$. 
\end{theorem}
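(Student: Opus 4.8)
The plan is to compare directly the Lagrangian flows of two candidate solutions. Suppose $(\omega_1,v_1)$ and $(\omega_2,v_2)$ are two Lagrangian weak solutions as in the statement, both starting from $\omega_0$, with associated flows $X_1,X_2$ solving the ODE~\eqref{eq:ode_peano}. Since both solutions are Lagrangian, $\omega_i(t,\cdot)=X_i(t,\cdot)_\#\omega_0$, so it suffices to prove that $X_1(t,\cdot)=X_2(t,\cdot)$ for $\abs{\omega_0}$-a.e.\ point: this forces $\omega_1=\omega_2$ and hence $v_1=K\omega_1=K\omega_2=v_2$. The natural quantity to monitor is the transport (Wasserstein-type) functional
\[
Q(t)=\int_\Omega \d(X_1(t,x),X_2(t,x))\,\abs{\omega_0(x)}\di x,
\]
which is absolutely continuous with $Q(0)=0$, since the velocities are bounded and $\omega_0\in L^1(\Omega)$.

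Differentiating and using~\eqref{eq:ode_peano}, I would bound $Q'(t)\le\int_\Omega\abs{K\omega_1(t,X_1(t,x))-K\omega_2(t,X_2(t,x))}\,\abs{\omega_0(x)}\di x$ and split the integrand through the intermediate value $K\omega_1(t,X_2(t,x))$ into a term $A(t)$ that keeps the velocity $v_1$ while moving the base point, and a term $B(t)$ that keeps the base point $X_2(t,x)$ while changing the vorticity. For $A$, the strong oscillation estimate~\eqref{eq:Kw_almost_Lip_strong} applied to $\omega_1(t,\cdot)\in L^1(\Omega)\cap Y^\Theta_{\ul}(\Omega)$ at the points $X_1(t,x),X_2(t,x)$ gives the pointwise bound $\lesssim C\,\phi_\Theta(\d(X_1(t,x),X_2(t,x)))$; integrating against the probability measure $\abs{\omega_0}\di x/\|\omega_0\|_{L^1(\Omega)}$ and using Jensen's inequality together with the assumed concavity of $\phi_\Theta$ yields $A(t)\lesssim C\,\|\omega_0\|_{L^1(\Omega)}\,\phi_\Theta(Q(t)/\|\omega_0\|_{L^1(\Omega)})$.

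The term $B$ is the crux. Writing $K(\omega_1-\omega_2)(t,X_2(t,x))$ through the push-forward as $\int_\Omega[k(X_2(t,x),X_1(t,w))-k(X_2(t,x),X_2(t,w))]\,\omega_0(w)\di w$, integrating in $x$ against $\abs{\omega_0}$ and applying Fubini, I would change variables $y=X_2(t,x)$: using that $X_2(t,\cdot)$ is a bijection, the push-forward relation of \cref{def:weak_solution} upgrades to $\int_\Omega g(y)\,\abs{\omega_2(t,y)}\di y=\int_\Omega g(X_2(t,x))\,\abs{\omega_0(x)}\di x$, which converts the outer weight $\abs{\omega_0}\di x$ into $\abs{\omega_2(t,y)}\di y$. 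The inner integral then becomes $\int_\Omega\abs{k(y,X_1(t,w))-k(y,X_2(t,w))}\,\abs{\omega_2(t,y)}\di y$, that is, the estimate~\eqref{eq:Kw_almost_Lip_strong} \emph{in the second kernel variable} applied to $\omega_2(t,\cdot)\in L^1(\Omega)\cap Y^\Theta_{\ul}(\Omega)$ at the points $X_1(t,w),X_2(t,w)$. This second-variable version follows from the very same computation as in \cref{res:holder_bounded}, because the right-hand sides of~\eqref{eq:k_kernel_bound} and~\eqref{eq:k_kernel_oscillation_bound} in \cref{ass:est} are symmetric (in particular it holds for the Biot--Savart kernel). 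I thus obtain $\lesssim C\,\phi_\Theta(\d(X_1(t,w),X_2(t,w)))$, and a second application of Jensen and concavity gives $B(t)\lesssim C\,\|\omega_0\|_{L^1(\Omega)}\,\phi_\Theta(Q(t)/\|\omega_0\|_{L^1(\Omega)})$, symmetrically to~$A$.

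Collecting the two bounds, $Q$ satisfies $Q'(t)\le\beta(Q(t))$ for a.e.\ $t$, with $\beta(r)=2C\,\|\omega_0\|_{L^1(\Omega)}\,\phi_\Theta(r/\|\omega_0\|_{L^1(\Omega)})$ continuous and vanishing at $r=0$. Since $\int_{0^+}\frac{\di r}{\beta(r)}$ is a constant multiple of $\int_{0^+}\frac{\di r}{\phi_\Theta(r)}$, the Osgood condition~\eqref{eq:Osgood} makes this integral diverge, so Osgood's uniqueness lemma applied to the Cauchy datum $Q(0)=0$ forces $Q\equiv0$ on every interval $[0,T]$, hence $X_1=X_2$ holds $\abs{\omega_0}$-a.e.\ and the conclusion follows. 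I expect the two delicate points to be exactly the second-variable oscillation estimate (which I would justify through the symmetric structure of \cref{ass:est}) and the clean change of variables that turns the external $\abs{\omega_0}$-weight into the $Y^\Theta_{\ul}$-controlled density $\omega_2(t,\cdot)$; it is precisely this step that lets both $A$ and $B$ collapse, via concavity, to $\phi_\Theta(Q)$ and thereby recover the sharp Osgood condition rather than a weaker cubed variant.
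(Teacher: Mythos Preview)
Your argument is correct and follows essentially the same Lagrangian/Wasserstein strategy as the paper: split the flow difference through an intermediate point, apply the strong oscillation bound~\eqref{eq:Kw_almost_Lip_strong} (in both kernel variables, via the symmetry of \cref{ass:est}) after a push-forward change of variables, then close with Jensen and Osgood. The only difference is that the paper weights the flow distance by $\mu=(|\omega_0|+\eta)\,\mathscr L^2$ with a strictly positive $\eta\in L^1\cap L^\infty$, which makes $\mu$ have full support and yields $X=\tilde X$ everywhere on $\Omega$; your choice of $|\omega_0|\,\mathscr L^2$ gives $X_1=X_2$ only $|\omega_0|$-a.e., which is still enough to conclude $\omega_1=\omega_2$ and hence $v_1=v_2$.
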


\begin{proof}
Let $(\omega,v)$ and $(\tilde \omega,\tilde v)$ be two Lagrangian weak solutions of~\eqref{eq:Euler} with vorticity in $L^1\cap Y^\Theta_{\ul}$ starting from the same initial datum $\omega_0\in L^1(\Omega)\cap Y^\Theta_{\ul}(\Omega)$ and let $T\in(0,+\infty)$ be fixed.
We write $\omega(t,\cdot)=X(t,\cdot)_{\#}\omega_0$ and $\tilde\omega(t,\cdot)=\tilde X(t,\cdot)_{\#}\omega_0$ for $t\in[0,T]$, where $X$ and $\tilde X$ are the (unique) flows associated to $v$ and $\tilde v$ respectively.
Let $\eta\in L^1(\Omega)\cap L^\infty(\Omega)$ such that $\eta(x)>0$ for all $x\in\Omega$,
let $\bar{\omega}=|\omega_0|+\eta$, note that $\bar{\omega}\in L^1(\Omega)\cap Y^\Theta_{\ul}(\Omega)$, and define the finite measure $\mu=\bar{\omega}\,\mathscr{L}^2\in\mathcal M(\Omega)$.
Now, for $x\in\Omega$, we can estimate
\begin{align*}
\d(X(t,x),\tilde X(t,x))
&\le
\int_0^t|v(s,X(s,x))-\tilde v(s,\tilde X(s,x))|\di s
\\
&\le 
\int_0^t|v(s,X(s,x))-v(s,\tilde X(s,x))|\di s
+
\int_0^t|v(s,\tilde X(s,x))-\tilde v(s,\tilde X(s,x))|\di s
\end{align*}
for all $t\in[0,T]$.
On the one side, by~\eqref{eq:Kw_almost_Lip} in \cref{res:almost_Lip} and by the fact that $(\omega,v)$ is a Lagrangian weak solution of~\eqref{eq:Euler} with vorticity in $L^1\cap Y^\Theta_{\ul}$, we have
\begin{equation*}
|v(s,X(s,x))-v(s,\tilde X(s,x))|
\lesssim
A \, \phi_\Theta(\d(X(s,x),\tilde X(s,x)))
\end{equation*} 
for a.e.\ $s\in[0,T]$, where $A>0$ only depends on $T$, $\|\omega\|_{L^\infty([0,T];L^1(\Omega))}$, and~$\|\omega\|_{L^\infty([0,T];Y^\Theta_{\ul}(\Omega))}$.
On the other side, we have
\begin{align*}
|v(s,\tilde X(s,x))&-\tilde v(s,\tilde X(s,x))|
=
|(K\omega)(s,\tilde X(s,x))-(K\tilde\omega)(s,\tilde X(s,x))|
\\
&=
\left|
\int_\Omega k(\tilde X(s,x),y)\,\omega(s,y)\di y
-
\int_\Omega k(\tilde X(s,x),y)\,\tilde\omega(s,y)\di y
\right|
\\
&=
\left|
\int_\Omega k(\tilde X(s,x),X(s,y))\,\omega_0(y)\di y
-
\int_\Omega k(\tilde X(s,x),\tilde X(s,y))\,\omega_0(y)\di y
\right|
\\
&\le
\int_\Omega |k(\tilde X(s,x),X(s,y))-k(\tilde X(s,x),\tilde X(s,y))|\,|\omega_0(y)|\di y
\end{align*}
for a.e.\ $s\in[0,T]$.
Therefore, we get that
\begin{align*}
\int_\Omega\d(X(t,x) &, \tilde X(t,x))
\di\mu(x) 
\le
\int_0^t\int_\Omega
|v(s,X(s,x))-v(s,\tilde X(s,x))|\di\mu(x) \di s
\\
&\quad+
\int_0^t\int_\Omega|v(s,\tilde X(s,x))-\tilde v(s,\tilde X(s,x))|\di\mu(x) \di s
\\
&\le
A 
\int_0^t\int_\Omega
\phi_\Theta(\d(X(s,x),\tilde X(s,x)))
\di\mu(x) \di s
\\
&\quad+
\int_0^t\int_\Omega
\int_\Omega |k(\tilde X(s,x),X(s,y))-k(\tilde X(s,x),\tilde X(s,y))|\,|\omega_0(y)|\di y
\di\mu(x) \di s
\\
&=
A 
\int_0^t\int_\Omega
\phi_\Theta(\d(X(s,x),\tilde X(s,x)))
\di\mu(x)\di s
\\
&\quad+
\int_0^t\int_\Omega|\omega_0(y)|
\int_\Omega |k(\tilde X_s(x),X_s(y))-k(\tilde X_s(x),\tilde X_s(y))|\di\mu(x)\di y
\di s
\end{align*}
for all $t\in[0,T]$.
Thanks to \eqref{eq:Kw_almost_Lip_strong} in \cref{rem:K_stronger} (applied to the operator~$\tilde K$ defined in~\eqref{eq:simmetrico}), we can thus estimate
\begin{align*}
\int_\Omega |k(\tilde X(s,x)&,X(s,y))-k(\tilde X(s,x) ,\tilde X(s,y))|\di\mu(x)
\\
&=
\int_\Omega |k(x,X(s,y))-k(x,\tilde X(s,y))|\di\,(\tilde X(s,\cdot))_\#\mu(x)
\\
&=
\int_\Omega |k(x,X(s,y))-k(x,\tilde X(s,y))|\,|\bar\omega(s,x)|\di x \\
&\lesssim
\left(
\|\bar\omega(s,\cdot)\|_{L^1(\Omega)}
+
\|\bar\omega(s,\cdot)\|_{Y^\Theta_{\ul}(\Omega)}
\right)
\phi_\Theta(\d(X(s,y),\tilde X(s,y)))
\end{align*}
for a.e.~$s\in[0,T]$ and $y\in\Omega$, where $\bar\omega(s,\cdot)=\tilde X(s,\cdot)_\#\mu$. 
Now, since $\bar\omega=|\omega_0|+\eta$, we can write 
$\bar\omega(s,\cdot)
=
|\tilde\omega(s,\cdot)|
+
\tilde\eta(s,\cdot)
$
for all $s\in[0,T]$, where $\tilde\eta(s,\cdot)=\tilde X(s,\cdot)_\#\eta$. 
Consequently, recalling that $\eta\in L^1(\Omega)\cap L^\infty(\Omega)$ by definition, we can estimate
\begin{align*}
\|\tilde\eta(t,\cdot)\|_{L^\infty(\Omega)}
\le
\exp
\left(
\int_0^t
\|\div\tilde v(s,\cdot)\|_{L^\infty(\Omega)}
\di s
\right)
\|\eta\|_{L^\infty(\Omega)}
\le
\exp
\left(
C_3\|\tilde\omega\|_{L^\infty([0,T];\,L^1(\Omega))}
\right)
\|\eta\|_{L^\infty(\Omega)}
\end{align*}
for all $t\in[0,T]$ according to~\eqref{eq:null_div_K} in \cref{ass:diverg_non-tang}, and thus
\begin{equation*}
\|\bar\omega(s,\cdot)\|_{L^1(\Omega)}
+
\|\bar\omega(s,\cdot)\|_{Y^\Theta_{\ul}(\Omega)}
\le 
B
\end{equation*}
for all $s\in[0,T]$, where $B>0$ only depends on $T$, $\|\eta\|_{L^1(\Omega)}$, $\|\eta\|_{L^\infty(\Omega)}$, $\|\tilde\omega\|_{L^\infty([0,T];\,L^1(\Omega))}$, and~$\|\tilde\omega\|_{L^\infty([0,T];\,Y^\Theta_{\ul}(\Omega))}$.
Therefore, recalling that $|\omega_0|\le\bar\omega$ by construction, we conclude that
\begin{align*}
\int_\Omega 
& 
\, \d(X(t,x),\tilde X(t,x))\di\mu(x) \\
& \le
A\int_0^t\int_\Omega
\phi_\Theta(\d(X(s,x),\tilde X(s,x)))
\di\mu(x)\di s
\\
&\quad+
\int_0^t \int_\Omega|\omega_0(y)|\,
\int_\Omega |k(\tilde X(s,x),X(s,y))-k(\tilde X(s,x),\tilde X(s,y))|\di\mu(x)\di y
\di s
\\
&\le
A
\int_0^t\int_\Omega
\phi_\Theta(\d(X(s,x),\tilde X(s,x)))
\di\mu(x)\di s \\
&\quad+
B
\int_0^t\int_\Omega
\phi_\Theta(\d(X(s,y),\tilde X(s,y)))\,
|\omega_0(y)|
\di y
\di s
\\
&\lesssim
C
\int_0^t\int_\Omega
\phi_\Theta(\d(X(s,x),\tilde X(s,x)))
\di\mu(x)\di s
\end{align*}
for all $t\in[0,T]$, where $C>0$ only depends on $T$, $\|\eta\|_{L^1(\Omega)}$, $\|\eta\|_{L^\infty(\Omega)}$,  
$\|\omega\|_{L^\infty([0,T];\,L^1(\Omega))}$, $\|\omega\|_{L^\infty([0,T];\,Y^\Theta_{\ul}(\Omega))}$,
$\|\tilde \omega\|_{L^\infty([0,T];\,L^1(\Omega))}$, and $\|\tilde\omega\|_{L^\infty([0,T];\,Y^\Theta_{\ul}(\Omega))}$.
Since $\mu(\Omega)<+\infty$ and $\phi_\Theta$ is concave, by Young's inequality we thus get that
\begin{align*}
\aint_\Omega\,\d(X(t,\cdot),\tilde X(t,\cdot))\di\mu
&\lesssim
C
\int_0^t\phi_\Theta\left(
\aint_\Omega
\d(X(s,\cdot),\tilde X(s,\cdot))
\di\mu 
\right)
ds
\end{align*}
for all $t\in[0,T]$.
Hence, since $\phi_\Theta$ satisfies the Osgood condition~\eqref{eq:Osgood}, we conclude that
\begin{equation*}
\aint_\Omega\,\d(X(t,\cdot),\tilde X(t,\cdot))\di\mu=0
\quad
\text{for all}\ t\in[0,T],
\end{equation*}
proving that $X(t,x)=\tilde X(t,x)$ for all $t\in[0,T]$ and all $x\in\Omega$.
So we must have that $\omega(t,\cdot)=\tilde\omega(t,\cdot)$ for all $t\in[0,T]$ and, since $T\in(0,+\infty)$ was arbitrary, the conclusion follows.
\end{proof}

\appendix

\section{An Aubin--Lions-like Lemma}
\label{sec:aubin-lions}

In this section, we prove a simple Aubin--Lions-like Lemma.
This result is needed in \cref{sec:existence} for the construction of the weak solutions of the Euler equations~\eqref{eq:Euler}.
The proof exploits a combination of the Dunford--Pettis Theorem and the Arzel\`a--Ascoli Compactness Theorem together with some standard approximation arguments.

We were not able to find the  result below  in this precise form in the literature, so we prove it here from scratch for the reader's convenience. 
We underline that \cref{res:aubin-lions} just assumes \emph{weak} compactness in space, while one usually deals with \emph{strong} compactness in space.
For a result very similar to \cref{res:aubin-lions}, see~\cite{F04}*{Corollary~2.1} (we thank Stefano Spirito for pointing this reference to us).

\begin{theorem}
\label{res:aubin-lions}
Let $\Omega\subset\R^N$ be an open set and $T\in(0,+\infty)$. 
Let 
$(f^n)_{n\in\N}\subset L^\infty([0,T];L^1(\Omega))$
be a bounded sequence which is equi-integrable in space uniformly in time, that is,
\begin{equation}
\label{eq:AL_bounded}
\sup_{n\in\N}\|f^n\|_{L^\infty([0,T];\,L^1(\Omega))}<+\infty,
\end{equation}
\begin{equation}
\label{eq:AL_eps-delta}
\forall\eps>0\ \exists\delta>0\ 
\colon
A\subset\Omega,\ |A|<\delta
\implies
\sup_{n\in\N}\|f^n\|_{L^\infty([0,T];\,L^1(A))}<\eps
\end{equation}
and
\begin{equation}
\label{eq:AL_eps-Omega_eps}
\forall\eps>0\ \exists\Omega_\eps\subset\Omega\
\text{with}\ |\Omega_\eps|<+\infty\ \colon
\sup_{n\in\N}\|f^n\|_{L^\infty([0,T];\,L^1(\Omega\setminus\Omega_\eps))}<\eps.
\end{equation}
Assume that, for each $\phi\in C^\infty_c(\Omega)$, the functions $F_n[\phi]\colon[0,T]\to\R$, given by
\begin{equation}
\label{eq:AL_def_Fn_phi}
F_n[\phi](t)=\int_\Omega f^n(t,\cdot)\,\phi\di x,
\quad
t\in[0,T],
\end{equation}
are uniformly equi-continuous on $[0,T]$, that is,
\begin{equation}
\label{eq:AL_equi-cont}
\forall\eps>0\
\exists\delta>0\
\colon\
s,t\in[0,T],\ |s-t|<\delta
\implies
\sup_{n\in\N}|F_n[\phi](s)-F_n[\phi](t)|<\eps.
\end{equation}
Then there exist a subsequence $(f^{n_k})_{k\in\N}$ and a function 
\begin{equation}
\label{eq:AL_prop_limit_f}
f\in
L^\infty([0,T];L^1(\Omega))
\cap
C([0,T];L^1(\Omega)-w^\star)
\end{equation}
that is,
\begin{equation*}
t \mapsto \int_{\Omega} f(t,\cdot) \, \phi \di x
\in
C([0,T];\R)
\quad
\text{for every}\
\phi\in L^\infty(\Omega),
\end{equation*}
such that
\begin{equation}
\label{eq:AL_thesis}
\lim_{k\to+\infty}
\sup_{t\in[0,T]}
\left|\,
\int_\Omega f^{n_k}(t,\cdot) \, \phi \di x 
-
\int_\Omega f(t,\cdot) \, \phi \di x
\,\right|
=0
\end{equation}
for all $\phi\in L^\infty(\Omega)$.
\end{theorem}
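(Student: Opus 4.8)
The plan is to combine a diagonal extraction over a countable family of smooth test functions (via Arzel\`a--Ascoli in the time variable) with the Dunford--Pettis characterization of weak compactness in $L^1$ (powered by the equi-integrability hypotheses \eqref{eq:AL_bounded}--\eqref{eq:AL_eps-Omega_eps}), and finally to upgrade pointwise-in-time convergence to uniform-in-time convergence using equi-continuity.

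First I would record the consequence of equi-integrability that drives everything. Hypotheses \eqref{eq:AL_bounded}, \eqref{eq:AL_eps-delta} and \eqref{eq:AL_eps-Omega_eps} are exactly the Dunford--Pettis conditions guaranteeing that the family $\set{f^n(t,\cdot):n\in\N,\ t\in[0,T]}$ is relatively weakly compact in $L^1(\Omega)$. The key technical step is an approximation lemma: for every $\phi\in L^\infty(\Omega)$ and every $\eps>0$ there exists $\psi\in C^\infty_c(\Omega)$ with $\|\psi\|_{L^\infty(\Omega)}\le\|\phi\|_{L^\infty(\Omega)}$ such that $\sup_{n,t}\int_\Omega|f^n(t,\cdot)|\,|\phi-\psi|\di x<\eps$. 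I would prove this by first using \eqref{eq:AL_eps-Omega_eps} to discard a region $\Omega\setminus\Omega_\eps$ carrying uniformly negligible mass, then invoking Lusin's theorem on the finite-measure set $\Omega_\eps$ to produce $\psi$ agreeing with $\phi$ off a set of arbitrarily small measure, and finally controlling the residual integral by \eqref{eq:AL_eps-delta}. As an immediate corollary, splitting $F_n[\phi]=F_n[\phi-\psi]+F_n[\psi]$ and applying \eqref{eq:AL_equi-cont} to the smooth piece $\psi$, the equi-continuity hypothesis extends from $C^\infty_c(\Omega)$ to all of $L^\infty(\Omega)$: the functions $F_n[\phi]$ in \eqref{eq:AL_def_Fn_phi} are uniformly bounded and uniformly equi-continuous on $[0,T]$ for every $\phi\in L^\infty(\Omega)$.

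Next I would fix a countable set $\mathcal D\subset C^\infty_c(\Omega)$ that is dense in $C_0(\Omega)$ for the sup norm. For each $\phi\in\mathcal D$ the family $(F_n[\phi])_n$ is uniformly bounded and equi-continuous, so Arzel\`a--Ascoli yields a uniformly convergent subsequence; a diagonal argument then produces a single subsequence $(f^{n_k})_k$ along which $F_{n_k}[\phi]$ converges on $[0,T]$ for every $\phi\in\mathcal D$. Fixing $t\in[0,T]$, by Dunford--Pettis the sequence $(f^{n_k}(t,\cdot))_k$ is relatively weakly compact in $L^1(\Omega)$, and every weak limit point is determined by its pairings with $\mathcal D$, hence, by density of $\mathcal D$ in $C_0(\Omega)$ and the fact that $C_c(\Omega)$ separates $L^1(\Omega)$, is a unique $L^1$ function $f(t,\cdot)$; therefore the whole sequence converges weakly in $L^1(\Omega)$ to $f(t,\cdot)$. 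This gives $F_{n_k}[\phi](t)\to\int_\Omega f(t,\cdot)\,\phi\di x$ for every $\phi\in L^\infty(\Omega)$, pointwise in $t$, and testing against $\phi=\operatorname{sgn}f(t,\cdot)$ yields $\|f(t,\cdot)\|_{L^1(\Omega)}\le\sup_{n,t}\|f^n(t,\cdot)\|_{L^1(\Omega)}$, so $f\in L^\infty([0,T];L^1(\Omega))$.

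Finally I would upgrade the convergence to be uniform in time and obtain the regularity in \eqref{eq:AL_prop_limit_f}. For a fixed $\phi\in L^\infty(\Omega)$, the sequence $(F_{n_k}[\phi])_k$ is uniformly equi-continuous on $[0,T]$ by Step~1 and converges pointwise by the previous step; since $[0,T]$ is compact, pointwise convergence of an equi-continuous sequence is automatically uniform, which is exactly \eqref{eq:AL_thesis}. Moreover the uniform limit $t\mapsto\int_\Omega f(t,\cdot)\,\phi\di x$ of the continuous functions $F_{n_k}[\phi]$ is itself continuous, for every $\phi\in L^\infty(\Omega)$, giving $f\in C([0,T];L^1(\Omega)-w^\star)$ and completing the proof. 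I expect the main obstacle to be the approximation lemma of Step~1: since $\phi$ is merely bounded measurable it cannot be approximated by continuous functions in the sup norm, so one must genuinely exploit both the tightness \eqref{eq:AL_eps-Omega_eps} and the non-concentration \eqref{eq:AL_eps-delta} built into the equi-integrability, through Lusin's theorem, to make the approximation work uniformly in both $n$ and $t$.
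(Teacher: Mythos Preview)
Your proof is correct and follows a genuinely different extraction strategy from the paper's. Both arguments share the same first step, namely extending the equi-continuity hypothesis \eqref{eq:AL_equi-cont} from $C^\infty_c(\Omega)$ to all of $L^\infty(\Omega)$ via the Lusin-type approximation powered by \eqref{eq:AL_eps-delta}--\eqref{eq:AL_eps-Omega_eps}. After that, the paper diagonalizes over a countable dense set of \emph{times} $\mathcal T\subset[0,T]$: it applies Dunford--Pettis at each $t\in\mathcal T$ to define $f(t,\cdot)$, then extends $f$ to the whole interval by a Cauchy-sequence argument exploiting the weak continuity. You instead diagonalize over a countable dense set of \emph{test functions} $\mathcal D\subset C^\infty_c(\Omega)$: Arzel\`a--Ascoli gives one subsequence along which $F_{n_k}[\phi]$ converges uniformly for every $\phi\in\mathcal D$, and then Dunford--Pettis at each fixed $t$ identifies the weak $L^1$ limit $f(t,\cdot)$, the uniqueness coming from the fact that $\mathcal D$ separates $L^1(\Omega)$. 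Your route is a bit more streamlined, since it bypasses the intermediate ``define on $\mathcal T$, then extend'' step and obtains $f(t,\cdot)$ for all $t$ at once; the paper's route, on the other hand, keeps the role of Dunford--Pettis and Arzel\`a--Ascoli somewhat more cleanly separated. One small point you gloss over: the claim $f\in L^\infty([0,T];L^1(\Omega))$ requires strong measurability of $t\mapsto f(t,\cdot)$, not just the pointwise bound on $\|f(t,\cdot)\|_{L^1}$; the paper handles this by noting that weak measurability plus separability of $L^1(\Omega)$ gives strong measurability via the Pettis theorem, and the same remark applies to your argument.
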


\begin{proof}
We divide the proof in four steps.

\smallskip

\textit{Step~1: equi-continuity testing against  $L^\infty(\Omega)$}.
We claim that~\eqref{eq:AL_equi-cont} actually holds for each $\phi\in L^\infty(\Omega)$.
To prove this statement, we distinguish two cases.

\smallskip

\emph{Case~1}.
Let us prove~\eqref{eq:AL_equi-cont} for any $\phi\in C_c(\Omega)$ at first.
Let $\eps>0$ be fixed.
We can find $\psi\in C^\infty_c(\Omega)$ such that 
$\|\psi-\phi\|_{L^\infty(\Omega)}
<\eps$.
Now let $\delta>0$ be given by~\eqref{eq:AL_equi-cont} when applied to~$\psi$. 
Then, for all $s,t\in[0,T]$ such that $|s-t|<\delta$,
we have
\begin{align*}
|F_n[\phi](s)-F_n[\phi](t)|
&\le
|F_n[\psi](s)-F_n[\psi](t)|
+
2\|\psi-\phi\|_{L^\infty(\Omega)}
\,
\sup_{n\in\N}\|f^n\|_{L^\infty([0,T];L^1(\Omega))}
\\
&<
\eps
\,
\left(1+
2\sup_{n\in\N}\|f^n\|_{L^\infty([0,T];L^1(\Omega))}
\right)
\end{align*}
for all $n\in\N$, proving the validity of~\eqref{eq:AL_equi-cont} for $\phi\in C_c(\Omega)$.

\smallskip

\textit{Case 2}.
Let us prove~\eqref{eq:AL_equi-cont} for any $\phi\in L^\infty(\Omega)$.
Let $\eps>0$ be fixed and
let $\Omega_\eps\subset\Omega$ be the set given by~\eqref{eq:AL_eps-Omega_eps}.
Without loss of generality, we can assume that $\Omega_\eps$ is a non-empty open set.
Let $\delta'>0$ be given by~\eqref{eq:AL_eps-delta}. 
By the Lusin Theorem, we can find $\psi\in C_c(\Omega)$ with $\supp\psi\subset\Omega_\eps$ such that $\|\psi\|_{L^\infty(\Omega)}\le\|\phi\|_{L^\infty(\Omega)}$ and the set
\begin{equation*}
\tilde\Omega_\eps
=
\set*{x\in\Omega_\eps : \phi(x)=\psi(x)}
\subset
\Omega_\eps
\end{equation*} 
satisfies $|\Omega_\eps\setminus\tilde\Omega_\eps|<\delta'$.
Finally, let $\delta>0$ be given by~\eqref{eq:AL_equi-cont} when applied to~$\psi$.
Then, for all $s,t\in[0,T]$ such that $|s-t|<\delta$, we have
\begin{align*}
|F_n[\phi](s)-F_n[\phi](t)|
&\le
|F_n[\psi](s)-F_n[\psi](t)|
+
8\|\phi\|_{L^\infty(\Omega)}\,
\sup_{n\in\N}\|f^n\|_{L^\infty([0,T];L^1(\Omega\setminus\Omega_\eps))}
\\
&\quad+
8\|\phi\|_{L^\infty(\Omega)}\,
\sup_{n\in\N}\|f^n\|_{L^\infty([0,T];L^1(\Omega_\eps\setminus\tilde\Omega_\eps))}
\\
&<
\eps\,
\left(1+16\|\phi\|_{L^\infty(\Omega)}
\,
\sup_{n\in\N}\|f^n\|_{L^\infty([0,T];L^1(\Omega\setminus\Omega_\eps))}
\right)
\end{align*}
for all $n\in\N$, proving the validity of~\eqref{eq:AL_equi-cont} for $\phi\in L^\infty(\Omega)$.

\smallskip 

\textit{Step~2: definition of $f$ on a countable dense set $\mathcal T\subset[0,T]$}. By~\eqref{eq:AL_bounded}, \eqref{eq:AL_eps-delta} and~\eqref{eq:AL_eps-Omega_eps}, we can find a countable dense set ${\mathcal T}\subset [0,T]$ such that, for every given $t \in {\mathcal T}$, the sequence $(f^n(t,\cdot))_{n\in\N}$ is bounded in~$L^1(\Omega)$ and equi-integrable on $\Omega$. 
Therefore, by the Dunford--Pettis Theorem and a diagonal argument, we can find a subsequence $(f^{n_k})_{k \in \N}$ and a function $f(t,\cdot)\in L^1(\Omega)$, for each $t \in {\mathcal T}$, such that
\begin{equation}\label{convT}
\lim_{k\to+\infty}
\int_\Omega f^{n_k}(t,\cdot)\,\phi \di x
=
\int_\Omega f(t,\cdot)\,\phi \di x 
\end{equation}
for all $\phi\in L^\infty(\Omega)$ and $t\in\mathcal T$. 
We emphasize that the function  $f(t,\cdot)\in L^1(\Omega)$ depends on the chosen subsequence (which is fixed from now on) and is defined for $t \in {\mathcal T}$ only. 
Moreover, we have that 
\begin{equation}\label{eq:AL_propag}
(f(t,\cdot))_{t\in\mathcal T}\
\text{is bounded in~$L^1(\Omega)$ and equi-integrable on $\Omega$ uniformly in $t \in \mathcal T$}
\end{equation} 
thanks to the semicontinuity of the $L^1$-norm under weak$^\star$ convergence. 
Now, by Step~1, for each given $\phi \in L^\infty(\Omega)$, the sequence of functions $(F_{n_k}[\phi])_{k\in\N}$
is uniformly equi-continuous on $[0,T]$ and, thanks to~\eqref{convT}, it converges to the function
\begin{equation}\label{tbc}
{\mathcal T} \ni t \mapsto \int_\Omega f(t,\cdot)\,\phi \di x
\end{equation}
for each $t \in {\mathcal T}$.
Therefore, we must have that, for each given $\varphi \in L^\infty(\Omega)$, the function in~\eqref{tbc} is the restriction to ${\mathcal T}$ of a continuous function $F[\phi]\in C([0,T];\R)$.

\smallskip

\textit{Step~3: proof of~\eqref{eq:AL_prop_limit_f}}. 
We now extend the function $\mathcal T\ni t\mapsto f(t,\cdot)\in L^1(\Omega)$ given in Step~2 to a function $f \in C([0,T];L^1(\Omega)-w^\star)$. 
Let $t \in [0,T]\setminus{\mathcal T}$ be given. 
We claim that
\begin{equation}
\label{eq:AL_w-limitclaim}
\lim_{s\to t,\ s\in {\mathcal T}} f(s,\cdot)
\quad
\text{exists in}\ L^1(\Omega)-w^\star.
\end{equation}
In virtue of~\eqref{eq:AL_propag} and the Dunford--Pettis Theorem, we just need to prove that, for any two sequences $(t_m)_{m\in\N}\subset\mathcal T$ and $(\tilde t_m)_{m\in\N}\subset\mathcal T$ such that $
t_m,\tilde t_m\to t$ as $m\to+\infty$, 
\begin{equation*}
f(t_m,\cdot) \to g,
\
f(\tilde t_m,\cdot) \to \tilde g
\ 
\text{in}\ L^1(\Omega)-w^\star\
\text{as}\
m\to+\infty
\implies
g=\tilde g\
\text{in}\ L^1(\Omega).
\end{equation*}
Indeed, if $g\ne \tilde g$ in $L^1(\Omega)$ by contradiction, then we can find $\phi \in L^\infty(\Omega)$ such that
\begin{equation*}
\int_{\Omega} g \,\phi \di x
\not = 
\int_{\Omega} \tilde g \, \phi \di x.
\end{equation*}
However, since $f(t_m,\cdot) \to g$ and $f(\tilde t_m,\cdot) \to \tilde g$
in 
$L^1(\Omega)-w^\star$
as $m\to+\infty$,
this implies that
\begin{equation*}
\lim_{m\to+\infty}
\left|\, 
\int_{\Omega} f(t_m,\cdot) \,\phi \di x
-
\int_{\Omega} f(\tilde t_m,\cdot) \,\phi \di x 
\,\right|
>0,
\end{equation*}
which contradicts the continuity on~$\mathcal T$ of the function in~\eqref{tbc}.
This proves the claimed~\eqref{eq:AL_w-limitclaim} and thus the function $f\in C([0,T];L^1(\Omega)-w^\star)$ is well defined, meaning that, for each $\phi\in L^\infty(\Omega)$, we have
\begin{equation*}
t\mapsto\int_\Omega f(t,\cdot)\,\phi\di x
\in C([0,T];\R).
\end{equation*}
As a consequence, the function $f\colon[0,T]\to L^1(\Omega)$ is weakly measurable and thus, by the Pettis Theorem, is strongly measurable (for precise definitions and statements, see~\cite{L17}*{Chapter~8} and~\cite{RRS16}*{Section~1.9.1}), so that $f\in L^\infty([0,T];L^1(\Omega))$, with
\begin{equation*}
\|f\|_{L^\infty([0,T];\,L^1(\Omega))}
\le
\sup_{n\in\N}
\|f_n\|_{L^\infty([0,T];\,L^1(\Omega))},
\end{equation*}
and the function $f\colon[0,T]\times\Omega\to[-\infty,+\infty]$ is measurable.
This concludes the proof of~\eqref{eq:AL_prop_limit_f}.

\smallskip 

\textit{Step~4: proof of~\eqref{eq:AL_thesis}}.
We now conclude the proof by establishing  the convergence in~\eqref{eq:AL_thesis}. 
Let $\phi\in L^\infty(\Omega)$ and let $\eps>0$ be fixed. 
By Step~1, we can find $\delta>0$ such that
\begin{equation*}
\sup_{k\in\N}
\bigg| 
\int_\Omega f^{n_k}(t,\cdot)\,\phi \di x - \int_\Omega f^{n_k}(s,\cdot)\,\phi \di x
\,\bigg| 
<\frac\eps3
\end{equation*}
for all $s,t\in[0,T]$ such that $|s-t|<\delta$. 
Moreover, since $f\in C([0,T];L^1(\Omega)-w^\star)$ by Step~3, we can choose the above $\delta>0$ in such a way that, in addition,
\begin{equation*}
\bigg| 
\int_\Omega f(s,\cdot)\,\phi \di x 
- 
\int_\Omega f(t,\cdot)\,\phi \di x
\,\bigg| 
<\frac\eps3
\end{equation*} 
for all $s,t\in[0,T]$ such that $|s-t|<\delta$.
Now, since $[0,T]$ is a compact interval, we can find $N\in\N$ and $s_1,\dots,s_N\in\mathcal T$ such that
\begin{equation*}
[0,T]
=
\bigcup_{i=1}^N\set*{t\in[0,T] : |t-s_i|<\delta}.
\end{equation*}
Thanks to~\eqref{convT} in Step~2, for each $i=1,\dots,N$, we can choose $k_i\in\N$ such that 
\begin{equation*}
\bigg| 
\int_\Omega f^{n_k}(s_i,\cdot)\,\phi \di x 
- 
\int_\Omega f(s_i,\cdot)\,\phi \di x 
\,\bigg| 
<\frac\eps3
\end{equation*}
for all $k\ge k_i$.
Hence, let us set
$\bar k=\max\set*{k_i : i=1,\dots, N}$ and note that $\bar k$ depends on $\eps$ (and~$\phi$) only.
Now, given any $t\in[0,T]$, we can find $i\in\set*{1,\dots,N}$ such that $|t-s_i|<\delta$ and  
\begin{align*}
\bigg| 
&
\int_\Omega f^{n_k}(t,\cdot)\,\phi \di x 
- 
\int_\Omega f(t,\cdot)\,\phi \di x \,\bigg|
\le 
\bigg| 
\int_\Omega f^{n_k}(t,\cdot)\,\phi \di x - \int_\Omega f^{n_k}(s_i,\cdot)\,\phi \di x
\,\bigg| 
\\
&+ 
\bigg| 
\int_\Omega f^{n_k}(s_i,\cdot)\,\phi \di x 
- 
\int_\Omega f(s_i,\cdot)\,\phi \di x 
\,\bigg|
+ 
\bigg| 
\int_\Omega f(s_i,\cdot)\,\phi \di x 
- 
\int_\Omega f(t,\cdot)\,\phi \di x
\,\bigg|
<\eps 
\end{align*}
for all $k\ge\bar k$, proving the validity of~\eqref{eq:AL_thesis}. 
The proof is complete. 
\end{proof}


\begin{bibdiv}
\begin{biblist}

\bib{AKLL15}{article}{
   author={Ambrose, David M.},
   author={Kelliher, James P.},
   author={Lopes Filho, Milton C.},
   author={Nussenzveig Lopes, Helena J.},
   title={Serfati solutions to the 2{D} {E}uler equations on exterior domains},
    journal={J. Differential Equations},   
   volume={259},
   date={2015},
   number={9},
   pages={4509--4560},
}

\bib{AB08}{article}{
   author={Ambrosio, Luigi},
   author={Bernard, Patrick},
   title={Uniqueness of signed measures solving the continuity equation for Osgood vector fields},
   journal={Atti Accad. Naz. Lincei Rend. Lincei Mat. Appl.},
   volume={19},
   date={2008},
   number={3},
   pages={237--245},
}

\bib{AC14}{article}{
   author={Ambrosio, Luigi},
   author={Crippa, Gianluca},
   title={Continuity equations and ODE flows with non-smooth velocity},
   journal={Proc. Roy. Soc. Edinburgh Sect. A},
   volume={144},
   date={2014},
   number={6},
   pages={1191--1244},
}

\bib{BCD11}{book}{
   author={Bahouri, Hajer},
   author={Chemin, Jean-Yves},
   author={Danchin, Rapha\"{e}l},
   title={Fourier analysis and nonlinear partial differential equations},
   series={Grundlehren der Mathematischen Wissenschaften [Fundamental
   Principles of Mathematical Sciences]},
   volume={343},
   publisher={Springer, Heidelberg},
   date={2011},
}

\bib{BH15}{article}{
   author={Bernicot, Fr\'{e}d\'{e}ric},
   author={Hmidi, Taoufik},
   title={On the global well-posedness for Euler equations with unbounded vorticity},
   journal={Dyn. Partial Differ. Equ.},
   volume={12},
   date={2015},
   number={2},
   pages={127--155},
}

\bib{BK14}{article}{
   author={Bernicot, Fr\'{e}d\'{e}ric},
   author={Keraani, Sahbi},
   title={On the global well-posedness of the 2D Euler equations for a large class of Yudovich type data},
   journal={Ann. Sci. \'{E}c. Norm. Sup\'{e}r. (4)},
   volume={47},
   date={2014},
   number={3},
   pages={559--576},
}

\bib{BM20}{article}{
   author={Bressan, Alberto},
   author={Murray, Ryan},
   title={On self-similar solutions to the incompressible Euler equations},
   journal={J. Differential Equations},
   volume={269},
   date={2020},
   number={6},
   pages={5142--5203},
}

\bib{BS21}{article}{
   author={Bressan, Alberto},
   author={Shen, Wen},
   title={A posteriori error estimates for self-similar solutions to the
   Euler equations},
   journal={Discrete Contin. Dyn. Syst.},
   volume={41},
   date={2021},
   number={1},
   pages={113--130},
}

\bib{BC21}{article}{
   author={Bruè, Elia},
   author={Colombo, Maria},
   title={Nonuniqueness of solutions to the Euler equations with vorticity in a Lorentz space},
   status={preprint},
   eprint={https://arxiv.org/abs/2108.09469},
date={2021},
}

\bib{CC21}{article}{
   author={Caravenna, Laura},
   author={Crippa, Gianluca},
   title={A directional {L}ipschitz extension lemma, with applications
   to uniqueness and {L}agrangianity for the continuity equation},
   journal={Comm. Partial Differential Equations},
   volume={46},
   date={2021},
   number={8},
   pages={1488--1520},
}

\bib{C95}{article}{
   author={Chemin, Jean-Yves},
   title={Fluides parfaits incompressibles},
   journal={Ast\'{e}risque},
   number={230},
   date={1995},
   pages={177},
}

\bib{CMZ19}{article}{
   author={Chen, Qionglei},
   author={Miao, Changxing},
   author={Zheng, Xiaoxin},
   title={The two-dimensional Euler equation in Yudovich and bmo-type
   spaces},
   journal={Rev. Mat. Iberoam.},
   volume={35},
   date={2019},
   number={1},
   pages={195--240},
}

\bib{CJMO19}{article}{
   author={Clop, Albert},
   author={Jylh\"{a}, Heikki},
   author={Mateu, Joan},
   author={Orobitg, Joan},
   title={Well-posedness for the continuity equation for vector fields with suitable modulus of continuity},
   journal={J. Funct. Anal.},
   volume={276},
   date={2019},
   number={1},
   pages={45--77},
}

\bib{C07}{article}{
   author={Constantin, Peter},
   title={On the Euler equations of incompressible fluids},
   journal={Bull. Amer. Math. Soc. (N.S.)},
   volume={44},
   date={2007},
   number={4},
   pages={603--621},
}

\bib{D91}{article}{
   author={Delort, Jean-Marc},
   title={Existence de nappes de tourbillon en dimension deux},
   journal={J. Amer. Math. Soc.},
   volume={4},
   date={1991},
   number={3},
   pages={553--586},
}

\bib{DM87}{article}{
   author={DiPerna, Ronald J.},
   author={Majda, Andrew J.},
   title={Concentrations in regularizations for $2$-D incompressible flow},
   journal={Comm. Pure Appl. Math.},
   volume={40},
   date={1987},
   number={3},
   pages={301--345},
}

\bib{EM94}{article}{
   author={Evans, L. C.},
   author={M\"{u}ller, S.},
   title={Hardy spaces and the two-dimensional Euler equations with
   nonnegative vorticity},
   journal={J. Amer. Math. Soc.},
   volume={7},
   date={1994},
   number={1},
   pages={199--219},
}

\bib{F04}{book}{
   author={Feireisl, Eduard},
   title={Dynamics of viscous compressible fluids},
   series={Oxford Lecture Series in Mathematics and its Applications},
   volume={26},
   publisher={Oxford University Press, Oxford},
   date={2004},
}

\bib{HZ20}{article}{
   author={Han, Zonglin},
   author={Zlato\v{s}, Andrej},
   title={Euler equations on general planar domains},
   journal={Ann. PDE},
   volume={7},
   date={2021},
   number={2},
   pages={Paper No. 20, 31},
}

\bib{HK08}{article}{
   author={Hmidi, Taoufik},
   author={Keraani, Sahbi},
   title={Incompressible viscous flows in borderline Besov spaces},
   journal={Arch. Ration. Mech. Anal.},
   volume={189},
   date={2008},
   number={2},
   pages={283--300},
}

\bib{ILL20}{article}{
    AUTHOR = {Iftimie, D.},
    author={Lopes Filho, M. C.},
    author={Nussenzveig Lopes, H. J.},
     TITLE = {Weak vorticity formulation of the incompressible 2{D} {E}uler
              equations in bounded domains},
   JOURNAL = {Comm. Partial Differential Equations},
  FJOURNAL = {Communications in Partial Differential Equations},
    VOLUME = {45},
      YEAR = {2020},
    NUMBER = {2},
     PAGES = {109--145},
}

\bib{L15}{article}{
   author={Lacave, Christophe},
   title={Uniqueness for two-dimensional incompressible ideal flow on
   singular domains},
   journal={SIAM J. Math. Anal.},
   volume={47},
   date={2015},
   number={2},
   pages={1615--1664},
}

\bib{LMW14}{article}{
   author={Lacave, Christophe},
   author={Miot, Evelyne},
   author={Wang, Chao},
   title={Uniqueness for the two-dimensional Euler equations on domains with
   corners},
   journal={Indiana Univ. Math. J.},
   volume={63},
   date={2014},
   number={6},
   pages={1725--1756},
}

\bib{LZ19}{article}{
   author={Lacave, Christophe},
   author={Zlato\v{s}, Andrej},
   title={The Euler equations in planar domains with corners},
   journal={Arch. Ration. Mech. Anal.},
   volume={234},
   date={2019},
   number={1},
   pages={57--79},
}

\bib{L17}{book}{
   author={Leoni, Giovanni},
   title={A first course in Sobolev spaces},
   series={Graduate Studies in Mathematics},
   volume={181},
   edition={2},
   publisher={American Mathematical Society, Providence, RI},
   date={2017},
}

\bib{L06}{article}{
    AUTHOR = {Loeper, Gr\'{e}goire},
     TITLE = {Uniqueness of the solution to the {V}lasov-{P}oisson system
              with bounded density},
   JOURNAL = {J. Math. Pures Appl. (9)},
  FJOURNAL = {Journal de Math\'{e}matiques Pures et Appliqu\'{e}es. Neuvi\`eme S\'{e}rie},
    VOLUME = {86},
      YEAR = {2006},
    NUMBER = {1},
     PAGES = {68--79},
}

\bib{M93}{article}{
   author={Majda, Andrew J.},
   title={Remarks on weak solutions for vortex sheets with a distinguished sign},
   journal={Indiana Univ. Math. J.},
   volume={42},
   date={1993},
   number={3},
   pages={921--939},
}

\bib{MB02}{book}{
   author={Majda, Andrew J.},
   author={Bertozzi, Andrea L.},
   title={Vorticity and incompressible flow},
   series={Cambridge Texts in Applied Mathematics},
   volume={27},
   publisher={Cambridge University Press, Cambridge},
   date={2002},
   pages={xii+545},
}

\bib{MP84}{book}{
   author={Marchioro, Carlo},
   author={Pulvirenti, Mario},
   title={Vortex methods in two-dimensional fluid dynamics},
   series={Lecture Notes in Physics},
   volume={203},
   publisher={Springer-Verlag, Berlin},
   date={1984},
   pages={i+137},
}

\bib{MP94}{book}{
   author={Marchioro, Carlo},
   author={Pulvirenti, Mario},
   title={Mathematical theory of incompressible nonviscous fluids},
   series={Applied Mathematical Sciences},
   volume={96},
   publisher={Springer-Verlag, New York},
   date={1994},
}

\bib{M16}{article}{
   author={Miot, Evelyne},
   title={A uniqueness criterion for unbounded solutions to the
   Vlasov-Poisson system},
   journal={Comm. Math. Phys.},
   volume={346},
   date={2016},
   number={2},
   pages={469--482},
}

\bib{RRS16}{book}{
   author={Robinson, James C.},
   author={Rodrigo, Jos\'{e} L.},
   author={Sadowski, Witold},
   title={The three-dimensional Navier-Stokes equations},
   series={Cambridge Studies in Advanced Mathematics},
   volume={157},
   note={Classical theory},
   publisher={Cambridge University Press, Cambridge},
   date={2016},
}

\bib{S95A}{article}{
   author={Serfati, Philippe},
   title={Solutions {$C^\infty$} en temps, {$n$}-{$\log$} {L}ipschitz
              born\'{e}es en espace et \'{e}quation d'{E}uler},
   journal={C. R. Acad. Sci. Paris S\'{e}r. I Math.},
   volume={320},
   date={1995},
   number={5},
   pages={555--558},
}

\bib{S95B}{article}{
   author={Serfati, Philippe},
   title={Structures holomorphes \`a faible r\'{e}gularit\'{e} spatiale en
              m\'{e}canique des fluides},
   journal={J. Math. Pures Appl. (9)},
   volume={74},
   date={1995},
   number={2},
   pages={95--104},
}

\bib{T04}{article}{
   author={Taniuchi, Yasushi},
   title={Uniformly local $L^p$ estimate for 2-D vorticity equation and its
   application to Euler equations with initial vorticity in ${\bf bmo}$},
   journal={Comm. Math. Phys.},
   volume={248},
   date={2004},
   number={1},
   pages={169--186},
}

\bib{TTY10}{article}{
   author={Taniuchi, Yasushi},
   author={Tashiro, Tomoya},
   author={Yoneda, Tsuyoshi},
   title={On the two-dimensional Euler equations with spatially almost
   periodic initial data},
   journal={J. Math. Fluid Mech.},
   volume={12},
   date={2010},
   number={4},
   pages={594--612},
}

\bib{VW93}{article}{
   author={Vecchi, Italo},
   author={Wu, Si Jue},
   title={On $L^1$-vorticity for $2$-D incompressible flow},
   journal={Manuscripta Math.},
   volume={78},
   date={1993},
   number={4},
   pages={403--412},
}

\bib{V99}{article}{
   author={Vishik, Misha},
   title={Incompressible flows of an ideal fluid with vorticity in borderline spaces of Besov type},
   journal={Ann. Sci. \'{E}cole Norm. Sup. (4)},
   volume={32},
   date={1999},
   number={6},
   pages={769--812},
}

\bib{V18-I}{article}{
   author={Vishik, Misha},
   title={Instability and non-uniqueness in the Cauchy problem for the Euler equations of an ideal incompressible fluid. Part I},
   date={2018},
   status={preprint},
   eprint={https://arxiv.org/pdf/1805.09426.pdf},
}

\bib{V18-II}{article}{
   author={Vishik, Misha},
   title={Instability and non-uniqueness in the Cauchy problem for the Euler equations of an ideal incompressible fluid. Part II},
   date={2018},
   status={preprint},
   eprint={https://arxiv.org/pdf/1805.09440.pdf},
}

\bib{Y63}{article}{
   author={Yudovich, V. I.},
   title={Non-stationary flows of an ideal incompressible fluid},
   journal={\v{Z}. Vy\v{c}isl. Mat i Mat. Fiz.},
   volume={3},
   date={1963},
   pages={1032--1066},
   issn={0044-4669},
}

\bib{Y95}{article}{
   author={Yudovich, V. I.},
   title={Uniqueness theorem for the basic nonstationary problem in the dynamics of an ideal incompressible fluid},
   journal={Math. Res. Lett.},
   volume={2},
   date={1995},
   number={1},
   pages={27--38},
}

\end{biblist}
\end{bibdiv}

\end{document}